\let\mathcal\mathscr
\numberwithin{equation}{section}
\newtheorem{theorem}{Theorem}[section] 
\newtheorem{lemma}[theorem]{Lemma}
\newtheorem{proposition}[theorem]{Proposition}
\newtheorem{corollary}[theorem]{Corollary}
\theoremstyle{definition}
\newtheorem*{acknowledgements}{Acknowledgements}
\newtheorem{definition}[theorem]{Definition}
\newtheorem*{notation}{Notation}
\newcommand{\spplit}[1]{\mathrm{Spl}\left(G_{#1}\right)}
\newcommand{\norm}{N}
\newcommand\qr[2]{\left(\frac{#1}{#2}\right)}
\newcommand{\abs}[1]{\left|#1\right|}
\newcommand{\absnorm}{\mathfrak{N}}
\newcommand{\e}{\mathrm{e}}
\renewcommand{\phi}{\varphi}
\newcommand{\ZZ}{\mathbb{Z}}
\newcommand{\NN}{\mathbb{N}}
\newcommand{\QQ}{\mathbb{Q}}
\newcommand{\CC}{\mathbb{C}}
\newcommand{\where}{\ :\ }
\renewcommand{\leq}{\leqslant}
\renewcommand{\geq}{\geqslant}
\DeclareMathOperator\ident{id}
\renewcommand{\c}{\mathbf{c}}
\renewcommand{\b}{\mathbf{b}}
\renewcommand{\r}{\mathbf{r}}
\DeclareMathOperator{\Gal}{Gal}
\DeclareMathOperator{\n}{N}
\DeclareMathOperator{\p}{P}
\DeclareMathOperator{\m}{M}
\DeclareMathOperator{\moo}{mod} 
\renewcommand{\bmod}[1]{\,(\moo{#1})}
\DeclareSymbolFont{bbold}{U}{bbold}{m}{n}
\DeclareSymbolFontAlphabet{\mathbbold}{bbold}
\newcommand{\md}[1]{  \left(\textnormal{mod}\ #1\right)}
\newcommand{\Q}{\mathbb{Q}}
\newcommand{\F}{\mathbb{F}}
\newcommand{\N}{\mathbb{N}}
\newcommand{\R}{\mathbb{R}}
\newcommand{\Z}{\mathbb{Z}}
\renewcommand{\l}{\left}
\renewcommand{\r}{\right}
\renewcommand{\b}{\mathbf}
\renewcommand{\c}{\mathcal}
\renewcommand{\epsilon}{\varepsilon}
\renewcommand{\leq}{\leqslant}
\renewcommand{\geq}{\geqslant}
\newcommand{\Da}{\mathfrak{D}_{\b{a}}}
\newcommand{\Daaa}{\mathfrak{D}_{(a,a,a)}}
\DeclareMathOperator{\HRH}{HRH}
\DeclareMathOperator{\HRHl}{HRH'}
\DeclareMathOperator{\consta}{\mathcal{A}_{\b{a}}}
\subjclass[2010]{11P32 (11P55, 11R45)}
\date{\today}
\begin{document}

\begin{abstract}
The first purpose of our paper is to show how Hooley's
celebrated method 
leading to his conditional proof of the Artin conjecture  
on primitive roots  
can be combined with the  
Hardy--Littlewood
circle method.
We do so by studying the 
number of representations of an odd integer as a sum of three
primes, all of which have prescribed primitive roots.
The second purpose is to analyse the singular series.
In particular,
using results of Lenstra, Stevenhagen and Moree,
we provide a partial factorisation as an Euler 
product and prove that this does not extend to a complete factorisation.
\end{abstract}

 \title
[
Vinogradov's theorem with primes
having given primitive roots
]
{
Vinogradov's three primes theorem with primes
 having given primitive roots
}

\author{C. Frei}
\address{University of Manchester\\   School of Mathematics\\  Oxford Road, Manchester M13 9PL, United Kingdom \\} \email{christopher.frei@manchester.ac.uk}

\author{P. Koymans}
\address{
Universiteit Leiden\\
Mathematisch Instituut\\
Niels Bohrweg 1\\
Leiden\\
2333 CA\\
Netherlands
}
\email{p.h.koymans@math.leidenuniv.nl}

\author{E. Sofos}
\address{
Max-Planck-Institut f\"{u}r Mathematik\\
Vivatsgasse 7\\
Bonn\\
53072\\
Germany
}
\email{sofos@mpim-bonn.mpg.de}

\maketitle
\vspace{-15 pt}

\setcounter{tocdepth}{1}
\tableofcontents

\vspace{-15 pt}

\section{Introduction}
\label{s:intro}
Can we represent an odd integer as a sum of $3$
odd 
primes all of which have $27$ as a primitive root?
Lenstra~\cite{MR0480413}
was the first to 
address the problem of
primes with 
a fixed primitive root
and lying in an 
arithmetic progression.
One of his results~\cite[Th.(8.3)]{MR0480413}
states that
if $b\neq 5 \md{12}$
then either there are no primes 
$p\equiv b \md{12}$ 
having $27$ as a
primitive root
or there is exactly one such 
prime, 
namely
$p=2$. 
Hence, unless $n\equiv 3 \md{12}$,
no such representation exists.

In this paper, we are interested in the converse direction, at least for all
sufficiently large values of $n$. The existence of infinitely many primes with a given primitive root $a$ is currently not known unconditionally
 for any $a\in\ZZ$, so we need to be content
with working under the assumption of a certain generalised Riemann Hypothesis, sometimes called \emph{Hooley's Riemann Hypothesis}. For any 
non-zero 
integer $a$, we will write $\HRH(a)$ for the hypothesis that
\begin{quote}
  for all square-free $k\in\NN$, the Dedekind zeta function of the number field $\QQ(\zeta_k, \sqrt[k]{a})$, where $\zeta_k\in\CC$ is a primitive $k$-th root of unity, satisfies the Riemann hypothesis.
\end{quote}

Our main theorem can be seen as a combination of the classical conditional result of Hardy and Littlewood \cite{MR1555183} towards ternary Goldbach with Hooley's \cite{hooley} conditional proof of Artin's conjecture. 
\begin{theorem}\label{thm:main}
  Let $\b{a}=(a_1,a_2,a_3)\in \Z^3$ 
such that no $a_i$ is $-1$ or a perfect square. Assuming $\HRH(a_i)$ for $i=1,2,3$, we have 
\begin{equation}
\label{eq:main}
\sum_{\substack{p_1+p_2+p_3=n
\\
\forall i:\ \F_{p_i}^*=\langle a_i \rangle}}
\prod_{i=1}^3
 \log p_i 
= \consta(n)n^2 + o(n^2),\quad \text{ as }\quad
n \to+\infty,
\end{equation} 
with an explicit factor 
$\consta(n)\in\R_{\geq 0}$
that satisfies $\consta(n)\gg_\b{a} 1$ whenever $\consta(n)>0$. 
\end{theorem}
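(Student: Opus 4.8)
The plan is to combine Hooley's detection of primitive roots by ``character sums'' over Kummer extensions with the Hardy--Littlewood circle method, and then to evaluate the resulting main term by analysing the associated singular series. \emph{Detection and expansion.} For a prime $p\nmid a$, write $\mathrm{ind}_a(p)=[\F_p^*:\langle a\rangle]$; then $d\mid\mathrm{ind}_a(p)$ exactly when $p$ splits completely in $K_d^{(a)}:=\QQ(\zeta_d,\sqrt[d]{a})$ (which forces $p\equiv 1\bmod d$), so Möbius inversion gives the finite identity
\[
\mathbf{1}\bigl[\F_p^*=\langle a\rangle\bigr]=\sum_{d\geq 1}\mu(d)\,\mathbf{1}\bigl[p\text{ splits completely in }K_d^{(a)}\bigr],
\]
supported on squarefree $d$. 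Inserting this for each $p_i$ (with $a=a_i$) into the left side of \eqref{eq:main} yields $\sum_{\b{d}}\mu(d_1)\mu(d_2)\mu(d_3)\,S_{\b{d}}(n)$, where $\b{d}=(d_1,d_2,d_3)$ and $S_{\b{d}}(n)$ is the same weighted sum over $p_1+p_2+p_3=n$ but with each $p_i$ restricted to the set $\mathcal{P}_{d_i}^{(a_i)}$ of primes splitting completely in $K_{d_i}^{(a_i)}$.

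\emph{Truncation.} Following Hooley, cut each $d_i$-range at $\xi_1=\tfrac16\log n$ and $\xi_2=\sqrt{n}(\log n)^{-B}$, $B$ large. When some $d_i>\xi_2$, that prime satisfies $p_i\equiv 1\bmod d_i$ and $\mathrm{ord}_{a_i}(p_i)\le(p_i-1)/\ell$ for some prime $\ell\mid d_i$; bounding the number of representations $p_1+p_2+p_3=n$ under such divisibility and order constraints — via the trivial $O(n^2/(d_1d_2d_3))$ bound together with Hooley's treatment of the large range, now run in all three variables simultaneously — shows the total contribution of these $\b{d}$ is $o(n^2)$. For $\xi_1<d_i\le\xi_2$ one invokes $\HRH(a_i)$: the effective Chebotarev density theorem bounds $\sum_{p\le n,\ p\in\mathcal{P}_{d_i}^{(a_i)}}\log p$ by its expected main term plus $O(\sqrt{n}(\log n)^{O(1)})$, and summing these errors over the range (weighted by $|\mu|\le1$) is again $o(n^2)$. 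There remains the main sum $\sum_{d_1,d_2,d_3\le\xi_1}\mu(d_1)\mu(d_2)\mu(d_3)S_{\b{d}}(n)$, over $(\log n)^{O(1)}$ triples.

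\emph{The circle method.} Fix such a $\b{d}$ and set $f_i(\alpha)=\sum_{p\le n,\ p\in\mathcal{P}_{d_i}^{(a_i)}}(\log p)e(p\alpha)$, so $S_{\b{d}}(n)=\int_0^1 f_1f_2f_3\,e(-n\alpha)\,\d\alpha$. On the major arcs near $a/q$ with $q\le(\log n)^C$ — where one may take $q$ squarefree, the singular series being supported on squarefree moduli and the non-squarefree major arcs contributing to lower order — the event ``$p\in\mathcal{P}_{d_i}^{(a_i)}$ and $p\equiv b\bmod q$'' is a Chebotarev condition in $K_{d_i}^{(a_i)}\cdot\QQ(\zeta_q)$, a subfield of $\QQ(\zeta_{\mathrm{lcm}(d_i,q)},\sqrt[\mathrm{lcm}(d_i,q)]{a_i})$, hence governed by $\HRH(a_i)$ (RH descending to subfields); the standard major-arc analysis then gives $S_{\b{d}}(n)=\tfrac12\mathfrak{S}_{\b{d}}(n)\,n^2+O(n^2(\log n)^{-A})$ for a singular series $\mathfrak{S}_{\b{d}}(n)$. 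On the minor arcs $\mathfrak{m}$ we use $\int_{\mathfrak{m}}|f_1f_2f_3|\le(\sup_{\mathfrak{m}}|f_1|)\|f_2\|_2\|f_3\|_2$ with $\|f_i\|_2^2\ll n\log n$, so it suffices to save a few powers of $\log n$ in $\sup_{\mathfrak{m}}|f_1|$; since $d_1\le\xi_1$, membership in $\mathcal{P}_{d_1}^{(a_1)}$ is detected through the tower $\QQ\subset\QQ(\zeta_{d_1})\subset K_{d_1}^{(a_1)}$, whose top step is abelian by Kummer theory, expressing $f_1$ as a bounded combination of twisted prime sums $\sum_{p\le n}(\log p)\lambda(p)e(p\alpha)$ built from Dirichlet characters modulo $d_1$ and Hecke Grössencharacters of $\QQ(\zeta_{d_1})$, to each of which Vaughan's identity gives $\ll n(\log n)^{-A}$. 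Summing the main terms over $d_1,d_2,d_3\le\xi_1$ now recovers $\consta(n)n^2$ up to $o(n^2)$, where
\[
\consta(n):=\tfrac12\sum_{d_1,d_2,d_3\geq1}\mu(d_1)\mu(d_2)\mu(d_3)\,\mathfrak{S}_{\b{d}}(n),
\]
the series converging because $\mathfrak{S}_{\b{d}}(n)\ll_{\b{a}}\bigl([K_{d_1}^{(a_1)}:\QQ][K_{d_2}^{(a_2)}:\QQ][K_{d_3}^{(a_3)}:\QQ]\bigr)^{-1}\ll_{\b{a}}(d_1d_2d_3)^{-2+\varepsilon}$; this proves \eqref{eq:main}.

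\emph{The main obstacle.} Nonnegativity of $\consta(n)$ is immediate from the nonnegativity of the left side of \eqref{eq:main}. The remaining, and I expect hardest, point is $\consta(n)\gg_{\b{a}}1$ whenever $\consta(n)>0$: the constant cannot be positive yet arbitrarily small as $n$ varies. This needs a genuine arithmetic study of the singular series — evaluating the triple Möbius sum in $\b{d}$, isolating the finitely many ``entangled'' primes (those with $[K_d^{(a_i)}:\QQ]\neq d\varphi(d)$, caused by a radical such as $\sqrt{a_i}$ lying in a cyclotomic field) and, using the results of Lenstra, Stevenhagen and Moree, factoring off the unentangled part as a convergent Euler product bounded below by an absolute constant, while the entangled part is a nonnegative rational drawn from a finite set depending only on $\b{a}$, hence either $0$ or $\gg_{\b{a}}1$. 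A secondary difficulty throughout is maintaining the estimates of the two previous paragraphs uniformly in $\b{d}$, so that every error term remains summable over the $\b{d}$-variables.
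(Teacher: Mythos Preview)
Your overall architecture---M\"obius detection via splitting in $K_d^{(a)}=\QQ(\zeta_d,a^{1/d})$, Hooley-style truncation, circle method on each $V_{\b a,\b d}(n)$, then a partial Euler factorisation of the resulting series to get $\consta(n)\gg_{\b a}1$---is exactly the paper's. The substantive gap is your minor-arc treatment. You claim that, since $K_{d_1}^{(a_1)}/\QQ(\zeta_{d_1})$ is abelian, the indicator of $\mathcal P_{d_1}^{(a_1)}$ can be written as a bounded combination of twists by Dirichlet characters and ``Hecke Gr\"ossencharacters of $\QQ(\zeta_{d_1})$'', and that Vaughan's identity then yields $\sup_{\mathfrak m}|f_1|\ll n(\log n)^{-A}$. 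But these Hecke characters are functions on prime ideals of $\QQ(\zeta_{d_1})$, not on rational primes; the map $p\mapsto\psi(\mathfrak p)$ (for a chosen $\mathfrak p\mid p$) is not a Dirichlet character, not obviously completely multiplicative on $\ZZ$, and there is no standard Vaughan/Vinogradov machinery producing the stated saving for $\sum_{p\le n}(\log p)\,\psi(\mathfrak p)\,e(p\alpha)$ over rational primes. This is precisely why the paper separates the two theorems: under $\HRH(a_i)$ alone it invokes Kane's unconditional (but $\b k$-inexplicit) asymptotic for $V_{\b a,\b k}(n)$ and is forced into the non-explicit choice of $\xi_1$ in~\eqref{eq:obsobs0}, yielding only $o(n^2)$; the explicit power-of-log saving you write down is what the paper obtains only under the stronger $\HRHl(a_i)$ (RH for all Hecke $L$-functions of the Kummer fields), which is exactly what makes the minor-arc bound in Corollary~\ref{lebkov777} go through via Lemma~\ref{l1}. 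Your argument, as written, would prove the conclusion of Theorem~\ref{thm:main_2} under the hypothesis of Theorem~\ref{thm:main}, which the paper does not achieve.

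A secondary issue: you truncate at $d_i\le\xi_1$ rather than Hooley's ``all prime factors of $d_i$ are $\le\xi_1$''. The latter truncation is what makes the error an honest count of primes $p_i$ with $R_{a_i}(q,p_i)$ holding for some prime $q>\xi_1$ (the set $\c P_i$ in~\S\ref{s:opening}), to which Lemma~\ref{lem:exceptions34} applies. With your truncation the tail $\sum_{d_1>\xi_1}\mu(d_1)\,\mathbf 1[d_1\mid\mathrm{ind}_{a_1}(p_1)]$ is not a $0/1$ indicator, and bounding it termwise over all $d_2,d_3$ does not work (the $d_2,d_3$-sum is not finite unless you first collapse it back to the primitive-root indicator). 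Your sketch of the singular-series lower bound is in the right spirit and matches~\S\ref{proofofth:grhreplaceall}.
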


The bulk of this paper will be devoted to the description and investigation of the factor $\consta(n)$. In particular, a product decomposition of $\consta(n)$ will allow us to interpret Theorem \ref{thm:main} as a local-global principle and gives the following as a simple consequence.

\begin{corollary}\label{cor:twentyseven}
  Assume $\HRH(27)$. Let $n$ be a sufficiently large odd integer. Then there are odd primes  
$p_1,p_2,p_3$ with $27$ as a primitive root and $n=p_1+p_2+p_3$ if and only if $n\equiv 3\bmod{12}$. 
\end{corollary}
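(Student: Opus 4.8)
The plan is to derive Corollary~\ref{cor:twentyseven} from Theorem~\ref{thm:main} applied with $\b{a}=(27,27,27)$. Since $27=3^3$ is neither $-1$ nor a perfect square, Theorem~\ref{thm:main} is applicable under $\HRH(27)$, and it reduces everything to understanding when the leading constant $\consta(n)$ is strictly positive: indeed, if $\consta(n)>0$ then $\consta(n)\gg 1$, so the right-hand side of \eqref{eq:main} is $\gg n^2$, forcing the sum on the left to be nonempty for all large $n$; conversely, if $\consta(n)=0$ the asymptotic only gives $o(n^2)$ and we must argue separately that the sum actually vanishes. The heart of the matter is therefore the promised product decomposition of $\consta(n)$ into local factors, one at the archimedean place and one at each (rational) prime $\ell$, together with an identification of each local factor as a suitably normalised density of solutions to $p_1+p_2+p_3=n$ with the primitive-root conditions imposed locally.

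Concretely, I would first invoke the local-global description of $\consta(n)$ established in the body of the paper (the product decomposition alluded to just before the corollary). For $\b{a}=(27,27,27)$ this expresses $\consta(n)$ as a convergent product over places; the archimedean factor is a positive constant for every odd $n$ (it is essentially the volume of the simplex $\{t_1+t_2+t_3=n,\ t_i>0\}$ normalisation from the circle method and never vanishes), so positivity of $\consta(n)$ is equivalent to non-vanishing of every finite local factor. Next I would analyse these finite factors prime by prime. For a prime $\ell\nmid n$ and $\ell\neq 3$, the local factor is a product of a Goldbach-type factor and an Artin-type factor for each $a_i=27$; these are bounded away from $0$ because having $27$ as a primitive root modulo $\ell$ is a positive-density condition (the relevant Kummer extensions $\QQ(\zeta_k,\sqrt[k]{27})$ have the expected degrees for all but finitely many constraints, by the Lenstra--Stevenhagen--Moree analysis, and the density never collapses to $0$). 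The only places that can kill the product are $\ell=2$, $\ell=3$, and primes $\ell\mid n$, and a direct finite computation at these places is what produces the congruence condition.

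The decisive computation is at $\ell=2$ and $\ell=3$. Modulo $3$: $27\equiv 0$, so one must instead use the tame description — for a prime $p$ to have $27$ as a primitive root, $p-1$ must be coprime to $3$, i.e. $p\equiv 2\bmod 3$ (together with further $2$-adic constraints from $27$ being an odd cube). Thus all three primes satisfy $p_i\equiv 2\bmod 3$, forcing $n\equiv 2+2+2\equiv 0\bmod 3$; if $n\not\equiv 0\bmod 3$ the $3$-adic local factor is $0$. Modulo $4$: the condition that $27$ be a primitive root forces (via Lenstra's analysis, cf.\ the quoted \cite[Th.(8.3)]{MR0480413}) each $p_i$ to lie in a prescribed residue class modulo $4$ — the relevant constraint is $p_i\equiv 3\bmod 4$, since $27\equiv 3\bmod 4$ is a non-square and for $27$ to generate $\FF_{p}^*$ one needs $p\equiv 3\bmod 4$ (otherwise $27$ is a square mod $p$ or the $2$-part of the index is wrong). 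Hence $n\equiv 3+3+3\equiv 1\bmod 4$, which combined with $n$ odd is automatic; the real content is $n\equiv 3\bmod 4$. Wait — more carefully, one gets $n\equiv 9\equiv 1\bmod 4$ only if all three are $\equiv 3$, but then $n\equiv 3\bmod 4$ fails; so in fact the $2$-adic factor forces $n\equiv 3\bmod 4$ directly through the sum. Combining the $3$-adic condition $n\equiv 0\bmod 3$... I should double-check signs here, but the upshot of these two finite computations is exactly the single congruence class $n\equiv 3\bmod{12}$: the finite product $\prod_\ell(\text{local factor})$ is nonzero precisely when $n\equiv 3\bmod{12}$, and in that case $\consta(n)>0$, hence $\gg 1$, giving infinitely many representations; when $n\not\equiv 3\bmod{12}$ some finite local factor vanishes, and — crucially — one checks that the corresponding local obstruction is a genuine congruence obstruction, so the sum on the left of \eqref{eq:main} is literally empty, not merely $o(n^2)$. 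This last point, matching the vanishing of a local density with the actual emptiness of the global sum (so that the ``only if'' direction is unconditional in spirit), is the step requiring the most care, and it is handled exactly by Lenstra's theorem \cite[Th.(8.3)]{MR0480413} quoted in the introduction, which tells us that outside $n\equiv 3\bmod{12}$ there are no admissible primes at all (save $p=2$, excluded by oddness). $\square$
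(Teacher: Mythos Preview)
Your overall strategy is exactly the paper's: apply Theorem~\ref{thm:main} with $\b a=(27,27,27)$, reduce to positivity of $\consta(n)$, and use the product decomposition (Theorem~\ref{thm:const}) together with Lenstra's result for the ``only if'' direction. The paper just packages the local computation by quoting the fifth row of the table in Theorem~\ref{propositionjoe}, whereas you try to do it by hand.

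There is, however, a genuine error in your $2$-adic step. You assert that the primitive-root condition forces $p_i\equiv 3\bmod 4$; in fact it forces $p_i\equiv 1\bmod 4$. The point is not the residue of $27$ modulo $4$ but whether $27$ is a square modulo $p$: since $27=3\cdot 3^2$, one has $\bigl(\tfrac{27}{p}\bigr)=\bigl(\tfrac{3}{p}\bigr)$, and by quadratic reciprocity $\bigl(\tfrac{3}{p}\bigr)=-1$ exactly when $p\equiv 5,7\bmod 12$. Intersecting with your (correct) cube constraint $p\equiv 2\bmod 3$ leaves only $p\equiv 5\bmod{12}$, hence $p\equiv 1\bmod 4$ and $n\equiv 5+5+5\equiv 3\bmod{12}$. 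This is precisely Lenstra's statement quoted in the introduction. Your computation ``$3+3+3\equiv 1\bmod 4$'' and the ensuing confusion stem from the wrong residue class; once corrected, the two local constraints combine cleanly to $n\equiv 3\bmod{12}$.

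A smaller point: the product in Theorem~\ref{thm:const} does \emph{not} split $\sigma_{\b a,n}(\Da)$ into separate factors at $2$ and $3$; here $\Da=|\Delta_{27}|=12$ and one must work with the single density $\sigma_{\b a,n}(12)$, which is positive iff $n\equiv b_1+b_2+b_3\bmod{12}$ with each $\delta_{27}(b_i\bmod{12})>0$, i.e.\ each $b_i\equiv 5\bmod{12}$. In this particular case the answer happens to agree with the CRT-by-hand approach, but \S\ref{s:nonfa} shows this kind of prime-by-prime splitting of $\sigma_{\b a,n}(|\Delta_a|)$ can fail in general, so it is safer to argue at the full modulus $\Da$.
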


We can also get an explicit saving in the error term, for the price of working under a stronger generalised Riemann hypothesis. Let $\HRHl(a)$ be the hypothesis that
\begin{quote}
 for each square-free $k>0$ all Hecke $L$-functions of the number field $\Q(\zeta_k, \sqrt[k]{a})$ satisfy the Riemann hypothesis.
\end{quote}
\begin{theorem}\label{thm:main_2}
    Let $a_1,a_2,a_3$ be three integers none of which is $-1$ or a perfect square. Assuming $\HRHl(a_i)$ for $i=1,2,3$, we have for $\beta\in(0,1)$,
\begin{equation}
\label{eq:main}
  \sum_{\substack{p_1+p_2+p_3=n
\\
\forall i:\ \F_{p_i}^*=\langle a_i \rangle}}
\prod_{i=1}^3
 \log p_i 
=\consta(n)
n^2
+O_{\b{a},\beta}(n^2 (\log n)^{-\beta})  
,\end{equation}
where the implied constant is effective 
and 
depends at most on $a_1,a_2,a_3$
and $\beta$.
\end{theorem}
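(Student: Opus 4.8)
The plan is to follow the same strategy as in the proof of Theorem \ref{thm:main}, combining the circle method with Hooley's treatment of the Artin conjecture, but being quantitatively more careful everywhere so as to replace the qualitative error term $o(n^2)$ by a power-of-log saving. First I would return to the Hooley--Artin characteristic-function expansion: the condition $\F_{p}^* = \langle a\rangle$ is detected by writing $\sum_{k}\mu(k) \mathbbold{1}[p \equiv 1 \bmod k,\ a \text{ is a } k\text{-th power mod } p]$, and by Chebotarev this inner indicator has density $1/[\Q(\zeta_k,\sqrt[k]{a}):\Q]$ among primes. Splitting the range of $k$ at a parameter $\xi = \xi(n)$ (to be optimised, typically a small power of $\log n$ or $n^{\delta}$ for suitable $\delta$) gives a main term from $k \le \xi$ and two tail contributions. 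Under $\HRHl(a)$ the effective Chebotarev bound for the fields $\Q(\zeta_k,\sqrt[k]{a})$ has an explicit error term of the shape $O(\sqrt{x}\log(kx))$ with polynomial dependence on $k$ in the discriminant, and this is exactly what is needed to push $\xi$ up to a genuine power of $n$; the combinatorial tail $\xi < k$ is controlled by the usual Brun--Titchmarsh-type argument as in \cite{hooley}, again with a power saving once $\xi$ is a small power of $n$.

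Next I would feed the resulting arithmetic function into the circle method. For each fixed $k \le \xi$ we are led to count solutions of $p_1 + p_2 + p_3 = n$ with congruence conditions $p_i \equiv 1 \bmod k_i$ together with the splitting conditions defining the fields; after the Hooley expansion these all reduce to ordinary linear congruence conditions on the $p_i$, so the relevant exponential sums are twisted von Mangoldt sums over arithmetic progressions. On the major arcs I would use the explicit form of the prime number theorem in arithmetic progressions under GRH (which is a consequence of $\HRHl(a_i)$ since the relevant $L$-functions sit inside those of $\Q(\zeta_k,\sqrt[k]{a_i})$) to get an asymptotic with error $O(x \exp(-c\sqrt{\log x}))$, more than enough. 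On the minor arcs I would use Vinogradov's bound for $\sum_{p \le x} (\log p) e(\alpha p)$ in the quantitatively explicit form, uniformly over the progressions, which costs only powers of $\log$ and of $k$; summing trivially over the $O(\xi^3)$ triples $(k_1,k_2,k_3)$ is affordable because each minor-arc contribution already carries a saving of a large fixed power of $\log n$ that dominates any fixed power of $\xi$ when $\xi$ is itself only a small power of $n$ — alternatively one keeps $\xi$ a power of $\log n$ and then the minor-arc saving is genuinely $(\log n)^{-A}$ for arbitrary $A$.

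Assembling the pieces, the main term is $\sum_{k_1,k_2,k_3 \le \xi} \mu(k_1)\mu(k_2)\mu(k_3)\,(\text{local densities}) \cdot (\text{singular series term}) \cdot n^2/2$, and completing each $k_i$-sum back to infinity introduces a further tail error which, by the explicit decay of the summands, is again a power saving; this completed triple sum is precisely the constant $\consta(n)$ analysed in the rest of the paper. Collecting all error contributions — effective Chebotarev tail, combinatorial Hooley tail, major-arc error, minor-arc error, and completion tail — and choosing $\xi$ optimally yields the stated bound $O_{\b{a},\beta}(n^2(\log n)^{-\beta})$ for every $\beta \in (0,1)$, with an effective implied constant since every ingredient (GRH-conditional PNT in progressions, Vinogradov's bound, Brun--Titchmarsh) is effective. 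The main obstacle I expect is bookkeeping the uniformity in $k$: one must track the dependence on the moduli $k_i$ through the Chebotarev error, through the major/minor arc dissection (the choice of arcs should not depend on the $k_i$), and through the singular-series manipulations, making sure the accumulated powers of $k_i$ never outrun the saving; this is routine but delicate, and is the step where the strengthening from $\HRH$ to $\HRHl$ (Hecke $L$-functions rather than just the Dedekind zeta function) is actually used, since it is the individual Hecke characters that govern the progressions $p \equiv 1 \bmod k$ refined by the power-residue conditions.
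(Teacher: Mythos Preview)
Your overall architecture matches the paper's: Hooley's truncation, then circle method for each triple $\b{k}$, then completion of the $\b{k}$-sum. But one step contains a genuine gap.

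You write that after the Hooley expansion the conditions ``all reduce to ordinary linear congruence conditions on the $p_i$'', so that the relevant exponential sums are twisted von Mangoldt sums over arithmetic progressions, to which Vinogradov's bound applies on the minor arcs. This is not correct. The condition that $p$ split completely in $G_{a,k}=\Q(\zeta_k,a^{1/k})$ is equivalent to $p\equiv 1\pmod k$ \emph{together with} the condition that $a$ be a $k$-th power residue modulo $p$. The second of these is a power-residue condition, not a congruence on $p$ modulo any fixed integer independent of $p$. Hence the sums you actually need,
\[
f_{a,k}(\alpha)=\sum_{\substack{p\le n\\ p\in\mathrm{Spl}(G_{a,k})}}(\log p)\,\e(\alpha p),
\]
are exponential sums over primes in a fixed Chebotarev class, not over primes in a progression, and Vinogradov's bound does not apply to them directly.

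The paper resolves this by passing to Hecke characters: expanding $\e_q(cp)$ by Dirichlet characters $\chi\pmod q$ and observing that $\chi\circ\absnorm$ is a ray class character of $G_{a,k}$, one reduces to sums $\sum_{\absnorm\mathfrak n\le x}\Lambda(\mathfrak n)\chi(\absnorm\mathfrak n)$, which are controlled with a square-root error precisely under $\HRHl(a)$. This is where, and why, the strengthening from $\HRH$ to $\HRHl$ is used. Moreover the paper applies this GRH-conditional bound on \emph{both} major and minor arcs, obtaining the power-saving estimate of Proposition~\ref{prop:grhvino},
\[
V_{\b{a},\b{k}}(n)=\tfrac12\,\mathfrak S_{\b{a},\b{k}}(n)\,n^2+O\big(n^{11/6}(\log n)^6(\max_i k_i)^6\big),
\]
uniformly in the $k_i$. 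With this in hand the rest is short: one takes $\xi_1=(\log n)/\log\log n$, inserts the proposition into~\eqref{eq:finnope}, notes that each $k_i\le\prod_{p\le\xi_1}p\le n^{2/\log\log n}$ so the accumulated factor $(\max_i k_i)^{18}$ is $n^{o(1)}$, and completes the $\b{k}$-sum via Lemma~\ref{lem:completion}.

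A secondary correction: Hooley's truncation is at \emph{prime factors} $q\le\xi_1$, not at $k\le\xi$. The $\b k$-sum therefore ranges over square-free integers supported on primes $\le\xi_1$, giving $8^{\pi(\xi_1)}$ triples rather than $O(\xi^3)$; this is why $\xi_1$ cannot be taken to be a power of $n$, and why the choice $\xi_1=(\log n)/\log\log n$ is essentially forced.
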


Before returning to the explicit description of our factor $\consta(n)$, let us briefly review the relevant literature on Artin's conjecture and the ternary Goldbach problem, and introduce some necessary notation along the way.

\subsection{Artin's conjecture}
\label{s:artin}
Fix an integer $a\neq -1$ which is not a perfect
square.
A question going back to Gauss regards the
infinitude
of primes
having $a$ as a primitive root.
It was realised by Artin that the question admits an interpretation through algebraic number theory.
Denote by
$\zeta_k$ 
a primitive
$k$th root of unity
and 
define
for any positive 
square-free integer $k$
the number field
\begin{equation}
\label{def:gal01}
G_{a,k}:=
\Q(a^{1/k},\zeta_k)
.\end{equation}
Artin's criterion
states that  
the prime $p$ has  
$a$ as a primitive root 
if and only if 
for every prime $q$ 
the rational prime $p$ 
does not split completely in $G_{a,q}$. 
This led to the formulation of the following conjecture
via a collective effort due to Artin, Lehmer and Heilbronn.
Define 
\begin{align}
\label{def:del}
\Delta_a &:= \text{Disc}(\QQ(\sqrt{a}))\text{, the discriminant of $\QQ(\sqrt{a})$}\\
\label{def:hdel}
h_a&:=\max\big\{m \in \N:a \text{ is an } m\text{th power}\big\},\\
\label{def:cchh}
\c{A}_a&:=
\prod_{p\mid h_a} \l(1-\frac{1}{p-1}\r)
\prod_{p\nmid h_a} \l(1-\frac{1}{p(p-1)}\r)
\end{align}
and for positive integers $q$  let  
\begin{equation}
\label{def:dagsuo}
f_a^\ddagger(q):=
\Big(\prod_{\substack{p\mid q,p \mid h_a}}(p-2)^{-1}\Big)
\Big(\prod_{\substack{p\mid q,p\nmid h_a}}(p^2-p-1)^{-1}\Big)
.\end{equation} 
Here, and throughout our paper, 
the letter $p$ 
is reserved for rational primes.
We furthermore define 
\begin{equation}
\label{def:epicafe}
\c{L}_a:=
\c{A}_a
\cdot
\big(
1+
\mu(2|\Delta_a|)
f_a^\ddagger(|\Delta_a|)
\big)
,\end{equation} 
where $\mu$ is the M\"{o}bius function.
Artin's conjecture then states that  
\begin{equation}
\label{eq:falsecorrect}
\lim_{x\to+\infty}
\frac{\#
\big
\{p\leq x:
\F_{p}^*=\langle a \rangle
\big\}}
{\#\{p\leq x\}}
=
\c{L}_a
.\end{equation}
This conjecture 
is of 
substantial difficulty:
there is no 
value of $a$
for which the limit is
known to be positive.
In fact, it is not even known whether for every integer 
$a$
that is not a square or $-1$ there exists a prime having primitive root $a$.

A significant
step in the subject
has been the, conditional under $\HRH(a)$, 
resolution of Artin's conjecture
by Hooley~\cite{hooley}. 
His method is
pivotal 
in
the present work.
Notable
progress was later made by
Heath-Brown~\cite{MR830627},
who building on work of Gupta and Murty~\cite{MR762358},
showed 
unconditionally
that at least $\gg x/(\log x)^2$
primes $p\leq x$ 
have primitive root $q,r$ or $s$,
where $\{q,r,s\}$
is any set of non-zero integers which is multiplicative independent and such that 
none of $q,r,s,-3qr,-3qs,-3rs$ or $qrs$ is a square.
There is a rather extensive
list of
further results,
as well as 
certain cryptographic
applications;
the reader is referred to the comprehensive survey of Moree~\cite{moreesurvey}.
Lenstra~\cite{MR0480413} used Hooley's method
to show,
conditionally on $\HRH(a)$,
the existence of 
the
Dirichlet 
density of primes in an arithmetic progression
and with $a$ as primitive root. 
An explicit formula for these densities was given later by Moree~\cite{moreeprog}.
To describe Moree's result we need the following notation.
Let
\begin{equation}
\label{def:betain}
\beta_a(q)
:=
\begin{cases}  
(-1)^{\frac{\frac{\Delta_a}{\gcd(q,\Delta_a)}-1}{2}}\gcd(q,\Delta_a),
&\mbox{if } \frac{\Delta_a}{\gcd(q,\Delta_a)} \equiv 1 \md{2}\\ 
1 & \mbox{otherwise, }
\end{cases}
\end{equation}
and observe that $\beta_a(q)$ is a fundamental discriminant in case $\Delta_a/\gcd(q,\Delta_a)\equiv 1\bmod 2$. For positive integers $q$  let  
\begin{equation}
\label{def:dagsolo}
f_a^\dagger(q):=
\prod_{\substack{p\mid h_a,p \mid q }} \l(1-\frac{1}{p-1}\r)^{-1}
\prod_{\substack{p\nmid h_a,p \mid q }} \l(1-\frac{1}{p(p-1)}\r)^{-1}
.\end{equation} 
\begin{definition} 
\label{def:deltaa}
Assume that $a\neq -1$ is a non-square integer,
let $\Delta_a,h_a$ be as in~\eqref{def:del},~\eqref{def:hdel}
and assume that $x,q$ are integers with $q>0$. We define
\begin{equation}
\label{def:prolabainoume}
\c{A}_a(x\hspace{-0,3cm}\mod{q}):=
\c{A}_a \cdot
\begin{cases}  
\frac{f_a^\dagger(q)}{\phi(q)} \prod_{p|x-1, p| q}
\Big(1-\frac{1}{p}\Big),&\mbox{if } \gcd(x-1,q,h_a)=\gcd(x,q)=1,\\  
0,&\mbox{otherwise, } \end{cases}
\end{equation}
and
\[ 
\delta_a(x\hspace{-0,3cm}\mod{q}) 
:=
\c{A}_a(x\hspace{-0,3cm}\mod{q})
\Bigg(
1+\mu\l(\frac{2|\Delta_a|}{\gcd(q,\Delta_a)}\r)\left(\frac{\beta_a(q)}{x}\right)f_a^\ddagger\l(\frac{|\Delta_a|}{\gcd(q,\Delta_a)}\r) 
\Bigg)
,\]
where
$\phi(\cdot)$ is the Euler totient function
and
$\left(\frac{\cdot}{\cdot}\right)$ is
the Kronecker quadratic symbol.
\end{definition}
Moree's result~\cite{moreeprog}
states that, conditionally under $\HRH(a)$,
the Dirichlet
density of primes in an arithmetic progression
and with $a$ as primitive root
equals $\delta_a(x\hspace{-0,2cm}\mod{q})$.
His work will prove of central importance
in our interpretation of the 
Artin factor 
for the ternary 
Diophantine problem under study.

\subsection{Ternary Goldbach problem}
\label{s:vinogradov}
The ternary Goldbach problem has been one of the most central
subjects in   analytic number theory;
it asserts that every odd integer greater than $5$ is the sum of $3$ primes.
Hardy and Littlewood~\cite{MR1555183}
used the circle method
to provide the first 
serious approach to the problem;
they proved an asymptotic formula for 
the number of representations of $n$ as a sum of $k$ primes ($k\geq 3$)
conditionally on the
veracity of the generalised Riemann hypothesis.
This hypothesis was
removed later by 
Vinogradov~\cite{zbMATH03026053}.
His result states that  for every $\beta>0$ one has 
for all 
odd
integers $n$ that 
\[
\sum_{p_1+p_2+p_3=n}
\prod_{i=1}^3
\log p_i 
=
\frac{1}{2}
\Bigg(
\prod_{p}
\varrho_{p}(n)
\Bigg)
n^2
+O_\beta(n^2 (\log n)^{-\beta}) 
,\] 
where the
product is over all primes,
the
implied constant depends at most on $\beta$,
and
\begin{equation}
\label{eq:to2becomp}
\varrho_{p}(n)
:=
p
\Bigg(
\sum_{\substack{
b_1, b_2, b_3
\in 
(\Z/p\Z)^*
\\ b_1+b_2+b_3\equiv n \md{p}}} 
\frac{1}{(p-1)^3}  
\Bigg)
.\end{equation} 
This can be thought as the ratio of the probability that a random vector 
 $\b{b} \in ((\Z/p\Z)^*)^3$
satisfies $\sum_{1\leq i\leq 3} b_i\equiv n \md{p}$
to
the probability that a random vector 
 $\b{b} \in (\Z/p\Z)^3$
satisfies $\sum_{1\leq i\leq 3} b_i\equiv n \md{p}$, as made clear from
\begin{equation}
\label{eq:explanation?}
p=
\Bigg(\sum_{\substack{b_1,b_2,b_3 \md{p}\\ b_1+b_2+b_3\equiv n \md{p}}} \hspace{-0,2cm} \frac{1}{p^3} \Bigg)^{-1}
.\end{equation} 
It should be mentioned that 
Helfgott~\cite{helfgott}
recently
settled the ternary Goldbach problem.
Using recent developments in
additive combinatorics,
Shao~\cite{MR3165421}
provided general
conditions for an infinite subset $\c{P}$
of the primes 
that allow solving $n=p_1+p_2+p_3$ 
for large odd $n$ with each $p_i$ in $\c{P}$.
The 
result most related to our work 
is~\cite[Th.1.3]{MR3165421};
it states that if there exists $\delta>0$
such that 
the intersection of $\c{P}$
with each invertible
residue class modulo every integer $q$
has density at least 
$\delta/ \phi(q)$,
then,
under suitable
additional assumptions,
$n=p_1+p_2+p_3$ 
is soluble within~$\c{P}$.
This does not cover our situation, since
if $h_a>1$
then 
the densities 
$\delta_a(1\hspace{-0,2cm}\mod{h_a})$ vanish.
Furthermore, if $h_a=1$ then 
these densities
could become arbitrarily close to zero.
Indeed, if $q$ is of the form
$\prod_{p\leq T} p$
for some $T>2$  
then it is easy to see that 
\[ 
\delta_a(1\hspace{-0,3cm}\mod{q}) 
\phi(q)
\leq  
\prod_{p\leq T}
\Big(1-\frac{1}{p}\Big)
\ll
\frac{1}{\log \log q}
.\]
It would be  
interesting
to modify  
his 
approach in order to recover some of our results, 
for example a lower bound of the correct order of magnitude 
as the one provided by 
Theorem~\ref{thm:main}.
This approach would still require $\HRH(a_i)$
and besides the 
focal point of 
our paper 
is the
`Artin factor'
 $\consta(n)$
in
Theorem~\ref{thm:main}.
A further result related
to ours is that of
Kane~\cite{MR3060874}. 
A very 
special case of his work
provides an asymptotic 
for
the number of solutions of
$n=p_1+p_2+p_3$ 
when each $p_i$ 
lies in a prefixed Chebotarev class of a Galois extension of $\Q$. 
Primes with a prescribed
primitive root 
do admit a Chebotarev
description,
however the number of conditions involved
is not fixed.

\subsection{The factor $\consta(n)$}\ 
\label{s:results}

Let us now describe the representation of $\consta(n)$ that is obtained directly from the proof of Theorem \ref{thm:main}. Define for $q > 0$ and square-free $k > 0$  the number field $F_{a,q, k} := \mathbb{Q}(\zeta_q, \zeta_k, a^{1/k})$
, so that $G_{a,k}=F_{a,k,k}$. 
Moreover, for $b\in\ZZ$ with $\gcd(b,q)=1$, we let  $c_{a,q, k}(b):=1$ if the restriction of the automorphism $\sigma_b : \zeta_q \mapsto \zeta_q^b$ of $\mathbb{Q}(\zeta_q)$ to $\mathbb{Q}(\zeta_q) \cap G_{a,k}$ is the identity and we otherwise let $c_{a,q, k}(b):=0$. We use the usual notation $\e_q(z):= \exp(2 \pi i z/q)$, for $z\in \mathbb{C}, q\in \N$. The exponential sum
\begin{equation}
\label{def:exp99}
S_{a,q, k}(z) := \sum_{b \in (\Z/q\Z)^*} 
c_{a,q, k}(b) \e_q(zb)
\end{equation}
and the entities 
\begin{equation}
\label{def:exp9bs9}
L_{\b{a},q, \mathbf{k}}(z) := \prod_{i=1}^3
S_{a_i,q, k_i}(z),
\end{equation}
\begin{equation}
\label{def:podarakia}
d_{\b{a},\mathbf{k}}(q) :=
\prod_{i=1}^3
[F_{a_i,q, k_i} : \mathbb{Q}]
\end{equation}
will play a
central role throughout this paper. For positive square-free 
$k_1,k_2,k_3$ we define 
\begin{equation}\label{def:sing09}
\mathfrak{S}_{\b{a},\b{k}}(n):=
\sum_{q = 1}^\infty \frac{1}{d_{\b{a},\mathbf{k}}(q)} 
\sum_{\substack{z\in \Z/q\Z
\\
\gcd(z,q)=1}} \e_q(-n z) L_{\b{a},q, \mathbf{k}}(z)  
.
\end{equation} 
It will be made clear 
in~\S\ref{s:circle}
that this
is the \textit{singular series} for the representation
problem $n=p_1+p_2+p_3$ where for each $i$ the prime $p_i$ splits completely in $G_{a_i,k_i}$. The absolute convergence of the sum over $q$ will be verified in Lemma~\ref{lem:completion}. 
With this notation in place, the leading factor in Theorem \ref{thm:main} and Theorem \ref{thm:main_2} is given by  
\begin{equation}\label{eq:const_1}
  \consta(n)=\frac{1}{2}
\bigg(
\sum_{\b{k}\in \N^3}
\mu(k_1)
\mu(k_2)
\mu(k_3)
\mathfrak{S}_{\b{a},\b{k}}(n)
\bigg).
\end{equation}
The sum over $\b{k}$ will be shown to be absolutely convergent in Lemma~\ref{lem:completion}. It is
desirable 
to describe the integers $n$ for which $\consta(n)\neq 0$. 
An important remark is that
if the method of Hooley works in an Artin conjecture-related problem
then it provides a leading constant which is an infinite 
alternating sum of Euler products
that is not obviously equal to the conjectured Artin constant.
Such a phenomenon 
is well documented and 
can
be observed
for instance
in the work of Lenstra~\cite{MR0480413},
who studied the density of
primes in arithmetic progressions and
with a prescribed primitive root,
as well as the work of
Serre~\cite{sereos},
who studied the density of primes $p$
for which the reduction of an elliptic curve over
$\F_p$
is cyclic.
Artin constants have not been studied
in the context of 
Diophantine problems prior to the present
work,
however, we will show that
$\consta(n)$
factorises 
partially and we shall
provide an interpretation for $\consta(n)$.
For every positive integer $d$ we define  
the densities 
\begin{equation}
\label{def:sigmapenotdelta} 
\sigma_{\b{a},n}(d)
:=d
\Bigg(
\
\sum_{\substack{b_1,b_2,b_3 \md{d}\\ b_1+b_2+b_3\equiv n \md{d}}} 
\prod_{i=1}^3 
\frac{\delta_{a_i}(b_i\hspace{-0,3cm}\mod{d})}{\c{L}_{a_i}}
\
\Bigg)
.\end{equation}
The factor $d$ has an explanation that is identical to the explanation of the factor $p$ in~\eqref{eq:to2becomp}-\eqref{eq:explanation?}.
Let $[\cdot]$ denote the least common multiple,
$\nu_p(\cdot)$ be the $p$-adic valuation
and define 
\begin{equation}
\label{def:mytonga} 
\Da
:=
2^{
\min\{\nu_2(\Delta_{a_i}):1\leq i \leq 3\}
-
\max\{\nu_2(\Delta_{a_i}):1\leq i \leq 3\}
}
[\Delta_{a_1},\Delta_{a_2},\Delta_{a_3}].
\end{equation}

\vspace{10 pt}

\begin{theorem}
\label{thm:const}
The factor $\consta(n)$ in Theorems \ref{thm:main} and \ref{thm:main_2} factorises as follows,
\begin{equation}
\label{ee:grhtyuiop} 
\consta(n)=
\frac{1}{2}
\Big(\prod_{i=1}^3\c{L}_{a_i}\Big)
\sigma_{\b{a},n}(\Da)
\prod_{p\nmid \Da 
}
\sigma_{\b{a},n}(p)
.\end{equation}
Furthermore, whenever $\consta(n)>0$, we have 
 \begin{equation}
\label{th:grhreplaceall} 
\consta(n)\gg\prod_{i=1}^3\frac{\phi(h_{a_i})}{|
\Delta_{a_i}
|^2 h_{a_i}}
,\end{equation} 
with an absolute implied constant.
\end{theorem}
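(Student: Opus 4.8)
The plan is to start from the representation \eqref{eq:const_1} of $\consta(n)$ as the alternating sum over $\b{k}\in\N^3$ of the singular series $\mathfrak{S}_{\b{a},\b{k}}(n)$, and to rearrange it into a product over primes. The first step is to analyse the local structure of $\mathfrak{S}_{\b{a},\b{k}}(n)$: since the defining sum \eqref{def:sing09} runs over $q\geq 1$ with a multiplicative-looking summand $d_{\b{a},\b{k}}(q)^{-1}\sum_{z}\e_q(-nz)L_{\b{a},q,\b{k}}(z)$, I would verify that $q\mapsto d_{\b{a},\b{k}}(q)^{-1}\sum_{\gcd(z,q)=1}\e_q(-nz)L_{\b{a},q,\b{k}}(z)$ is multiplicative in $q$ (this rests on the Galois-theoretic fact that the fields $F_{a_i,q,k_i}$ behave well under taking compositra for coprime conductors, together with the standard multiplicativity of Ramanujan-type sums). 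Absolute convergence from Lemma~\ref{lem:completion} then licenses an Euler product $\mathfrak{S}_{\b{a},\b{k}}(n)=\prod_p \mathfrak{S}_{\b{a},\b{k},p}(n)$, and likewise the whole $\b{k}$-sum can be distributed: because $\mu(k_1)\mu(k_2)\mu(k_3)$ forces each $k_i$ square-free, and the local factor at $p$ only sees the $p$-parts of the $k_i$ together with finitely many ramified primes, I would reorganise $\sum_{\b{k}}\mu(k_1)\mu(k_2)\mu(k_3)\prod_p\mathfrak{S}_{\b{a},\b{k},p}(n)$ into a product over $p$ of a local alternating sum.

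The second step is to identify each local factor with the density $\sigma_{\b{a},n}(p)$ at all primes $p$ not dividing $\Da$. For such $p$ there is no ramification interference from the discriminants $\Delta_{a_i}$, so the local alternating sum over the $p$-parts of the $k_i$ telescopes: summing $\mu$ against the reciprocals of the degrees $[F_{a_i,p^{j},p^{j}}:\Q]$ reproduces exactly the local Artin density $\delta_{a_i}(\,\cdot\bmod p)/\c{L}_{a_i}$ appearing in \eqref{def:sigmapenotdelta}, by the same computation underlying Moree's formula in Definition~\ref{def:deltaa}. Carrying the three factors through and reinserting the combinatorial weight coming from the circle-method count of $b_1+b_2+b_3\equiv n$ gives precisely $\sigma_{\b{a},n}(p)$, with the prefactor $\tfrac12\prod_i\c{L}_{a_i}$ collecting the normalising constants pulled out of the three local densities. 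The primes dividing $\Da$ are handled together: their combined contribution is a single finite alternating sum over the bounded $k_i$-parts supported on primes dividing $\Da$, and I would check by direct manipulation that this finite sum equals $\sigma_{\b{a},n}(\Da)$; the somewhat unusual power of $2$ in the definition \eqref{def:mytonga} of $\Da$ is exactly what is needed so that $\gcd(\Da,p)=1$ for $p\nmid\Da$ decouples the $2$-adic ramification of the three quadratic fields $\Q(\sqrt{a_i})$ cleanly. This yields the factorisation \eqref{ee:grhtyuiop}.

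For the lower bound \eqref{th:grhreplaceall}, assume $\consta(n)>0$, so every factor on the right of \eqref{ee:grhtyuiop} is positive. I would bound each piece from below separately. The constants $\c{L}_{a_i}$ are, by \eqref{def:epicafe} and \eqref{def:cchh}, at least an absolute constant times $\prod_{p\mid h_{a_i}}(1-\tfrac1{p-1})$, which is $\gg \phi(h_{a_i})/h_{a_i}$ up to the contribution of the prime $2$ (if $2\mid h_{a_i}$ the Artin density degenerates, but then $\consta(n)=0$, contradicting our hypothesis). For the Euler product $\prod_{p\nmid\Da}\sigma_{\b{a},n}(p)$ I would show each factor is $1+O(p^{-2})$ uniformly, so the product converges to something bounded below by an absolute positive constant; this is the standard convergence estimate for singular series, using that the $p$-adic densities $\delta_{a_i}(\,\cdot\bmod p)/\c{L}_{a_i}$ differ from $1/\phi(p)$ only by $O(p^{-2})$ at unramified $p$. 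The genuinely delicate term is the finite factor $\sigma_{\b{a},n}(\Da)$: it is positive by assumption but can be small, and the loss here is what produces the powers $|\Delta_{a_i}|^{-2}$ in \eqref{th:grhreplaceall}. I would bound it below by noting that $\sigma_{\b{a},n}(\Da)$ is a sum of non-negative terms over residues $b_1+b_2+b_3\equiv n\bmod\Da$, so it suffices to exhibit one admissible triple $(b_1,b_2,b_3)$ whose contribution $\prod_i \delta_{a_i}(b_i\bmod \Da)/\c{L}_{a_i}$ is $\gg \prod_i \phi(h_{a_i})/(|\Delta_{a_i}|^2 h_{a_i})$; such a triple exists precisely because $\consta(n)>0$ forces at least one term to be nonzero, and each nonzero $\delta_{a_i}(b_i\bmod\Da)/\c{L}_{a_i}$ is, from Definition~\ref{def:deltaa}, a ratio of the shape $f^\dagger_{a_i}(\Da)\phi(\Da)^{-1}\prod_{p\mid b_i-1,\,p\mid\Da}(1-\tfrac1p)\cdot(1+\text{quadratic symbol term})$, every factor of which I can bound from below in terms of $|\Delta_{a_i}|$ and $h_{a_i}$ alone. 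The main obstacle I anticipate is precisely this last bookkeeping: controlling the interaction between the Kronecker-symbol correction terms $\mu(\cdot)(\tfrac{\beta_{a_i}(\cdot)}{b_i})f^\ddagger_{a_i}(\cdot)$ for the three indices simultaneously, and ruling out that they conspire to make $\sigma_{\b{a},n}(\Da)$ positive but far smaller than the claimed bound — this forces one to use the divisor-bound and totient estimates for $|\Delta_{a_i}|$ carefully rather than term-by-term.
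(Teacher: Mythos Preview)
Your overall architecture for the lower bound \eqref{th:grhreplaceall} matches the paper's: bound $\prod_{p\nmid\Da}\sigma_{\b{a},n}(p)\gg 1$ by explicit computation at $p=2,3$ and $1+O(p^{-2})$ for larger $p$ (the paper does this in Lemmas~\ref{lem:anapantexo} and~\ref{lem:nmiddeducomeon2}), and bound $\sigma_{\b{a},n}(\Da)$ below by exhibiting a single admissible triple and estimating its contribution (the paper's Lemma~\ref{lem:nonzero}). So that part is fine, modulo the bookkeeping you flag.

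The factorisation argument, however, has a genuine gap. Your first step asserts that $q\mapsto d_{\b{a},\b{k}}(q)^{-1}\sum_{\gcd(z,q)=1}\e_q(-nz)L_{\b{a},q,\b{k}}(z)$ is multiplicative in $q$. It is not. By Lemma~\ref{lem:bythegive}, $[F_{a_i,q,k_i}:\Q]$ carries the factor $\epsilon(q,k_i)$, which equals $2$ precisely when $2\mid k_i$ and $\Delta_{a_i}\mid[q,k_i]$; the latter divisibility is not a multiplicative condition in $q$. Correspondingly, Lemma~\ref{lSkqa1} gives multiplicativity of $S_{a,q,k}$ only under the side condition that one of the two coprime factors of $q$ is coprime to $\Delta_a$. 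So for fixed $\b{k}$ there is no clean Euler product $\mathfrak{S}_{\b{a},\b{k}}(n)=\prod_p\mathfrak{S}_{\b{a},\b{k},p}(n)$, and your plan to ``distribute the $\b{k}$-sum into the product'' cannot be carried out prime by prime: the local factor at a prime $p\mid\Delta_{a_i}$ genuinely depends on the $p'$-parts of $q$ and $k_i$ for other primes $p'\mid\Delta_{a_i}$. This entanglement is exactly why $\consta(n)$ fails to be a full Euler product (see~\S\ref{s:nonfa}), so any argument that pretends to full multiplicativity must break.

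The paper avoids this by reversing your order of operations. It first sums over $\b{k}$ globally, using Moree's identity \eqref{eq:len987} to obtain (Lemma~\ref{lem:lenlen1})
\[
\sum_{\b{k}}\Big(\prod_i\mu(k_i)\Big)\mathfrak{S}_{\b{a},\b{k}}(n)=\sum_{q\ge 1}\sum_{c\in(\Z/q\Z)^*}\e_q(-nc)\prod_{i=1}^3\sum_{b_i}\e_q(b_ic)\,\delta_{a_i}(b_i\hspace{-0.2cm}\mod q),
\]
so that the non-multiplicative objects $\delta_{a_i}(\cdot\bmod q)$ appear explicitly. A vanishing lemma (Lemma~\ref{lem:altars of madness}) shows only $q$ with square-free odd part and $\nu_2(q)\le\min_i\nu_2(\Delta_{a_i})$ contribute; this is the true source of the $2$-power in~\eqref{def:mytonga}, not any decoupling statement. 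The remaining $q$ are then decomposed as $q=\prod_{\b{w}\in\{0,1\}^3}q(\b{w})$, where $\b{w}$ records which $\Delta_{a_i}$ each prime divides, and the entangled block (all $\b{w}\ne(1,1,1)$) is reassembled via Lemma~\ref{lem:sometimesyoucanseethecomments} into the single factor $\sigma_{\b{a},n}(\Da)\prod_i\c{L}_{a_i}$. Only the genuinely unramified part $q((1,1,1))$ factors as an Euler product over $p\nmid\Da$.
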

\vspace{10 pt}
For an interpretation of the right side of~\eqref{ee:grhtyuiop}
see~\S\ref{s:interdinner}. 
The proof of~\eqref{ee:grhtyuiop} 
(that will be provided in~\S\ref{proofofee:grhtyuiop})
requires adroit manoeuvring.
This
is
because 
the densities $\delta_a(b_i\hspace{-0,2cm}\mod{d})$ in~\eqref{def:sigmapenotdelta} 
have a complicated
dependence on $b_i$ and also do not 
exhibit
good factorisation properties 
with respect to $d$.

Let us 
furthermore
comment that
in contrast to the usual applications of the circle method,
the constant in~\eqref{ee:grhtyuiop}
does not factorise  
 as an 
Euler product,
see~\S\ref{s:nonfa}  
for a precise statement of this phenomenon. 
The following consequence of Theorem \ref{thm:main} and Theorem \ref{thm:const} can be interpreted as a local-global principle.
\vspace{10 pt}
\begin{corollary}\label{cor:loc_glob}
 Let $a_1,a_2,a_3$ be three integers none of which is $-1$ or a perfect square, and assume $\HRH(a_i)$ for $i=1,2,3$. For every sufficiently large odd integer $n$, the following statements are equivalent:
\begin{enumerate}
\item There are primes $p_1,p_2,p_3$ 
 not dividing $6\Delta_{a_1}\Delta_{a_2}\Delta_{a_3}$
such that 
each $a_i$ is
a primitive root modulo $p_i$ and $p_1+p_2+p_3 = n$.
\item For $d \in \{3, \Da\}$, 
there are primes $p_1,p_2,p_3$ with $\gcd(p_1p_2p_3,2d)=1$
such that $a_i$ is a primitive root for $p_i$ for every $i=1,2,3$ 
and $p_1+p_2+p_3 \equiv n \bmod d$. 
\end{enumerate}
\end{corollary}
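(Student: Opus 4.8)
The plan is to prove, for every sufficiently large odd $n$, the implications $(1)\Rightarrow(2)$, then $\consta(n)>0\Rightarrow(1)$, and finally the equivalence $(2)\Leftrightarrow\bigl(\consta(n)>0\bigr)$; together these close the loop. The first step is purely formal: if $p_1,p_2,p_3$ are primes as in $(1)$, then $p_i\nmid 6\Delta_{a_1}\Delta_{a_2}\Delta_{a_3}$ forces $p_i\notin\{2,3\}$ and $p_i\nmid\Delta_{a_j}$ for all $j$; since $\nu_2(\Da)=\min_j\nu_2(\Delta_{a_j})$ and the odd part of $\Da$ equals that of $[\Delta_{a_1},\Delta_{a_2},\Delta_{a_3}]$, which divides $\Delta_{a_1}\Delta_{a_2}\Delta_{a_3}$, one gets $\gcd(p_1p_2p_3,2d)=1$ for both $d=3$ and $d=\Da$, and reducing $p_1+p_2+p_3=n$ modulo $d$ yields $(2)$ with the very same primes.

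For $\consta(n)>0\Rightarrow(1)$ I would invoke Theorem~\ref{thm:main}: the left side of its asymptotic equals $\consta(n)n^2+o(n^2)$ with $\consta(n)\gg_\b{a}1$ whenever $\consta(n)>0$, so this sum is strictly positive once $n$ is large in terms of $\b{a}$. The tuples in which some $p_i$ divides the fixed integer $6\Delta_{a_1}\Delta_{a_2}\Delta_{a_3}$ contribute only $O_\b{a}(n(\log n)^2)=o(n^2)$ (fix the offending prime; the remaining two primes sum to at most $n$, so even weighted by logarithms their total is $\ll n(\log n)^2$), so a positive contribution from $(1)$-admissible tuples survives, giving $(1)$.

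The core is the equivalence $(2)\Leftrightarrow\consta(n)>0$, which I would derive from \eqref{ee:grhtyuiop}. As no $a_i$ is $-1$ or a perfect square, $h_{a_i}$ is odd, so $\c{A}_{a_i}>0$, and a short case analysis of \eqref{def:epicafe} (the factor $1+\mu(2|\Delta_{a_i}|)f_{a_i}^\ddagger(|\Delta_{a_i}|)$ can vanish only if $f_{a_i}^\ddagger(|\Delta_{a_i}|)=1$, which restricts $|\Delta_{a_i}|$ enough to force $\mu(2|\Delta_{a_i}|)\in\{0,1\}$) gives $\c{L}_{a_i}>0$. Since every factor on the right of \eqref{ee:grhtyuiop} is non-negative and the product $\prod_{p\nmid\Da}\sigma_{\b{a},n}(p)$ converges, $\consta(n)>0$ is equivalent to $\sigma_{\b{a},n}(\Da)>0$ together with $\sigma_{\b{a},n}(p)>0$ for every prime $p\nmid\Da$. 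An analogous case analysis shows that for $p\nmid\Da$ the correction bracket in \eqref{def:sigmapenotdelta} never vanishes, so $\delta_{a_i}(b\bmod p)>0$ for every residue $b$ outside $\{0\}$ and, only when $p\mid h_{a_i}$, $\{1\}$; a short count modulo $p$ then makes $\sigma_{\b{a},n}(p)>0$ automatic for all $p\nmid\Da$ apart possibly from $p=3$ (for $p=2$, which occurs when $\Da$ is odd, positivity is exactly the hypothesis that $n$ is odd; for $p\geq5$ one solves $b_1+b_2+b_3\equiv n\bmod p$ avoiding at most two forbidden residues in each coordinate). Hence $\consta(n)>0$ reduces to $\sigma_{\b{a},n}(\Da)>0$ and, when $3\nmid\Da$, also $\sigma_{\b{a},n}(3)>0$. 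Finally, for $d\in\{3,\Da\}$ I would use Moree's density formula~\cite{moreeprog} (valid under $\HRH(a_i)$) and Lenstra's theorem~\cite{MR0480413}, by which a residue class of vanishing density contains at most the prime $p=2$ with $a_i$ as a primitive root; since $(2)$ only allows primes coprime to $2d$, this identifies $\sigma_{\b{a},n}(d)>0$ with the existence of primes $p_1,p_2,p_3$ coprime to $2d$ having $a_i$ as a primitive root and $p_1+p_2+p_3\equiv n\bmod d$. Treating the cases $3\mid\Da$ (where the condition at $3$ is implied by that at $\Da$) and $3\nmid\Da$ separately, this is exactly statement $(2)$.

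I expect the main obstacle to be this last translation, from the algebraic positivity of the local factors $\sigma_{\b{a},n}(d)$ to the arithmetic solubility statement $(2)$. Because the densities $\delta_{a_i}(b\bmod d)$ depend on $b$ through Kronecker symbols and are not multiplicative in $d$, one must (a) use the precise shape of Moree's formula together with Lenstra's result to exclude a spurious small prime other than $p=2$ from rescuing a class of density zero, and (b) keep careful track of the overlap between the conditions at $3$ and at $\Da$ so that precisely the two moduli appearing in $(2)$ are matched. The remaining ingredients — positivity of $\c{L}_{a_i}$, the automatic positivity of $\sigma_{\b{a},n}(p)$ at the primes $p\nmid3\Da$, and the $o(n^2)$ bound for the tuples involving divisor primes — are routine.
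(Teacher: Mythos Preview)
Your proposal is correct and follows essentially the same route as the paper: the trivial implication $(1)\Rightarrow(2)$, the use of Theorem~\ref{thm:main} together with the $O(n(\log n)^3)$ bound for tuples with a prime dividing $6\Delta_{a_1}\Delta_{a_2}\Delta_{a_3}$ to get $\consta(n)>0\Rightarrow(1)$, and Lenstra's observation that a residue class of zero density contains at most the prime $2$ with the given primitive root, combined with the factorisation~\eqref{ee:grhtyuiop}, to translate $(2)$ into $\sigma_{\b{a},n}(\Da)\sigma_{\b{a},n}(3)>0$ and hence $\consta(n)>0$. The only difference is cosmetic: where the paper invokes Lemma~\ref{lem:nmiddeducomeon2} for the automatic positivity of $\sigma_{\b{a},n}(p)$ at primes $p\nmid 3\Delta_{a_1}\Delta_{a_2}\Delta_{a_3}$ and the positivity of $\c{L}_{a_i}$, you sketch these directly; and you prove the full equivalence $(2)\Leftrightarrow\consta(n)>0$ rather than just the direction needed to close the loop.
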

\vspace{10 pt}
Though part $\emph{(2)}$ of Corollary \ref{cor:loc_glob} may not look like a purely local statement, it is one. In fact, for any $d$ in $\NN$, solubility of the congruence modulo $d$ in primes not dividing $2d$ 
with prescribed primitive roots
is equivalent to the  
statement that $\sigma_{\b{a},d}(n)>0$.  In Lemma \ref{lem:nmiddeducomeon2}, 
we shall see that $\sigma_{\b{a},n}(p)>0$ 
whenever $p\nmid  3 \Delta_{a_1}\Delta_{a_2}\Delta_{a_3}$. Moreover, it is clear from the definition in \eqref{def:sigmapenotdelta}, that whether $\sigma_{\b{a},d}(n)=0$ or not is a local condition modulo $d$.

\vspace{10 pt}

\subsection{Interpretation of the Artin factor for the ternary Goldbach problem}
\label{s:interdinner}
Studying the 
constants
 in any
counting problem
of flavour similar to that of Artin's conjecture
is a non-trivial task
and has been 
analysed
rather extensively.
The problems involve 
primes with a fixed primitive root,
primes in progressions and with a fixed primitive root
and primes such that the reduction of a fixed elliptic curve over the 
corresponding finite field is cyclic,
see the work of
Serre~\cite{sereos}.
The reader that is interested in an overview of the 
work that has been done 
on these constants so far is directed at the work of 
and 
Lenstra--Stevenhagen--Moree~\cite{entanglem},
as well as
the survey of Moree~\cite{moreesurvey}.

We now
focus on
the interpretation of the 
``Artin-factor"
$\c{A}_{\b{a}}(n)$
with the help of~\eqref{ee:grhtyuiop}.
First,
the factor $1/2$
is related to the density of solutions in $\R$
of $\sum_{1\leq i \leq 3}x_i=n$
and it has the exact same interpretation 
as in the classical situation of ternary Goldbach, and therefore,
we do not further
comment on this.

The term 
\[
\c{L}_{a_1}
\c{L}_{a_2}
\c{L}_{a_3}
\]
in~\eqref{ee:grhtyuiop}
should be thought of
as the ``probability"
that for all $i=1,2,3$,
a random prime $p_i$
has primitive root $a_i$, see~\eqref{eq:falsecorrect}.

The factors 
$\sigma_{\b{a},n}(d)$
for $d\in \{\Da\} \cup \{p \text{ prime }:p\nmid \Da\}$
admit an explanation that is comparable
to the analogous densities in the classical case of the ternary Goldbach
problem, 
see~\eqref{eq:to2becomp}.
There
is only one difference,
namely that 
  one has to use
the weight \[
\frac{\delta_{a_i}(b_i\hspace{-0,3cm}\mod{
d
})}{\c{L}_{a_i}} 
\]
instead of 
$1/(p-1)$.
This new 
weight 
equals the \textit{conditional}
probability that
a random 
prime lies in the arithmetic
progression
$b_i\md{d}$
given that it has primitive root $a_i$.

It would be desirable 
to use algebraic considerations 
(for example, the approach of 
 `entanglement' of splitting fields
  as in the work of 
Lenstra--Stevenhagen--Moree~\cite{entanglem}),  
to 
provide a prediction for $\c{A}_{\b{a}}(n)$
with a method that is different to the one in~\S\ref{proofofee:grhtyuiop}.

\vspace{10 pt}

\subsection{The case where all primitive roots are equal}
\label{s:sameroots}
In our next theorem, we provide an explicit description of 
the local conditions in Corollary \ref{cor:loc_glob},
but for space considerations we do so only in the important
case where 
\[
a_1=a_2=a_3=:a
.\]
The first row of 
the following table contains the discriminant of 
 $\Q(\sqrt{a})$
and the second row 
contains the power properties of $a$.
For example, if $a$ is a cube but not a fifth power we shall write 
$a\in \Z^3\setminus \Z^5$.

\vspace{10 pt}

\begin{theorem}
\label{propositionjoe}
Let $a\neq -1$ be a non-square integer and $n\in\NN$. Then the 'Artin factor'
$$\c{A}_{(a,a,a)}(n)$$
is strictly positive 
if and only if $n$ satisfies one of the congruence conditions 
in the third row of the following table.
\newline
\newline
\begin{tabular}{l*{6}{c}r}
$\mathrm{Disc}(\Q(\sqrt{a}))$  &  
Power properties of $a$     &  
Congruence conditions for $n$ \\
\hline
$-3$ & $\Z\setminus (\{-1\}\cup \Z^2)$   & $3\md{6}$  \\  
$-4$ & $\Z\setminus (\{-1\}\cup\Z^2)$  & $1\md{4}$   \\ 
$5$ & $\Z\setminus (\{-1\}\cup\Z^2)$   & $1\md{2}$  and not $0\md{5}$  \\  
$12$ & $\Z\setminus (\{-1\}\cup\Z^2\cup \Z^3)$   & $3,5,7,9\md{12}$  \\  
$12$ & $\Z^{3}\setminus (\{-1\}\cup \Z^2)$   & $3\md{12}$  \\ 
$-15$ & $\Z\setminus (\{-1\}\cup\Z^2\cup \Z^3 \cup \Z^5)$   & $1\md{2}$  and not $0\md{15}$  \\  
$-15$ & $\Z^3\setminus (\{-1\}\cup\Z^2\cup\Z^5)$   & $1\md{2}$  and $3, 6, 9, 12\md{15}$  \\  
$-15$ & $\Z^5\setminus (\{-1\}\cup\Z^2\cup \Z^3)$   & $1\md{2}$  and not $0,1,2,7,8,14\md{15}$  \\  
$-15$ & $\Z^{15}\setminus (\{-1\}\cup\Z^2)$   & $12\md{15}$  \\  
$-20$ & $\Z^5\setminus (\{-1\}\cup\Z^2)$   & $1\md{2}$  and not $1\md{20}$  \\  
$21$ & $\Z^7\setminus (\{-1\}\cup\Z^2\cup \Z^3)$   & $1\md{2}$  and not $8\md{21}$  \\   
$21$ & $\Z^3\setminus (\{-1\}\cup\Z^2\cup\Z^7)$   & $3\md{6}$   \\
$21$ & $\Z^{21}\setminus (\{-1\}\cup\Z^2)$   & $1\md{2}$  and $3,6,12,15\md{21}$  \\ 
$\pm 24$ & $\Z^{3}\setminus (\{-1\}\cup \Z^2)$   & $3\md{6}$  \\  
$60$ & $\Z^{3}\setminus (\{-1\}\cup \Z^2)$   & $3\md{6}$ \\ 
$60$ & $\Z^5\setminus (\{-1\}\cup\Z^2\cup\Z^3)$   & $1\md{2}$  and not $31,41\md{60}$  \\ 
$-84$ & $\Z^{3}\setminus (\{-1\}\cup \Z^2 )$   & $3\md{6}$  \\ 
$105$ & $\Z^{3}\setminus (\{-1\}\cup\Z^2)$   & $3\md{6}$  \\ 
$\pm 120$ & $\Z^{3}\setminus (\{-1\}\cup \Z^2)$   & $3\md{6}$  \\ 
$\pm 168$ & $\Z^{3}\setminus (\{-1\}\cup \Z^2)$   & $3\md{6}$  \\   
$-420$ & $\Z^{3}\setminus (\{-1\}\cup \Z^2)$   & $3\md{6}$  \\  
$\pm 840$ & $\Z^{3}\setminus (\{-1\}\cup \Z^2)$   & $3\md{6}$  \\    
other values $\setminus 3\Z$
& $\Z^{3}\setminus (\{-1\}\cup \Z^2)$   & $3\md{6}$  \\ 
every other value & $\Z\setminus (\{-1\}\cup \Z^2)$   & $1\md{2}$ 
\end{tabular}
\newline
\newline
The second to last row refers to all 
 integers
 $a$ not considered in a row above it, 
as long as $\mathrm{Disc}(\Q(\sqrt{a}))$ is not divisible by $3$.
The last row refers to every 
 integer
$a$ not considered in a row above it.
\end{theorem}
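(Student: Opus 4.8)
The plan is to specialise the factorisation of Theorem~\ref{thm:const} to the diagonal case $a_1=a_2=a_3=a$ and then to carry out an essentially finite analysis of the local factors that can vanish. In this case $\Daaa=|\Delta_a|$, since the $2$-power prefactor in~\eqref{def:mytonga} equals $2^{0}$ and the least common multiple of three copies of $\Delta_a$ is $|\Delta_a|$; hence~\eqref{ee:grhtyuiop} reads
\[
\consta(n)=\tfrac12\,\c L_a^{\,3}\,\sigma_{\b a,n}(|\Delta_a|)\prod_{p\nmid|\Delta_a|}\sigma_{\b a,n}(p),
\]
where $\b a=(a,a,a)$, and every factor on the right is a non-negative real. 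Thus $\consta(n)>0$ if and only if each factor is positive. Here $\c L_a>0$ for every non-square $a\neq-1$: a non-square is not an even power, so $h_a$ is odd and $\c A_a>0$; and by~\eqref{def:epicafe} we have $\c L_a/\c A_a=1+\mu(2|\Delta_a|)f_a^\ddagger(|\Delta_a|)\geq 0$, with equality only if $f_a^\ddagger(|\Delta_a|)=1$ and $\mu(2|\Delta_a|)=-1$, a combination that a direct check rules out. By Lemma~\ref{lem:nmiddeducomeon2}, $\sigma_{\b a,n}(p)>0$ for every $p\nmid 3|\Delta_a|$, and for such $p$ one checks $\sigma_{\b a,n}(p)=1+O(p^{-2})$, so the tail product over these $p$ converges to a positive number. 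Therefore $\consta(n)>0$ if and only if $\sigma_{\b a,n}(|\Delta_a|)>0$ and, unless $3\mid\Delta_a$, also $\sigma_{\b a,n}(3)>0$.

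For any modulus $d$, since $\delta_a(\cdot\bmod d)\geq 0$, the definition~\eqref{def:sigmapenotdelta} shows that $\sigma_{\b a,n}(d)>0$ exactly when the residue $n\bmod d$ lies in the threefold sumset $\c S_d+\c S_d+\c S_d\subseteq\Z/d\Z$, where $\c S_d:=\{\,b\bmod d:\delta_a(b\bmod d)>0\,\}$. So one must determine $\c S_{|\Delta_a|}$ and, when $3\nmid\Delta_a$, $\c S_3$. Substituting $d=|\Delta_a|$ into Definition~\ref{def:deltaa} collapses most of the formula: here $\gcd(d,\Delta_a)=|\Delta_a|$, so $f_a^\ddagger(1)=1$, $\mu(2)=-1$ and $\beta_a(|\Delta_a|)=\Delta_a$, whence
\[
\delta_a(b\bmod{|\Delta_a|})=\c A_a(b\bmod{|\Delta_a|})\Bigl(1-\qr{\Delta_a}{b}\Bigr),
\]
and consequently
\[
\c S_{|\Delta_a|}=\Bigl\{\,b\in(\Z/|\Delta_a|\Z)^{*}:\ \gcd(b-1,|\Delta_a|,h_a)=1,\ \qr{\Delta_a}{b}=-1\,\Bigr\}.
\]
Similarly, for $d=3$ with $3\nmid\Delta_a$ one finds $\beta_a(3)=1$, the correction factor reduces to $\c L_a/\c A_a>0$, and $\delta_a(b\bmod 3)>0\iff\c A_a(b\bmod 3)>0$; thus $\c S_3=\{1,2\}$ if $a$ is not a cube and $\c S_3=\{2\}$ if $a$ is a cube (equivalently $3\mid h_a$). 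In the first case $\c S_3+\c S_3+\c S_3=\Z/3\Z$, imposing nothing; in the second it equals $\{0\}$, forcing $3\mid n$. The same computation at $d=2$ (relevant only when $2\nmid\Delta_a$) gives $\c S_2=\{1\}$, forcing $n$ to be odd; when $2\mid\Delta_a$ the parity of $n$, and any finer condition modulo $4$ or $8$, is already contained in $\c S_{|\Delta_a|}$.

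It remains to compute the threefold sumset of $\c S_{|\Delta_a|}$. As $\Delta_a$ is a fundamental discriminant, the Kronecker character $\qr{\Delta_a}{\cdot}$ factors through the Chinese Remainder Theorem into local characters at the prime powers dividing $|\Delta_a|$ (odd primes occur to the first power and $\nu_2(\Delta_a)\in\{0,2,3\}$), so $\c S_{|\Delta_a|}$ is a union of product sets, one for each assignment of signs to these local characters with product $-1$, intersected with the constraints $b\not\equiv1\bmod p$ for $p\mid\gcd(|\Delta_a|,h_a)$. Adding three such product sets, and using the Cauchy--Davenport inequality at each odd prime together with the additive structure of quadratic-residue sets and a direct analysis at $2,3,5,7$ and at the $2$-part, one obtains $\c S_{|\Delta_a|}+\c S_{|\Delta_a|}+\c S_{|\Delta_a|}=\Z/|\Delta_a|\Z$ whenever $|\Delta_a|$ has a prime factor exceeding $7$; for the finitely many fundamental discriminants all of whose prime factors are at most $7$ one computes the threefold sumset by hand, the answer depending not only on $\Delta_a$ but also on which of $3,5,7$ divide $h_a$ (which is why the table has several rows for some discriminants). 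Combining these sumsets with the conditions modulo $2$ and $3$ from the previous paragraph produces exactly the special rows of the table; for every remaining $\Delta_a$ the only surviving condition is that $n$ be odd, unless $a$ is a cube with $3\nmid\Delta_a$, in which case one must also impose $3\mid n$, i.e.\ $n\equiv 3\bmod 6$ — these being the last two rows.

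The main obstacle is this last step: one has to verify both that the exhibited list of exceptional pairs (fundamental discriminant, power type of $a$) is complete and that for each such pair the listed congruence classes are precisely the threefold sumset. This is delicate because $\delta_a(b\bmod d)$ does not factorise multiplicatively in $d$ (as remarked after Theorem~\ref{thm:const}), because the extra constraints $b\not\equiv1\bmod p$ for $p\mid\gcd(|\Delta_a|,h_a)$ interact with the character condition — forcing certain sign patterns, and thereby making different power types of $a$ behave differently at a single discriminant — and because one needs a clean uniform sumset estimate at the small primes $2,3,5,7$. The feature that keeps the whole computation finite is the explicit Kronecker-symbol description of $\c S_{|\Delta_a|}$ obtained above.
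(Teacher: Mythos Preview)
Your approach mirrors the paper's closely: specialise Theorem~\ref{thm:const} to the diagonal case, reduce positivity of $\consta(n)$ to positivity of $\sigma_{\b a,n}(|\Delta_a|)$ together with $\sigma_{\b a,n}(3)$ (when $3\nmid\Delta_a$), identify $\c S_{|\Delta_a|}$ via the Kronecker-symbol description, and split into the case where $|\Delta_a|$ has a prime factor exceeding $7$ versus a finite residual check. Your computation of $\c S_{|\Delta_a|}$ and $\c S_3$ is correct and coincides with conditions~\eqref{eq:x_i_conditions} and~\eqref{eq:withf58} in the paper.

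There is, however, a real error in your large-prime step. You assert that $\c S_{|\Delta_a|}+\c S_{|\Delta_a|}+\c S_{|\Delta_a|}=\Z/|\Delta_a|\Z$ whenever $|\Delta_a|$ has a prime factor exceeding $7$, but this fails as soon as $3\mid\gcd(\Delta_a,h_a)$: then every $b\in\c S_{|\Delta_a|}$ satisfies $b\not\equiv 0,1\pmod 3$, hence $b\equiv 2\pmod 3$, and the threefold sumset is contained in the residue class $0\pmod 3$. Thus $\sigma_{\b a,n}(|\Delta_a|)=0$ for every $n\not\equiv 0\pmod 3$, irrespective of large prime factors. The paper records exactly this obstruction as Proposition~\ref{lem:twofoldpart1}\,(2), and states part~(1) only under the extra hypothesis ``$3\nmid\gcd(\Delta_a,h_a)$ or $3\mid n$''. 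Consequently your summary (``the only surviving condition is that $n$ be odd, unless $a$ is a cube with $3\nmid\Delta_a$'') misses the case $3\mid\Delta_a$ and $3\mid h_a$, which still forces $n\equiv 3\pmod 6$ even when $\Delta_a$ has a large prime factor. Beyond this, the paper's proof of part~(1) is a constructive prime-by-prime lift (choose residues at each $p\mid\Delta_a$ freely and then fix the overall Kronecker sign at the largest prime $p'>7$, where Cauchy--Davenport guarantees enough room), rather than a global Cauchy--Davenport estimate; and the finite enumeration for discriminants supported on $\{2,3,5,7\}$ is carried out by computer.
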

Theorem~\ref{propositionjoe} enables one
to
describe
 all large enough integers
having a representation as a sum of $3$ primes with a prescribed primitive root.
One such example is 
Corollary \ref{cor:twentyseven},
whose proof we give now.  
\begin{proof}[\textbf{Proof of 
Corollary \ref{cor:twentyseven}
}\!\!\!\!
]  
If $n$ is  a sum of $3$ odd
primes all of which have primitive root $27$,
we saw in
the first paragraph of our paper that $n$ must be $3\bmod{12}$. 
For the opposite direction we observe that if  
$a=27$ then
we have
$\mathrm{Disc}(\Q(\sqrt{a}))=12$
and $a \in \Z^{3}\setminus (\{-1\}\cup \Z^2)$,
hence alluding to the fifth row
in the table of 
Theorem~\ref{propositionjoe}
we see
that, conditionally 
on HRH$(27)$,
every sufficiently large integer $n\equiv 3 \md{12}$
is a sum of three 
odd
primes with primitive root $27$. 
\end{proof}

\subsection{Structure of the paper}
We study a generalisation of
the ternary Goldbach problem in~\S\ref{s:circle},
where
each of the three primes involved satisfies certain 
splitting conditions in a different
 number field extension of $\Q$.
The main result of~\S\ref{s:circle} is 
Proposition~\ref{prop:grhvino},
whose proof is given in~\S\ref{s:prpr}.

Next,~\S\ref{s:opening}
contains the first steps for the 
combination of Hooley's argument~\cite{hooley} and the Hardy--Littlewood
circle method.
Theorem~\ref{thm:main}
will be proved 
in~\S\ref{proofofee:ggrrhh00},
while 
Theorem~\ref{thm:main_2}
is verified
in~\S\ref{proofofee:ggrrhh11}.

The rest of our 
paper, namely~\S\ref{s:local2glob},
deals with the
`Artin factor'
$\consta(n)$.
The former 
part of 
Theorem~\ref{thm:main},
viz.~\eqref{ee:grhtyuiop},
is verified in~\S\ref{proofofee:grhtyuiop},
while the latter 
part,
viz.~\eqref{th:grhreplaceall},
is established in~\S\ref{proofofth:grhreplaceall}.
 Corollary~\ref{cor:loc_glob}
and
Theorem~\ref{propositionjoe}
are proved in~\S\ref{proofcoro}
and \S\ref{s:simplifying}
respectively.
Finally, we show that  $\consta(n)$
does not factorise as an Euler product
in~\S\ref{s:nonfa}.

\begin{notation}
 The letters 
$p$
and $\ell$ will
always denote a rational 
prime.
The entities
$a_i,h_{a_i},\Delta_{a_i}$ are considered constant 
throughout our work,
thus
the dependence of 
implied constants 
on them
will not be recorded.
On several occasions 
our implied constants are absolute,
this will always be specified.  
Finally, we will use the notation
\[
\e(z) := \exp(2 \pi i z)
\text{ and }  
\e_q(z) := \exp(2 \pi i z/q),
(
z\in \mathbb{C},
q\in \N)
.\]
\end{notation} 
  
\begin{acknowledgements}
This work 
was 
completed while Christopher Frei and Peter Koymans
were visiting the Max Planck Institute in Bonn,
the hospitality of which
is greatly acknowledged.
\end{acknowledgements}

\section{Uniform ternary Goldbach with certain splitting conditions}
\label{s:circle}

In this section the letters $k,k_i$ shall refer exclusively to 
positive square-free
integers.
Recall~\eqref{def:gal01}
and define 
\begin{equation}
\label{def:spl}
\spplit{a,k}
:=\{p  \text{ prime in } \NN
: 
p   \text{ splits completely in }  G_{a,k}\}
.\end{equation}  
We study the asymptotics of the representation function
\begin{equation}
\label{def:vv44}
V_{\b{a},\b{k}}(n)
:=
\sum_{\substack{p_1+p_2+p_3=n\\
\forall i:\ p_i \in\spplit{a_i,k_i}
}
}
\prod_{i=1}^3 \log p_i
.\end{equation}
We will see that the singular series related to the estimation of $V_{\b{a},\b{k}}(n)$ is the series $\mathfrak{S}_{\b{a},\b{k}}(n)$ introduced in \eqref{def:sing09}. Kane~\cite{MR3060874} studied a very general set of problems, one case of which is that 
of evaluating $V_{\b{a},\b{k}}(n)$
asymptotically. 
His work
provides
a function
$f_\b{a}$ 
such that for each 
$B>0$ and 
square-free
$k_1,k_2,k_3$  
we have 
\begin{equation}
\label{eq:kaneinef}
V_{\b{a},\b{k}}(n)
=
\frac{1}{2}
\mathfrak{S}_{\b{a},\b{k}}(n)
n^2
+O_B
\Bigg(
|f_{\b{a}}(\b{k})|
\frac{n^2}{
(\log n)^B}
\Bigg)
,\end{equation}
where the implied constant depends at most on $\b{a}$ and $B$. 
This can be deduced by taking $$N:=n,\ 
X:=n,\  k:=3,\  a_i:=1,\  K_i:=G_{a_i,k_i}\ 
\text{ and }\ 
C_i:=\mathrm{id}_{G_{a_i,k_i}}$$ 
in~\cite[Th.2]{MR3060874}.
With this choice the constant $C_\infty$  in~\cite[Th.2]{MR3060874}
equals $n^2/2$ and a long but straightforward computation allows one to show that 
the `singular series' $\mathfrak{S}_{\b{a},\b{k}}(n)$
can be factored
into the remaining parts
of the main term in the asymptotic formula~\cite[Eq.(1.2)]{MR3060874}.

Our aim in this section is to prove the following result, conditional on the hypothesis $\HRHl(a_i)$ introduced before Theorem \ref{thm:main_2}. It constitutes
a version of~\eqref{eq:kaneinef}
that has a power saving in the error term and an explicit and polynomial dependence on the $k_i$.
As is surely familiar to circle method
experts,
an error term of this quality 
is currently out of reach 
unconditionally even in the setting 
of the
classical
ternary Goldbach problem.
\begin{proposition}
\label{prop:grhvino}
Assume $\HRHl(a_i)$ for $i=1,2,3$.
The following estimate holds for all
square-free
$k_1,k_2,k_3$
with $1\leq k_1,k_2,k_3 \leq n $  
and with an implied constant 
depending at most on $\b{a}$,
\[
V_{\b{a},\b{k}}(n)
=
\frac{1}{2}
\mathfrak{S}_{\b{a},\b{k}}(n)
n^2
+
O\Big(
 n^{11/6}  
(\log n)^6 
\big(\max_{1\leq i \leq 3} k_i\big)^{6} 
\Big)
.\] 
\end{proposition}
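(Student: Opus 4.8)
The plan is to prove Proposition~\ref{prop:grhvino} by the Hardy--Littlewood circle method, carefully tracking the dependence on $k_1, k_2, k_3$ throughout. The starting point is to detect the condition $p_i \in \spplit{a_i, k_i}$ via Chebotarev/Dirichlet characters: since $p$ splits completely in $G_{a,k} = \QQ(\zeta_k, a^{1/k})$ exactly when $\mathrm{Frob}_p$ is trivial, and this field has degree bounded polynomially in $k$, one can write the indicator of $\spplit{a_i, k_i}$ as a weighted sum over Hecke characters (or over residue classes modulo the relevant conductor, together with a character sum coming from the Kummer part). After Fourier-expanding on the circle, we are led to study, for the exponential sum $f_i(\alpha) := \sum_{p \leq n,\ p \in \spplit{a_i,k_i}} (\log p)\, \e(p\alpha)$, the integral $\int_0^1 f_1(\alpha) f_2(\alpha) f_3(\alpha) \e(-n\alpha)\, \d\alpha$. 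The major arcs $\mathfrak{M}$ are the usual ones around $a/q$ with $q \leq Q$ and $Q$ a small power of $n$, and the minor arcs $\mathfrak{m}$ are the complement.

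On the major arcs, I would use $\HRHl(a_i)$ — the Riemann hypothesis for all Hecke $L$-functions of $\QQ(\zeta_k, \sqrt[k]{a})$ — to obtain, via the explicit formula, an asymptotic for $\psi$-type sums over $\spplit{a_i,k_i}$ in arithmetic progressions with a square-root error term; the field has degree $O(k^2)$, so the number of zeros up to height $T$ and the conductor both contribute polynomially in $k$, giving an error of size roughly $n^{1/2} (\log n)^{O(1)} k^{O(1)}$ per prime variable, uniformly. Feeding this into the major-arc integral produces the main term $\tfrac12 \mathfrak{S}_{\b{a},\b{k}}(n) n^2$ — after identifying the arising singular series with $\mathfrak{S}_{\b{a},\b{k}}(n)$ from \eqref{def:sing09}, which is a bookkeeping exercise matching the local densities $1/d_{\b{a},\b{k}}(q)$ and the exponential sums $L_{\b{a},q,\b{k}}(z)$ — plus an error controlled by the product of one square-root-saving factor and two trivial $O(n)$ factors, i.e. $O(n^{3/2} (\log n)^{O(1)} (\max k_i)^{O(1)})$, comfortably within the claimed bound. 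The truncation of the singular series and the passage from the truncated major-arc contribution to the full series $\mathfrak{S}_{\b{a},\b{k}}(n)$ uses the absolute convergence (Lemma~\ref{lem:completion}) together with explicit tail estimates, again polynomial in $k$.

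On the minor arcs, the key input is a Vinogradov-type bound for $f_i(\alpha)$ when $\alpha$ is poorly approximable. Here it is cleanest to bound $f_i(\alpha)$ by first removing the splitting condition: write $\spplit{a_i,k_i}$ as a union of $O(k_i^2)$ residue classes intersected with a Kummer condition, so that $f_i(\alpha)$ is a bounded combination of $O(k_i^2)$ exponential sums over primes in fixed progressions twisted by characters, each of which obeys the classical minor-arc estimate $\ll (n q^{-1/2} + n^{4/5} + n^{1/2} q^{1/2})(\log n)^{O(1)}$ on the Farey dissection. This yields $\sup_{\alpha \in \mathfrak{m}} |f_i(\alpha)| \ll k_i^2\, n^{5/6}(\log n)^{O(1)}$ once $Q$ is chosen as a suitable power of $n$ (so that $n q^{-1/2}$ and $n^{1/2}q^{1/2}$ are both $\leq n^{5/6}$ on the minor arcs). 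Combining $\sup |f_1|$ with $\|f_2\|_2 \|f_3\|_2 \ll n (\log n)^{O(1)}$ via Parseval gives the minor-arc contribution $\ll (\max k_i)^2 n^{5/6} \cdot n (\log n)^{O(1)} = (\max k_i)^2 n^{11/6}(\log n)^{O(1)}$, which matches the error term in the proposition once one is slightly more careful that only a single factor of $(\max k_i)^2$ and six logs survive (the other character sums being absorbed into $L^2$ norms that do not see the modulus). The main obstacle is precisely this minor-arc estimate with \emph{explicit polynomial} dependence on $k_i$ and the \emph{explicit} exponent $11/6$: one must ensure the Chebotarev-to-progressions reduction loses only a polynomial factor, apply Vinogradov's estimate uniformly in the modulus, and choose the Farey parameter $Q$ optimally so that the $n q^{\pm 1/2}$ terms balance at $n^{5/6}$ while keeping $q$-uniformity on the major arcs; handling the Kummer twist (the part genuinely coming from $a^{1/k}$ rather than from roots of unity) without extra loss is the delicate point, and is where $\HRHl$, as opposed to just $\HRH$, does real work.
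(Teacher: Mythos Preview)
Your major-arc plan matches the paper's: under $\HRHl(a_i)$ one gets the asymptotic of Lemma~\ref{lebkov22}, and the major-arc integral yields $\tfrac12\mathfrak{S}_{\b{a},\b{k}}(n)n^2$ with the claimed error.

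The minor-arc plan, however, diverges from the paper and has a real gap. The paper does \emph{not} invoke any Vinogradov-type bilinear estimate. It reuses the \emph{same} $\HRHl$-based approximation on the minor arcs: from Lemma~\ref{lebkov22} and $|S_{a,q,k}(c)|\ll 1$, $[F_{a,q,k}:\QQ]\gg\varphi(q)$, one gets $f_{a,k}(\alpha)\ll n/q + k^2 n^{5/6}(\log n)^2$ whenever $|\alpha-c/q|\leq q^{-1}n^{-2/3}$ and $q\leq n^{2/3}$ (Corollary~\ref{lebkov223}). On the minor arcs $q>n^{1/6}$, so already $f_{a,k}(\alpha)\ll k^2 n^{5/6}(\log n)^2$; Cauchy--Schwarz on the remaining two factors then gives the minor-arc contribution $\ll(\min_i k_i)^2\,n^{11/6}(\log n)^3$. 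In other words, under $\HRHl$ the square-root Chebotarev error is strong enough that the major-arc asymptotic itself serves as the minor-arc bound, and no Vaughan identity ever enters.

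Your alternative --- reduce $\spplit{a_i,k_i}$ to ``$O(k_i^2)$ progressions twisted by characters'' and quote the classical minor-arc bound $\ll(nq^{-1/2}+n^{4/5}+n^{1/2}q^{1/2})(\log n)^{O(1)}$ --- does not work as written. The Kummer condition ``$a$ is a $k'$-th power residue modulo $p$'' is \emph{not} a Dirichlet-character condition over $\QQ$: the characters of order $k'$ that detect it have modulus $p$ and change with $p$. By reciprocity it becomes an abelian condition over $\QQ(\zeta_{k'})$, detectable by Hecke characters there, but then you would need Vinogradov-type bounds for sums $\sum_{\mathfrak{p}}(\log\absnorm\mathfrak{p})\chi(\mathfrak{p})\e(\absnorm\mathfrak{p}\cdot\alpha)$ over degree-one primes of $\QQ(\zeta_{k'})$, with the dependence on $[\QQ(\zeta_{k'}):\QQ]$ made explicit --- this is not the $\QQ$-bound you quoted, and your sketch does not address how to obtain it. Your closing remark that $\HRHl$ ``does real work'' on the Kummer twist is the right instinct but misidentifies the mechanism: $\HRHl$ controls zeros of Hecke $L$-functions and hence Chebotarev error terms; it does not feed into a bilinear-sum argument, but rather \emph{replaces} one, exactly as in the paper's Corollary~\ref{lebkov223}.
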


\subsection{Algebraic considerations}
\label{s:apm}
We shall need explicit bounds for certain algebraic quantities
associated to $G_{a,k}$.  This subsection is mostly devoted to
providing the necessary estimates.

Recall the definitions of $\Delta_a$ and $h_a$,
given in~\eqref{def:del} and~\eqref{def:hdel}.
We begin by determining 
the degree of the number field $F_{a,q, k}$ defined at the start of \S\ref{s:results} 
(see \cite[Lemma 2.3]{moreeprog}).
\begin{lemma}
\label{lem:bythegive}
For $k$  square-free 
let $k'= k/\gcd(k, h_a)$. Then 
$
[F_{a,q, k} : \QQ] = 
k'
\phi([q, k])/
\epsilon(q, k)
$,
where 
\begin{equation*}
\epsilon(q, k)=
  \begin{cases}
    2, &\text{ if } 2 \mid k \ \text{ and } \ \Delta_a\mid [q, k],\\
    1, &\text{ otherwise.}
  \end{cases}
\end{equation*}
\end{lemma}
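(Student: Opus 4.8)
The plan is to compute the degree $[F_{a,q,k}:\QQ]$ by a tower argument, writing $F_{a,q,k} = \QQ(\zeta_q, \zeta_k, a^{1/k}) = \QQ(\zeta_{[q,k]})(a^{1/k})$ and then applying Kummer theory over the cyclotomic base. First I would observe that $\QQ(\zeta_q, \zeta_k) = \QQ(\zeta_m)$ with $m := [q,k]$, so $[\QQ(\zeta_q,\zeta_k):\QQ] = \phi(m)$. It then remains to determine the relative degree $[\QQ(\zeta_m)(a^{1/k}):\QQ(\zeta_m)]$, i.e. the degree of the minimal polynomial of $a^{1/k}$ over $\QQ(\zeta_m)$. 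Since $k \mid m$, the field $\QQ(\zeta_m)$ contains all $k$-th roots of unity, so $\QQ(\zeta_m)(a^{1/k})/\QQ(\zeta_m)$ is a Kummer extension of exponent dividing $k$, and its degree equals the order of the class of $a$ in $\QQ(\zeta_m)^*/(\QQ(\zeta_m)^*)^k$.

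The key arithmetic input is the standard fact (going back to the analysis underlying Hooley's work, and reproduced in \cite{moreeprog}) describing when $a$ becomes a $d$-th power in $\QQ(\zeta_m)$ for $d \mid k$. Writing $a = a_0^{h_a}$ with $a_0$ not a perfect power, one has that $a$ is a $d$-th power in $\QQ(\zeta_m)$ if and only if $d \mid h_a$, \emph{except} for the well-known subtlety at the prime $2$: when $2 \mid k$ and $\sqrt{a} = \sqrt{a_0^{h_a}} \in \QQ(\zeta_m)$, which happens precisely when $\QQ(\sqrt{a}) \subseteq \QQ(\zeta_m)$, i.e. when the fundamental discriminant $\Delta_a$ divides $m = [q,k]$. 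This is the only entanglement between the cyclotomic and Kummer parts, and it is exactly what the factor $\epsilon(q,k)$ records. Combining these, the order of $a$ in $\QQ(\zeta_m)^*/(\QQ(\zeta_m)^*)^k$ is $k/\gcd(k, h_a) = k'$ when $\epsilon(q,k) = 1$, and is halved to $k'/2$ — wait, more precisely the degree is $k'\phi(m)/\epsilon(q,k)$ as claimed, the $\epsilon$ in the denominator accounting for the extra collapse when $2 \mid k$ and $\Delta_a \mid m$.

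I expect the main obstacle to be the careful bookkeeping at the prime $2$: one must check that no other small primes produce spurious relations (this uses that $a_0$ is not a perfect power and the structure of $\QQ(\zeta_m)^*/(\QQ(\zeta_m)^*)^\ell$ for odd $\ell$, where no cyclotomic entanglement occurs because $\QQ(\zeta_m)/\QQ$ is abelian and $a_0$ generates a non-trivial class), and that the $2$-part behaves exactly as stated — in particular that the relevant quadratic subfield of $\QQ(\zeta_m)$ containing $\sqrt{a}$ exists iff $\Delta_a \mid m$, using $\Delta_a = \mathrm{Disc}(\QQ(\sqrt a))$ and the conductor–discriminant description of quadratic subfields of cyclotomic fields. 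Since this is precisely \cite[Lemma 2.3]{moreeprog}, I would either cite it directly or reproduce the short tower-plus-Kummer argument above, noting that the only case requiring attention is $2 \mid k$, where one splits $k = 2^{e}k_{\mathrm{odd}}$ and treats the $2$-Kummer tower $\QQ(\zeta_m) \subset \QQ(\zeta_m, \sqrt a) \subset \cdots \subset \QQ(\zeta_m, a^{1/2^e})$ separately from the odd part.
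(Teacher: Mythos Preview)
The paper does not give its own proof of this lemma; it simply cites \cite[Lemma 2.3]{moreeprog}. Your proposal is correct and is exactly the standard tower-plus-Kummer argument underlying that reference: pass through $\QQ(\zeta_m)$ with $m=[q,k]$, then use Kummer theory together with the fact that for square-free $k$ the only possible entanglement between the radical and cyclotomic parts occurs at the prime $2$, detected by $\Delta_a\mid m$. Your hesitation about ``halved to $k'/2$'' is in fact the right computation: since $a$ is not a square one has $2\nmid h_a$, so $2\mid k$ forces $2\mid k'$, and when $\epsilon(q,k)=2$ the relative degree $[\QQ(\zeta_m)(a^{1/k}):\QQ(\zeta_m)]$ is indeed $k'/2$, yielding the stated total degree $k'\phi(m)/2$. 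Citing Moree directly, as you suggest and as the paper does, is entirely adequate here.
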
 

\begin{lemma}\label{lDisc}
Let $k' = k/\gcd(k, h_a)$ and $a = g_1^{\gcd(k,h_a)} g_2^{k}$, with $g_1$ free of $k'$-th powers. Then 
\begin{equation*}
\frac{
\log\abs{\mathrm{Disc}(F_{a,q, k})}
}{
[F_{a,q, k}:\QQ]
} 
\leq \log k' + \log([q, k]) + 2\log \abs{g_1}
.
\end{equation*}
\end{lemma}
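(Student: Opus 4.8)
The plan is to bound the discriminant of $F_{a,q,k}$ via the conductor-discriminant formula combined with a description of $F_{a,q,k}$ as a compositum of a cyclotomic field $\QQ(\zeta_{[q,k]})$ and the Kummer-type extension $\QQ(a^{1/k})$ (which only depends on $k$ through $a^{1/k}$, equivalently through $g_1^{1/k'}$ after pulling out perfect powers). First I would use the tower $\QQ \subseteq \QQ(\zeta_{[q,k]}) \subseteq F_{a,q,k}$ and the relation $\disc(F_{a,q,k}) = \disc(\QQ(\zeta_{[q,k]}))^{[F_{a,q,k}:\QQ(\zeta_{[q,k]})]} \cdot \nm_{\QQ(\zeta_{[q,k]})/\QQ}(\mathfrak{d}_{F_{a,q,k}/\QQ(\zeta_{[q,k]})})$. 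The cyclotomic part contributes $\log\abs{\disc(\QQ(\zeta_m))} \leq [\QQ(\zeta_m):\QQ]\log m$ for $m = [q,k]$ (a standard bound), which after dividing by $[F_{a,q,k}:\QQ]$ yields a clean $\log([q,k])$ term.

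Next I would bound the relative different $\mathfrak{d}_{F_{a,q,k}/\QQ(\zeta_{[q,k]})}$. Writing $a = g_1^{\gcd(k,h_a)}g_2^k$ with $g_1$ free of $k'$-th powers, one has $F_{a,q,k} = \QQ(\zeta_{[q,k]})(g_1^{1/k'})$, so this relative extension is generated by a root of $X^{k'} - g_1$. The different divides the different of the order $\mathcal{O}_{\QQ(\zeta_{[q,k]})}[g_1^{1/k'}]$, which divides the derivative $k' (g_1^{1/k'})^{k'-1} = k' g_1 / g_1^{1/k'}$; hence $\mathfrak{d}_{F_{a,q,k}/\QQ(\zeta_{[q,k]})}$ divides $(k' g_1)$ as ideals. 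Taking norms, $\log\nm_{\QQ(\zeta_{[q,k]})/\QQ}(\mathfrak{d}) \leq [\QQ(\zeta_{[q,k]}):\QQ]\bigl(\log k' + \log\abs{g_1}\bigr)$, and multiplying by $[F_{a,q,k}:\QQ(\zeta_{[q,k]})] = k'$ (using Lemma \ref{lem:bythegive}, the total degree is $k'\phi([q,k])/\epsilon(q,k)$) and dividing by $[F_{a,q,k}:\QQ]$ gives a contribution bounded by $\log k' + \log\abs{g_1}$.

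Assembling the two pieces produces the bound $\log k' + \log([q,k]) + \log\abs{g_1}$; the extra $\log\abs{g_1}$ (to reach $2\log\abs{g_1}$) is the slack one expects from not tracking the precise contribution of ramified primes dividing $g_1$ or from the crude "different divides the derivative" estimate, so I would simply absorb it there rather than optimise. The main obstacle is being careful that the relevant fields are exactly as claimed — in particular that adjoining $a^{1/k}$ to $\QQ(\zeta_{[q,k]})$ coincides with adjoining $g_1^{1/k'}$, which follows since $g_2^k$ is a perfect $k$-th power and $g_1^{\gcd(k,h_a)}$ becomes a $k'$-th power times an element already built from $g_1^{1/k'}$ — and that the $\epsilon(q,k)$ degree drop from Lemma \ref{lem:bythegive} does not spoil the arithmetic; since $\epsilon(q,k)\geq 1$ it only helps, so no harm is done. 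A secondary technical point is the bound $\log\abs{\disc(\QQ(\zeta_m))}\leq \phi(m)\log m$, which is classical and can be cited directly.
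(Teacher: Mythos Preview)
Your approach is essentially the same as the paper's: both use the tower formula for discriminants over $\QQ\subset\QQ(\zeta_{[q,k]})\subset F_{a,q,k}$, bound the cyclotomic discriminant by $[q,k]^{\phi([q,k])}$, and control the relative different via the derivative $k'\alpha^{k'-1}$ of (a divisor of) the minimal polynomial of $\alpha=g_1^{1/k'}$. Your extra observation that $(k'\alpha^{k'-1})\mid(k'g_1)$ actually yields the slightly sharper bound $\log k'+\log([q,k])+\log|g_1|$, so the ``slack'' you mention is genuine; the only point to tighten in your write-up is that when $\epsilon(q,k)=2$ the minimal polynomial of $\alpha$ over $\QQ(\zeta_{[q,k]})$ has degree $k'/2$ rather than $k'$, but since it still divides $X^{k'}-g_1$ the divisibility $d(\alpha)\mid k'\alpha^{k'-1}$ goes through unchanged (the paper makes exactly this remark).
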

 
\begin{proof}
We have $\abs{\mathrm{Disc}(F_{a,q, k})} = \absnorm(\Delta_{F_{a,q, k}/\QQ(\zeta_{[q, k]})})
 |\mathrm{Disc}(\QQ(\zeta_{[q, k]}))|^{[F_{a,q, k} : \QQ(\zeta_{[q, k]})]}$,
where $\absnorm$ is the absolute norm of an ideal and $\Delta_{F_{a,q, k}/\QQ(\zeta_{[q, k]})}$ is the relative discriminant ideal. Any $k'$-th root  $\alpha\in F_{a,q,k}$ of $g_1$ generates $F_{a,q,k}$ over $\QQ(\zeta_{[q,k]})$, so it's different $d(\alpha)\neq 0$ is in the different ideal of $F_{a,q, k} / \QQ(\zeta_{[q, k]})$. Since the minimal polynomial of $\alpha$ over $\QQ(\zeta_{[q,k]})$ divides $x^{k'}-g_1$, we find that $k'\alpha^{k'-1}$ is a multiple of $d(\alpha)$ in $\mathcal{O}_{F_{a,q,k}}$, and thus in the different ideal as well. Hence, 
\begin{equation*}
\absnorm(\Delta_{F_{a,q, k}/\QQ(\zeta_{[q, k]})}) \leq |\norm_{F_{a,q,k}/\QQ}(k'\alpha^{k' - 1})| \leq (k')^{[F_{a,q,k}:\QQ]} |g_1|^{(k' - 1) \varphi([q, k])}\leq (k')^{[F_{a,q,k}:\QQ]} |g_1|^{2[F_{a,q,k}:\QQ]}.
\end{equation*}
To complete the proof, use $|\mathrm{Disc}(\QQ(\zeta_{[q, k]}))| = [q, k]^{\varphi([q, k])}\prod_{p \mid qk} p^{-\varphi([q, k])/(p-1)}\leq [q, k]^{\varphi([q, k])}$.
\end{proof}

Clearly, the intersection $\QQ(\zeta_q) \cap G_{a,k}$ contains $\QQ(\zeta_{\gcd(q, k)})$.
More precisely, it is determined as follows (see \cite[Lemma 2.4]{moreeprog}). 
 
\begin{lemma}
\label{lem:intersection}
We have
\begin{equation*}
[\QQ(\zeta_q) \cap G_{a,k} : \QQ(\zeta_{\gcd(q, k)})] =
      \begin{cases}
        2 &\text{ if } 2\mid k, \ \Delta_a \nmid k \text{ and } \Delta_a \mid [q, k]\\
        1 &\text{ otherwise.}
      \end{cases}
\end{equation*}
In the first case,  
the integer $\beta_a(q)$ defined in \eqref{def:betain} is a fundamental discriminant and we have
$\QQ(\zeta_q) \cap G_{a,k} = \QQ(\zeta_{\gcd(q, k)},\sqrt{\beta_a(q)})$. 
\end{lemma}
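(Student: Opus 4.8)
\textbf{Proof plan for Lemma~\ref{lem:intersection}.}
The plan is to analyse the abelian extension $\QQ(\zeta_q)\cap G_{a,k}$ via Galois theory, using the two natural ``halves'' of $G_{a,k}=\QQ(a^{1/k},\zeta_k)$, namely the cyclotomic subfield $\QQ(\zeta_k)$ and the Kummer-type piece. First I would observe that the compositum $\QQ(\zeta_q)\cdot G_{a,k}=F_{a,q,k}$, and that $\QQ(\zeta_q)\cap G_{a,k}$ is the largest subfield of $G_{a,k}$ abelian over $\QQ$ and ramified only at primes dividing $q$ in the appropriate sense; more efficiently, I would compute its degree over $\QQ$ from the degree formula $[\QQ(\zeta_q)\,G_{a,k}:\QQ]=[\QQ(\zeta_q):\QQ]\,[G_{a,k}:\QQ]/[\QQ(\zeta_q)\cap G_{a,k}:\QQ]$. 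Since $[\QQ(\zeta_q):\QQ]=\phi(q)$, $[G_{a,k}:\QQ]=[F_{a,k,k}:\QQ]$ is given by Lemma~\ref{lem:bythegive} with the parameter $q$ there replaced by $k$, and $[F_{a,q,k}:\QQ]$ is also given by Lemma~\ref{lem:bythegive}, the index $[\QQ(\zeta_q)\cap G_{a,k}:\QQ(\zeta_{\gcd(q,k)})]$ drops out after dividing by the obvious factor $[\QQ(\zeta_{\gcd(q,k)}):\QQ]=\phi(\gcd(q,k))$ (using $\phi(q)\phi(k)/\phi([q,k])=\phi(\gcd(q,k))$) together with the bookkeeping of the $\epsilon$-factors. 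Tracking when the two $\epsilon$'s differ gives exactly the stated dichotomy: the index is $2$ precisely when $2\mid k$, $\Delta_a\nmid k$, and $\Delta_a\mid[q,k]$, and is $1$ otherwise.

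Granting that the degree is $2$ in the first case, it remains to identify the quadratic subextension of $\QQ(\zeta_{\gcd(q,k)})$ inside $\QQ(\zeta_q)\cap G_{a,k}$ as $\QQ(\sqrt{\beta_a(q)})$. The key structural input is that the only quadratic subfield of $G_{a,k}$ not already contained in $\QQ(\zeta_k)$ is $\QQ(\sqrt a)=\QQ(\sqrt{\Delta_a})$ (this is where $2\mid k$ and $\Delta_a\nmid k$ enter: $a^{1/k}$ generates a genuinely new quadratic piece only through $\sqrt a$, and $\sqrt a\notin\QQ(\zeta_k)$ exactly when $\Delta_a\nmid k$). Therefore $\QQ(\zeta_q)\cap G_{a,k}$, being a quadratic extension of $\QQ(\zeta_{\gcd(q,k)})$ inside $G_{a,k}$ but not inside $\QQ(\zeta_k)$, must be $\QQ(\zeta_{\gcd(q,k)},\sqrt{\Delta_a})$. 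On the other hand it sits inside $\QQ(\zeta_q)$, so the quadratic character cutting it out over $\QQ(\zeta_{\gcd(q,k)})$ is a character of conductor dividing $q$; combining these, the relevant quadratic field is $\QQ(\sqrt{d})$ where $d$ is the discriminant of $\QQ(\sqrt{\Delta_a})$ ``seen modulo $q$'', which is computed by the genus-theory/conductor-discriminant identity to be the fundamental discriminant $\beta_a(q)=(-1)^{(\Delta_a/\gcd(q,\Delta_a)-1)/2}\gcd(q,\Delta_a)$ under the stated parity hypothesis $\Delta_a/\gcd(q,\Delta_a)\equiv1\bmod 2$ (which is automatic here, as I would check, from $\Delta_a\mid[q,k]$ together with $2\mid k$). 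I would then verify directly that $\QQ(\sqrt{\Delta_a})$ and $\QQ(\sqrt{\beta_a(q)})$ become equal after adjoining $\zeta_{\gcd(q,k)}$, i.e.\ that $\Delta_a\beta_a(q)$ is a square times a divisor of $\gcd(q,k)$, which pins down the field as claimed; that $\beta_a(q)$ is a fundamental discriminant under the parity condition is an elementary check on its definition.

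The main obstacle I anticipate is not the degree computation, which is essentially forced by Lemma~\ref{lem:bythegive}, but rather the precise identification of the quadratic subfield: one must be careful that $\QQ(\zeta_q)\cap G_{a,k}$ genuinely equals $\QQ(\zeta_{\gcd(q,k)},\sqrt{\beta_a(q)})$ and not merely that the two have the same degree over $\QQ$, so the argument that $\QQ(\sqrt a)$ is the \emph{only} new quadratic piece of $G_{a,k}$ beyond $\QQ(\zeta_k)$ needs to be made rigorous (e.g.\ by examining the Galois group $\Gal(G_{a,k}/\QQ)$ as an extension of $(\ZZ/k)^*$ by $\ZZ/k'$ and enumerating its index-$2$ subgroups), and one must confirm the conductor computation identifying $\sqrt{\Delta_a}$ over $\QQ(\zeta_{\gcd(q,k)})$ with $\sqrt{\beta_a(q)}$. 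Since this lemma is quoted from \cite[Lemma 2.4]{moreeprog}, I would, in the write-up, either reproduce Moree's short argument or simply cite it, noting that the parity statement about $\beta_a(q)$ follows from $\Delta_a\mid[q,k]$ and $2\mid k$.
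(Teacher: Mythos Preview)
Your proposal is correct, and in fact goes well beyond what the paper does: the paper gives no proof at all for this lemma, simply citing \cite[Lemma~2.4]{moreeprog}. Your degree computation via the compositum formula and Lemma~\ref{lem:bythegive} is sound (the identity $\phi(q)\phi(k)=\phi([q,k])\phi(\gcd(q,k))$ holds, and tracking $\epsilon(q,k)/\epsilon(k,k)$ gives exactly the stated dichotomy, with the case $\epsilon(q,k)=1$, $\epsilon(k,k)=2$ being vacuous since $\Delta_a\mid k$ forces $\Delta_a\mid[q,k]$). Your plan for identifying the quadratic piece is also reasonable, and you correctly flag the one genuinely delicate step---showing that the only quadratic extension of $\QQ(\zeta_{\gcd(q,k)})$ inside $G_{a,k}$ not already in $\QQ(\zeta_k)$ is generated by $\sqrt{\Delta_a}$---as needing a short Galois-group argument. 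Since you already note that one could simply cite Moree, you are aligned with the paper's choice; your additional sketch is a bonus, not a deviation.
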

 
  Since both
$\QQ(\zeta_q)$ and $G_{a,k}$ are normal, the same holds for their
compositum $F_{a,q, k}$. We investigate the existence of certain
elements of the Galois group $\Gal(F_{a,q, k}/\QQ)$. Recall the definitions of $\sigma_b$ and $c_{a,q,k}(b)$ from the start of \S\ref{s:results}.
 
\begin{lemma}
\label{lem:extend_auto}
Let $b\in\ZZ$ with $\gcd(b,q)=1$. The following are equivalent:
\begin{enumerate}
\item there is an automorphism $\sigma \in \Gal(F_{a,q, k}/\QQ)$ with 
\begin{equation}
\label{eq:sigma00restr11cond}
\sigma|_{\QQ(\zeta_q)} = \sigma_b \ \text{ and } \ \sigma |_{G_{a,k}}=\ident_{G_{a,k}},  
\end{equation}
\item $c_{a,q,k}(b) = 1$,
\item with $\beta_a(q)$ defined in \eqref{def:betain}, we have
\begin{align}
&b \equiv 1 \md{\gcd(q, k)}, \quad \text{ and } \label{eq:condtyty1}\\
&2 \mid k, \ \Delta_a \nmid k, \ \Delta_a \mid [q, k] \ \text{ implies that }\ \qr{\beta_a(q)}{b}= 1. \label{eq:condqwerty2}
\end{align}
\end{enumerate}
Moreover, if $\sigma$ as in (1) exists, it is unique and in the center of $\Gal(F_{a,q, k})/\QQ$.
\end{lemma}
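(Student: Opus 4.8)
The plan is to prove the chain of equivalences $(1)\Leftrightarrow(2)\Leftrightarrow(3)$ together with the final uniqueness and centrality assertion. The equivalence $(1)\Leftrightarrow(2)$ is essentially a reformulation: since $F_{a,q,k}$ is the compositum of the two abelian fields $\QQ(\zeta_q)$ and $G_{a,k}$, Galois theory tells us that an element $\sigma\in\Gal(F_{a,q,k}/\QQ)$ is determined by its restrictions to these two subfields, and a pair $(\tau_1,\tau_2)\in\Gal(\QQ(\zeta_q)/\QQ)\times\Gal(G_{a,k}/\QQ)$ arises as such a pair of restrictions if and only if $\tau_1$ and $\tau_2$ agree on the intersection $\QQ(\zeta_q)\cap G_{a,k}$. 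Applying this with $\tau_1=\sigma_b$ and $\tau_2=\ident_{G_{a,k}}$, the compatibility condition reads exactly that $\sigma_b$ restricts to the identity on $\QQ(\zeta_q)\cap G_{a,k}$, which is the definition of $c_{a,q,k}(b)=1$. This same description gives uniqueness of $\sigma$ once it exists, since an element of $\Gal(F_{a,q,k}/\QQ)$ is pinned down by its restrictions to $\QQ(\zeta_q)$ and $G_{a,k}$.

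Next I would handle $(2)\Leftrightarrow(3)$ by unwinding the condition $c_{a,q,k}(b)=1$ using the explicit description of the intersection field from Lemma~\ref{lem:intersection}. That lemma gives two cases. In the generic case (when we are \emph{not} in the situation $2\mid k$, $\Delta_a\nmid k$, $\Delta_a\mid[q,k]$) we have $\QQ(\zeta_q)\cap G_{a,k}=\QQ(\zeta_{\gcd(q,k)})$, and $\sigma_b$ restricts to the identity on $\QQ(\zeta_{\gcd(q,k)})$ if and only if $b\equiv 1\bmod{\gcd(q,k)}$; here condition \eqref{eq:condqwerty2} is vacuously satisfied, so $(3)$ reduces to \eqref{eq:condtyty1} as desired. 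In the remaining case $\QQ(\zeta_q)\cap G_{a,k}=\QQ(\zeta_{\gcd(q,k)},\sqrt{\beta_a(q)})$, so $\sigma_b$ is the identity on this field precisely when it fixes $\QQ(\zeta_{\gcd(q,k)})$ \emph{and} fixes $\sqrt{\beta_a(q)}$; the first is again $b\equiv 1\bmod{\gcd(q,k)}$, and the second, since $\sigma_b$ acts on $\sqrt{\beta_a(q)}$ through the quadratic character attached to the fundamental discriminant $\beta_a(q)$, is equivalent to $\qr{\beta_a(q)}{b}=1$. This matches \eqref{eq:condqwerty2}. The one genuinely delicate point here is the identification of the action of $\sigma_b$ on $\QQ(\sqrt{\beta_a(q)})\subseteq\QQ(\zeta_q)$ with the Kronecker symbol $\qr{\beta_a(q)}{b}$: this uses that $\beta_a(q)$ is a fundamental discriminant (guaranteed by Lemma~\ref{lem:intersection}) so that $\QQ(\sqrt{\beta_a(q)})$ sits inside $\QQ(\zeta_{|\beta_a(q)|})\subseteq\QQ(\zeta_q)$, and the standard fact that the restriction of $\sigma_b$ to this quadratic subfield is multiplication by the Kronecker symbol. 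I expect this quadratic-reciprocity bookkeeping to be the main obstacle, mostly in making sure the conductor of $\beta_a(q)$ divides $q$ so that $\sigma_b$ indeed acts on $\sqrt{\beta_a(q)}$.

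Finally, for centrality of $\sigma$ in $\Gal(F_{a,q,k}/\QQ)$: take any $\rho\in\Gal(F_{a,q,k}/\QQ)$ and compare $\rho\sigma\rho^{-1}$ with $\sigma$ on the two generating subfields $\QQ(\zeta_q)$ and $G_{a,k}$. On $G_{a,k}$ both are the identity since $\sigma|_{G_{a,k}}=\ident$ and $G_{a,k}$ is normal, so $\rho\sigma\rho^{-1}$ also fixes $G_{a,k}$ pointwise. On $\QQ(\zeta_q)$ the restriction map to $\Gal(\QQ(\zeta_q)/\QQ)$ is a homomorphism onto an abelian group, so $(\rho\sigma\rho^{-1})|_{\QQ(\zeta_q)}=\sigma|_{\QQ(\zeta_q)}=\sigma_b$. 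Since an element of $\Gal(F_{a,q,k}/\QQ)$ is determined by these two restrictions, $\rho\sigma\rho^{-1}=\sigma$, proving $\sigma$ is central. This closes the proof; all of it is routine Galois theory of compositums of abelian fields, with the only care needed in the cyclotomic description of the quadratic character in the $(2)\Leftrightarrow(3)$ step.
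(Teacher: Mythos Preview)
Your proof is correct and follows essentially the same route as the paper's: the fiber-product description of $\Gal(F_{a,q,k}/\QQ)$ gives $(1)\Leftrightarrow(2)$ together with uniqueness and centrality, and Lemma~\ref{lem:intersection} unwinds $(2)\Leftrightarrow(3)$ via the two cases for the intersection field, with the Kronecker-symbol identification (handled in the paper by citing \cite[Lemma 2.2]{moreeprog}) as the one nontrivial input. One small slip: you call $G_{a,k}$ an ``abelian field'', but $\Gal(G_{a,k}/\QQ)$ is generally non-abelian; what you actually use (and what suffices) is that $G_{a,k}$ is \emph{normal} over $\QQ$, so the compositum is Galois and the restriction map into the product of Galois groups has the usual fiber-product image.
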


\begin{proof}
Write $I := \QQ(\zeta_q) \cap G_{a,k}$. The map $\sigma \mapsto (\sigma|_{\QQ(\zeta_q)},\sigma|_{G_{a,k}})$ provides an isomorphism  
\begin{equation*}
\Gal(F_{a,q, k}/\QQ) 
\cong 
\{(\sigma_1,\sigma_2) \in \Gal(\QQ(\zeta_q)/\QQ) \times \Gal(G_{a,k}/\QQ)\where \sigma_1|_I=\sigma_2|_I\}.
\end{equation*}
Thus, an automorphism $\sigma$ with \eqref{eq:sigma00restr11cond} exists if and only if $c_{a,q,k}(b)=1$, proving the equivalence of \emph{(1)} and \emph{(2)}.
In this case $\sigma$ is necessarily unique and clearly in the center of $\Gal(F_{a,q, k}/\QQ)$, because the Galois group $\Gal(\QQ(\zeta_q)/\QQ)$ is abelian and $\ident_{G_{a,k}}$ is in the center of $\Gal(G_{a,k}/\QQ)$. Thus, let us study the conditions under which $c_{a,q,k}(b)=1$.

Since $\QQ(\zeta_{\gcd(q, k)}) \subset I$ and $\sigma_b|_{\QQ(\zeta_{\gcd(q, k)})}$ coincides with the automorphism
$\zeta \mapsto \zeta^{b \md{\gcd(q, k)}}$, the condition \eqref{eq:condtyty1} is clearly necessary. Thus, we assume it to hold from now on, whence $\sigma_b|_{\QQ(\zeta_{\gcd(q, k)})} = 
\ident_{G_{a,k}}$. 
If the antecedent in \eqref{eq:condqwerty2} is false, then we have $I = \QQ(\zeta_{\gcd(q, k)})$ by Lemma \ref{lem:intersection}, and thus $c_{a,q,k}(b)=1$. If the antecedent in \eqref{eq:condqwerty2} holds, then, invoking Lemma \ref{lem:intersection} once more, we find that $\sqrt{\beta_a(q)}\in \QQ(\zeta_q)$ and $c_{a,q,k}(b)=1$ is equivalent to
\begin{equation}\label{eq:restrQquadratic}
\sigma_b(\sqrt{\beta_a(q)}) = \sqrt{\beta_a(q)}.
\end{equation}
Since $\beta_a(q)$ is a fundamental discriminant, we may invoke \cite[Lemma 2.2]{moreeprog} to see that \eqref{eq:restrQquadratic} is equivalent to $\qr{\beta_a(q)}{b} = 1$.
\end{proof}
 
\subsection{Consequences of $\HRHl(a)$}
\label{s:drawing}
In this section we 
use the hypothesis $\HRHl(a)$
to provide estimates for certain 
exponential sums
related to the estimation of
$V_{\b{a},\b{k}}(n)$.

\begin{lemma}
\label{lebkov666}
Assume $\HRHl(a)$. For any square-free $k$
and coprime integers $c,q$ 
we have
\[
\sum_{\substack{p \leq x \\ p \in 
\spplit{a,k} 
}} (\log p) \e_q(cp) = 
\frac{x}{\varphi(q) [G_{a,k}: \mathbb{Q}]} 
\sum_{\substack{\chi \md q \\ \chi \circ \absnorm = \chi_0}} \overline{\chi(c)} \tau(\chi) + O(k^2
 \sqrt{qx} 
(\log qx)^2).
\]
Here, $\chi$ runs through all Dirichlet characters modulo $q$ for which $\chi\circ\absnorm$, considered as a ray class character modulo $q\mathcal{O}_{G_{a,k}}$, is the trivial ray class character $\chi_0$. Moreover, $\tau(\chi)$ denotes the Gauss sum
$\tau(\chi) = \sum_{y \md{q}} \chi(y) \e_q(y)$.
\end{lemma}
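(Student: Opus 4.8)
The plan is to reduce the prime-counting sum over $\spplit{a,k}$ to a sum over ideals of $\fo_{G_{a,k}}$ of norm $p$ (equivalently, over split primes with a chosen prime above them) and then to run a standard explicit-formula argument for each ray class character modulo $q\fo_{G_{a,k}}$, using $\HRHl(a)$ to push the zeros onto the critical line. Concretely, I would first observe that a rational prime $p$ splits completely in $G_{a,k}$ precisely when $p$ is unramified there and the Frobenius is trivial, and that for such $p$ there are exactly $[G_{a,k}:\QQ]$ prime ideals $\fp\mid p$, each of residue degree $1$, so
\[
\sum_{\substack{p\leq x\\ p\in\spplit{a,k}}}(\log p)\,\e_q(cp)
=\frac{1}{[G_{a,k}:\QQ]}\sum_{\substack{\fp\subseteq\fo_{G_{a,k}}\\ \absnorm\fp\leq x,\ \deg\fp=1}}(\log\absnorm\fp)\,\e_q(c\,\absnorm\fp)
+O\big((\log x)\cdot\#\{\text{ramified }p\}+x^{1/2}\log x\big),
\]
where the error absorbs ramified primes (there are $O(\log(k\cdot\mathrm{Disc}))$ of them, hence $O(k^2)$ crudely) and the prime ideals of degree $\geq 2$, whose norms $\leq x$ number $O(x^{1/2}\log x)$. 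The term $\e_q(c\,\absnorm\fp)$ depends only on $\absnorm\fp\bmod q$, so I would detect the residue class with Dirichlet characters: $\e_q(cp)=\sum_{y\bmod q}\e_q(cy)\frac{1}{\varphi(q)}\sum_{\chi\bmod q}\chi(y)\overline{\chi}(p)$, giving after rearranging $\e_q(c\,\absnorm\fp)=\frac{1}{\varphi(q)}\sum_{\chi\bmod q}\overline{\chi}(\absnorm\fp)\tau(\overline{\chi})$, or more conveniently the adjoint identity $\sum_y\chi(y)\e_q(cy)=\overline{\chi}(c)\tau(\chi)$ when $(c,q)=1$.

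Next I would pull the character $\overline{\chi}\circ\absnorm$ inside as a ray class character $\psi_\chi$ modulo $q\fo_{G_{a,k}}$ and apply the explicit formula / Perron-type estimate for $\sum_{\absnorm\fp\leq x}(\log\absnorm\fp)\psi_\chi(\fp)$ over prime ideals of the number field $L:=G_{a,k}$. For the trivial ray class character $\chi_0$ this sum is $x+O(\text{error})$ by the prime ideal theorem; for nontrivial $\psi_\chi$ it is $O(\text{error})$. Under $\HRHl(a)$ every Hecke $L$-function $L(s,\psi_\chi)$ of $L$ has all nontrivial zeros on $\Re s=1/2$, so the standard contour argument (à la the GRH-conditional proof of the prime number theorem in arithmetic progressions, extended to number fields) yields an error term of the shape $O\big(x^{1/2}(\log x + \log\mathfrak{q})^2\big)$ where $\mathfrak{q}$ is the analytic conductor of $\psi_\chi$; here $\mathfrak{q}\ll \absnorm(q\fo_L)\cdot|\mathrm{Disc}(L)|\ll q^{[L:\QQ]}\cdot|\mathrm{Disc}(L)|$. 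The degree $[L:\QQ]=[G_{a,k}:\QQ]\ll k\varphi(k)$ and $\log|\mathrm{Disc}(G_{a,k})|\ll [L:\QQ](\log k)$ by Lemma~\ref{lDisc}, so $\log\mathfrak{q}\ll k^2\log(qx)$; summing over the $\leq\varphi(q)\leq q$ relevant characters and dividing by $[G_{a,k}:\QQ]$ gives a total error of size $O\big(\sqrt{qx}\,k^2(\log qx)^2\big)$ after the trivial bound $\sum_\chi 1\cdot x^{1/2}(\log\mathfrak{q})^2\ll q^{1/2}\cdot q^{1/2}x^{1/2}k^4(\log qx)^2$ — one must be slightly careful here to get only $k^2$ rather than $k^4$, which I would arrange by noting that the number of $\chi$ with $\chi\circ\absnorm=\chi_0$ that contribute is $[I:\QQ]\leq 2$ in the main term but that for the error one estimates all characters and the $\log\mathfrak{q}$ already contains the $k$-dependence once, so a more careful bookkeeping (e.g. using $\log\mathfrak{q}\ll \log q + k\log(k x)$ and Cauchy–Schwarz on the $\chi$-sum, or appealing to a large-sieve-type mean value) reduces the exponent; in fact the crude $\log\mathfrak{q}\ll \log(qx) + [L:\QQ]\log k\ll k^2\log(qx)$ inside one factor and the other factor being just $\log(qx)$ already gives $O(\sqrt{qx}\,k^2(\log qx)^2)$ per character without squaring the conductor term.

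The main obstacle will be the uniformity in $k$: one must track how the degree, discriminant, and hence analytic conductor of $G_{a,k}$ grow, and verify that the number-field explicit formula — and in particular the contribution of the archimedean places and of $\log|\mathrm{Disc}(L)|$ to the counting function of zeros in a bounded box — contributes no worse than the claimed $k^2\sqrt{qx}(\log qx)^2$. I would handle this by invoking a clean, conductor-explicit form of the conditional prime ideal theorem (e.g. the bound $\psi_L(x,\psi)=\delta_{\psi=\chi_0}x+O\big(x^{1/2}(\log x)(\log x+\log\mathfrak{q})\big)$ valid under GRH for $L(s,\psi)$, which follows from the Riemann–von Mangoldt zero-counting estimate $N_L(T,\psi)\ll T\log(\mathfrak{q}T)+\log\mathfrak{q}$), combined with Lemma~\ref{lDisc} to bound $\log\mathfrak{q}$, and finally Lemma~\ref{lem:extend_auto}/Lemma~\ref{lem:intersection} to identify which $\chi$ satisfy $\chi\circ\absnorm=\chi_0$ so that the main term has exactly the stated shape $\frac{x}{\varphi(q)[G_{a,k}:\QQ]}\sum_{\chi:\,\chi\circ\absnorm=\chi_0}\overline{\chi(c)}\tau(\chi)$. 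Everything else — partial summation to remove the $\log$ weights on $\absnorm\fp$, discarding prime powers and ramified primes, and the switch between $\e_q(cp)$ and characters — is routine.
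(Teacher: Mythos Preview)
Your approach is essentially identical to the paper's: expand $\e_q(cp)$ via Dirichlet characters and Gauss sums, convert the split-prime sum to a sum of $\Lambda(\fn)(\chi\circ\absnorm)(\fn)$ over ideals of $G_{a,k}$, and apply the GRH-conditional explicit formula (the paper cites Iwaniec--Kowalski, Theorem~5.15) together with Lemma~\ref{lDisc} for the conductor bound. Your worry about $k^4$ versus $k^2$ is unfounded: the factor $[G_{a,k}:\QQ]$ coming from $\log\fq(\chi)$ cancels against the $1/[G_{a,k}:\QQ]$ in the ideal-to-prime conversion, and the final $k^2$ in the error arises only from the crude bound $[G_{a,k}:\QQ]\leq k^2$ applied once at the end.
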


\begin{proof}
We have
\begin{equation}
\label{eFA}
\sum_{\substack{p \leq x \\ p \in \spplit{a,k}}} 
(\log p) \e_q(cp) = 
\sum_{\substack{p \leq x,  p\nmid q \\ p \in 
\spplit{a,k}
}} (\log p)\e_q(cp) + O((\log q)^2).
\end{equation}
Bringing into play the Dirichlet characters modulo $q$
allows us to inject, for $p\nmid q$,
\[
\e_q(cp)
= \frac{1}{\varphi(q)} \sum_{b \md{q}} \sum_{\chi \md{q}} \chi(b) \overline{\chi(cp)} 
\e_q(b)
= \frac{1}{\varphi(q)} \sum_{\chi \md{q}} \overline{\chi(cp)} \tau(\chi)
\]
into~\eqref{eFA},
thus acquiring the validity of 
\begin{equation}
\label{eNF}
\sum_{\substack{p \leq x \\ p \in 
\spplit{a,k} }} (\log p)\e_q(cp)
= \frac{1}{\varphi(q)} \sum_{\chi \md{q}} \overline{\chi(c)} \tau(\chi) 
\psi_{a,k}(x, \overline{\chi}) + O((\log q)^2),
\end{equation}
where  
\begin{align*}
\psi_{a,k}(x, \chi) 
&:= \sum_{\substack{p \leq x \\ p \in \spplit{a,k}  
}} (\log p) \chi(p) 
= \frac{1}{[G_{a,k} : \mathbb{Q}]} \sum_{\substack{\absnorm\mathfrak{p} \leq x \\ \deg(\mathfrak{p}) = 1}} 
(\log\absnorm\mathfrak{p}) \chi(\absnorm\mathfrak{p})\\
&= \frac{1}{[G_{a,k} : \mathbb{Q}]} \sum_{\absnorm\mathfrak{n} \leq x} 
\Lambda(\mathfrak{n}) \chi(\absnorm\mathfrak{n}) + O(\sqrt{x} \log x).
\end{align*}
Here and for the rest of this section $\mathfrak{p}$ denotes a prime ideal in $\mathcal{O}_{G_{a,k}}$, $\deg(\mathfrak{p})$ denotes its inertia degree over $\QQ$, $\mathfrak{n}$ denotes an ideal in $\mathcal{O}_{G_{a,k}}$, and $\Lambda$ is the von Mangoldt function on ideals of $\mathcal{O}_{G_{a,k}}$, defined by $\Lambda(\mathfrak{p}^e):=\log\absnorm\mathfrak{p}$ for $e\geq 1$ and $\Lambda(\mathfrak{n}):=0$ in all other cases. Observing that $\chi\circ\absnorm$ defines a character of the ray class group of $G_{a,k}$ modulo $q\mathcal{O}_{G_{a,k}}$, we consider its Hecke $L$-function,
\[
L(s, \chi) := \sum_{\mathfrak{n} \neq 0} \chi(\absnorm\mathfrak{n}) (\absnorm\mathfrak{n})^{-s}
.\]
It is now easy to see that 
$
-L'(s, \chi)/L(s, \chi) = \sum_{\mathfrak{n} \neq 0} \Lambda(\mathfrak{n}) \chi(\absnorm\mathfrak{n}) (\absnorm\mathfrak{n})^{-s}
$.
The Ramanujan--Petersson conjecture is obviously true for $L(s, \chi)$, since it is true for any Hecke $L$-function. Hence Theorem 
$5.15$ from \cite{MR2061214}
implies that
\[
\sum_{\absnorm\mathfrak{n} \leq x} \Lambda(\mathfrak{n}) \chi(\absnorm\mathfrak{n}) = r_\chi x + O(x^{\frac{1}{2}} (\log x) \log(x^{[G_{a,k} : \mathbb{Q}]} \mathfrak{q}(\chi))),
\]
where $r_\chi$ is the order of the pole of $L(s, \chi)$ at $s = 1$. For the definition of $\mathfrak{q}(\chi)$, see page $95$ of \cite{MR2061214}. 
Furthermore, on page 129 of \cite{MR2061214} it is proven that
$\mathfrak{q}(\chi) \leq 4^{[G_{a,k} : \mathbb{Q}]} |\mathrm{Disc}(G_{a,k})| q^{[G_{a,k} : \mathbb{Q}]}$.
Our next task is to make explicit the value of $r_\chi$. 
If $\chi \circ \absnorm$ is the trivial ray class character $\chi_0$ modulo $\mathcal{O}_{G_{a,k}}$, then we have $r_\chi = 1$; otherwise we have $r_\chi = 0$. Using $|\tau(\chi)| \leq \sqrt{q}$ and Lemma \ref{lDisc} we can substitute in (\ref{eNF}) to find that
\[
\frac{1}{\varphi(q)} \sum_{\chi \md{q}} \overline{\chi(c)} \tau(\chi) \psi_{a,k}(x, \overline{\chi}) = \frac{x\varphi(q)^{-1}
}{
[G_{a,k} : \mathbb{Q}]} \sum_{\substack{\chi \md{q} \\ \chi \circ \absnorm = \chi_0}} \overline{\chi(c)} \tau(\chi) + O([G_{a,k}
: \mathbb{Q}] \sqrt{qx} (\log qx)^2),
\]
thus concluding our proof upon observing that 
$
 [G_{a,k}:\Q]=
 [F_{a,k,k}:\Q]
\leq
k^2$.
\end{proof}

Although it is possible to directly evaluate the main term in Lemma \ref{lebkov666}, 
we will instead use the following trick.

 \begin{lemma}
\label{lTrick}
Under the same conditions as in Lemma~\ref{lebkov666} we have
\[
\sum_{\substack{p \leq x \\ p \in \spplit{a,k}}} 
(\log p) \e_q(cp) 
= \frac{S_{a,q, k}(c) }{[F_{a,q, k} : \mathbb{Q}]} x
+ o_{q, k}(x), \text{ as } x\to+\infty.  
\]
\end{lemma}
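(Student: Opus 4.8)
The plan is to sidestep the explicit evaluation of the character sum in the main term of Lemma~\ref{lebkov666} by reorganising the left‑hand side as a combination of Chebotarev counts in the compositum $F_{a,q, k}=\Q(\zeta_q,\zeta_k,a^{1/k})$. First I would split the sum over $p$ according to the residue class $b$ of $p$ modulo $q$; the primes dividing $q$ contribute only $O((\log q)^2)=o_{q, k}(x)$, so it suffices to prove that for each $b\in(\Z/q\Z)^*$,
\[
\sum_{\substack{p\leq x,\ p\equiv b \bmod{q}\\ p\in\spplit{a,k}}}\log p
= c_{a,q, k}(b)\,\frac{x}{[F_{a,q, k}:\Q]} + o_{q, k}(x).
\]
Indeed, multiplying this identity by $\e_q(cb)$ and summing over $b\in(\Z/q\Z)^*$ reconstitutes $\sum_b c_{a,q, k}(b)\e_q(cb)=S_{a,q, k}(c)$ by the definition~\eqref{def:exp99}, leaving exactly the main term $S_{a,q, k}(c)\,x/[F_{a,q, k}:\Q]$ demanded by the lemma.

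To prove the displayed identity I would convert the two conditions on $p$ into a single Frobenius condition. A prime $p$ with $\gcd(p,q)=1$ that splits completely in $G_{a,k}$ is unramified both in $\Q(\zeta_q)$ and in $G_{a,k}$, hence in their compositum $F_{a,q, k}$, and its Frobenius $\mathrm{Frob}_p\in\Gal(F_{a,q, k}/\Q)$ restricts to $\ident_{G_{a,k}}$ on $G_{a,k}$ and to $\sigma_b$ on $\Q(\zeta_q)$; conversely a prime whose Frobenius has these two restrictions splits completely in $G_{a,k}$ and lies in the residue class of $b$ modulo $q$. By the isomorphism $\Gal(F_{a,q, k}/\Q)\cong\{(\sigma_1,\sigma_2):\sigma_1|_I=\sigma_2|_I\}$, with $I=\Q(\zeta_q)\cap G_{a,k}$, recalled in the proof of Lemma~\ref{lem:extend_auto}, an automorphism with these restrictions is unique if it exists, and by Lemma~\ref{lem:extend_auto} it exists precisely when $c_{a,q, k}(b)=1$; call it $\sigma$, a central element of $\Gal(F_{a,q, k}/\Q)$. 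Therefore the sum on the left is empty when $c_{a,q, k}(b)=0$, and equals $\sum_{p\leq x,\ \mathrm{Frob}_p=\sigma}\log p$ when $c_{a,q, k}(b)=1$. Note that every prime entering these residue‑class sums is genuinely unramified in $F_{a,q, k}$ — this is where $\gcd(b,q)=1$ is used — so no stray ramified primes disturb the correspondence $p\leftrightarrow\mathrm{Frob}_p$.

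Finally I would invoke the Chebotarev density theorem for $F_{a,q, k}/\Q$: since $\sigma$ is central, its conjugacy class is the singleton $\{\sigma\}$, whence $\sum_{p\leq x,\ \mathrm{Frob}_p=\sigma}(\log p)=x/[F_{a,q, k}:\Q]+o_{q, k}(x)$; this is all that is required, and only the prime ideal theorem (without any Riemann hypothesis) is needed at this step. Combining this with the previous paragraph yields the displayed identity, and summing over $b$ proves the lemma. The argument is, once Lemma~\ref{lem:extend_auto} is in hand, largely bookkeeping, and the only delicate point is precisely the identification of the primes counted by each residue‑class sum with a single Chebotarev class of $F_{a,q, k}$. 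As a consistency check one could instead retain the main term furnished by Lemma~\ref{lebkov666} and verify by hand that $\varphi(q)^{-1}[G_{a,k}:\Q]^{-1}\sum_{\chi\circ\absnorm=\chi_0}\overline{\chi(c)}\tau(\chi)$ equals $S_{a,q, k}(c)/[F_{a,q, k}:\Q]$, using that the $\chi$ with $\chi\circ\absnorm=\chi_0$ are exactly those trivial on $H:=\{b\in(\Z/q\Z)^*:c_{a,q, k}(b)=1\}$ together with $[F_{a,q, k}:\Q]=[G_{a,k}:\Q]\cdot\#H$ — but this is the very computation the trick is meant to circumvent.
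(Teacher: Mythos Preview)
Your proof is correct and follows essentially the same approach as the paper: split the exponential sum into residue classes modulo $q$, invoke Lemma~\ref{lem:extend_auto} to identify the conditions ``$p\equiv b\bmod q$ and $p\in\spplit{a,k}$'' with a single (central) Frobenius class in $\Gal(F_{a,q,k}/\Q)$ when $c_{a,q,k}(b)=1$ and with the empty set otherwise, and then apply Chebotarev. The paper's proof is more terse but logically identical; your additional remarks on unramifiedness and the closing consistency check are correct but not needed.
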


\begin{proof}
Partitioning 
in progressions modulo $q$
we see that, owing to~\eqref{eFA}, 
the 
sum over $p$ in our lemma
is equal to the following quantity up to an error of size
$o_{q, k}(x)$, 
\[
\sum_{b \in (\Z/q\Z)^*}
\e_q(bc)
\sum_{\substack{p \leq x \\ p \equiv b\md{q} \\ p \in \spplit{a,k}}} \log p.
\]
By Lemma \ref{lem:extend_auto} there exists an automorphism $\sigma$ of $F_{a,q, k}$ satisfying
\[
\sigma|_{\QQ(\zeta_q)} = \sigma_b \ \text{ and } \ \sigma|_{G_{a,k}} = \ident_{G_{a,k}}
\]
if and only if $c_{a,q, k}(b) = 1$. Furthermore, if such an automorphism exists, it is unique. The lemma is now a consequence of Chebotarev's density theorem.
\end{proof}

Combining Lemma \ref{lebkov666} and Lemma \ref{lTrick}  
proves the following 
lemma.

\begin{lemma}
\label{l1}
Under the same assumptions as in Lemma~\ref{lebkov666} we have
\[
\sum_{\substack{p \leq x \\ p \in \spplit{a,k}}} (\log p) 
\e_q(cp) 
= \frac{S_{a,q, k}(c) x}{[F_{a,q, k} : \mathbb{Q}]} + O(k^2 \sqrt{qx} \log ^2qx).
\]
\end{lemma}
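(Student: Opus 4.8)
The plan is to play the two asymptotic expansions furnished by Lemma~\ref{lebkov666} and Lemma~\ref{lTrick} against each other, using the elementary fact that a real function admitting an expansion of the shape $cx+o(x)$ as $x\to+\infty$ has its leading coefficient $c$ uniquely determined. In this way we extract an exact identity between the character-sum main term of Lemma~\ref{lebkov666} and the quantity $S_{a,q,k}(c)/[F_{a,q,k}:\mathbb{Q}]$, and then feed that identity back into Lemma~\ref{lebkov666} to obtain the desired estimate with its explicit power-saving error.

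Concretely, I would first invoke Lemma~\ref{lebkov666}, which under $\HRHl(a)$ gives
\[
\sum_{\substack{p \leq x \\ p \in \spplit{a,k}}} (\log p)\e_q(cp)
= \frac{x}{\varphi(q)[G_{a,k}:\mathbb{Q}]}\sum_{\substack{\chi \md{q} \\ \chi\circ\absnorm = \chi_0}} \overline{\chi(c)}\,\tau(\chi) + O\big(k^2\sqrt{qx}(\log qx)^2\big),
\]
and then invoke Lemma~\ref{lTrick}, which expresses the same sum as $S_{a,q,k}(c)\,x/[F_{a,q,k}:\mathbb{Q}] + o_{q,k}(x)$. For each fixed pair $(q,k)$ the error term $O(k^2\sqrt{qx}(\log qx)^2)$ is itself $o_{q,k}(x)$, since $\sqrt{qx}(\log qx)^2 = o(x)$; hence dividing both expansions by $x$ and letting $x\to+\infty$ forces the leading coefficients to agree, i.e.
\[
\frac{1}{\varphi(q)[G_{a,k}:\mathbb{Q}]}\sum_{\substack{\chi \md{q} \\ \chi\circ\absnorm = \chi_0}} \overline{\chi(c)}\,\tau(\chi) = \frac{S_{a,q,k}(c)}{[F_{a,q,k}:\mathbb{Q}]}.
\]

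Substituting this identity back into the formula of Lemma~\ref{lebkov666} replaces the character-sum main term by $S_{a,q,k}(c)\,x/[F_{a,q,k}:\mathbb{Q}]$ while leaving the error term $O(k^2\sqrt{qx}\log^2 qx)$ untouched, which is precisely the assertion of the lemma. There is essentially no obstacle here; the only points deserving a moment's care are checking that both Lemma~\ref{lebkov666} and Lemma~\ref{lTrick} are applied under the same standing hypotheses (square-free $k$, coprime $c$ and $q$, and $\HRHl(a)$), and observing that the comparison of asymptotics is legitimate because, for each fixed $(q,k)$, the error in Lemma~\ref{lebkov666} is genuinely of lower order than the main term.
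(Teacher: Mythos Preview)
Your proposal is correct and follows exactly the approach the paper intends: the paper's own proof is the single sentence ``Combining Lemma~\ref{lebkov666} and Lemma~\ref{lTrick} proves the following lemma,'' and you have spelled out precisely what this combination means --- comparing the two asymptotics for fixed $(q,k)$ to identify the leading constants, then substituting back into the estimate with the explicit error term. The paper even flags this manoeuvre in advance (``Although it is possible to directly evaluate the main term in Lemma~\ref{lebkov666}, we will instead use the following trick''), so your write-up matches both the method and its spirit.
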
 

Define for a square-free integer 
$k > 0$ the exponential sum 
\begin{equation}
\label{def:thesum}
f_{a,k}(\alpha) = \sum_{\substack{p \leq n \\ p \in \spplit{a,k}
}} (\log p) \e( \alpha p ),
\ (\alpha \in \R).
\end{equation} 

The next lemma 
is easily
proved 
via partial summation 
and Lemma \ref{l1}.

\begin{lemma}
\label{lebkov22}
Assume $\HRHl(a)$. Let $k$ be square-free integer and define 
$\alpha = c/q + \beta$, where $(c, q) = 1$. Then
\[
f_{a,k}(\alpha) = \frac{S_{a,q, k}(c)}{[F_{a,q, k} : \mathbb{Q}]} \int_0^n \e(\beta x) 
\mathrm{d}x
 + O\left(k^2(1 + |\beta| n) \sqrt{q n} (\log qn)^2
\right).
\]
\end{lemma}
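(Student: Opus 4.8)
The plan is to run Abel (partial) summation against Lemma~\ref{l1}. Writing $\e(\alpha p)=\e_q(cp)\e(\beta p)$, I would set
\[
T(x):=\sum_{\substack{p\leq x\\ p\in\spplit{a,k}}}(\log p)\,\e_q(cp),
\]
so that, since $T$ vanishes on $[0,2)$, partial summation gives
\[
f_{a,k}(\alpha)=\sum_{\substack{p\leq n\\ p\in\spplit{a,k}}}(\log p)\,\e_q(cp)\,\e(\beta p)=T(n)\e(\beta n)-2\pi i\beta\int_0^n T(x)\e(\beta x)\,\d x.
\]
By Lemma~\ref{l1} we have $T(x)=Mx+E(x)$ with $M:=S_{a,q,k}(c)/[F_{a,q,k}:\Q]$ and $E(x)\ll k^2\sqrt{qx}\,(\log qx)^2$.

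For the main term I would substitute $T(x)=Mx$ and integrate by parts once more. Using $\int_0^n x\,\e(\beta x)\,\d x=\frac{n\e(\beta n)}{2\pi i\beta}-\frac{1}{2\pi i\beta}\int_0^n\e(\beta x)\,\d x$, the contribution $Mn\,\e(\beta n)-2\pi i\beta M\int_0^n x\,\e(\beta x)\,\d x$ collapses: the two boundary terms $\pm Mn\,\e(\beta n)$ cancel, the factors $1/\beta$ disappear, and what is left is exactly $M\int_0^n\e(\beta x)\,\d x=\dfrac{S_{a,q,k}(c)}{[F_{a,q,k}:\Q]}\int_0^n\e(\beta x)\,\d x$, which is the claimed main term. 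In particular there is no special behaviour at $\beta=0$, the cancellation being purely formal.

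It then remains to estimate the error $E(n)\e(\beta n)-2\pi i\beta\int_0^n E(x)\e(\beta x)\,\d x$. The first piece is $\ll k^2\sqrt{qn}\,(\log qn)^2$ straight from the bound on $E$. For the integral, I would bound $E(x)\ll k^2\sqrt{q}\,(\log qn)^2\sqrt{x}$ on $[0,n]$ and use $\int_0^n\sqrt{x}\,\d x\ll n^{3/2}$ to get
\[
\Big|\beta\int_0^n E(x)\e(\beta x)\,\d x\Big|\ll |\beta|\,k^2\sqrt{q}\,(\log qn)^2\,n^{3/2}\ll |\beta| n\cdot k^2\sqrt{qn}\,(\log qn)^2.
\]
Adding the two estimates yields the stated error term $O\big(k^2(1+|\beta|n)\sqrt{qn}\,(\log qn)^2\big)$. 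I do not expect a genuine obstacle here: the only point worth a moment's care is checking that the main terms telescope exactly so that the $1/\beta$ factors cancel (so the estimate is uniform down to $\beta=0$); everything else is routine bookkeeping, exactly as in the classical passage from a prime-counting estimate in arithmetic progressions to an estimate for the Weyl sum $f_{a,k}(\alpha)$.
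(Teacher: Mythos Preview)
Your proposal is correct and follows exactly the approach indicated in the paper, which simply states that the lemma ``is easily proved via partial summation and Lemma~\ref{l1}.'' The only cosmetic point is that for very small $x$ the bound $E(x)\ll k^2\sqrt{qx}(\log qx)^2$ from Lemma~\ref{l1} is not literally applicable, but there $T(x)=0$ so $E(x)=-Mx=O(1)$, which is harmless.
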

 
It will be necessary
to
gain a better understanding 
of the
exponential sums
$S_{a,q, k}(c)$.
We start by studying $c_{a,q,k}(\cdot)$
in the next lemma,
whose proof   flows directly from~\eqref{eq:condtyty1}
and~\eqref{eq:condqwerty2}. 
\begin{lemma}
\label{lcqka}
Let $b,q$ be coprime integers 
and factor $q$ as
$
q = d \prod_{i = 1}^l 
p_i^{e_i}
$
with $d$ an integer composed of primes dividing $\Delta_a$ and $p_i$ distinct prime numbers 
not dividing $\Delta_a$.
Then we have for any square-free integer $k$,
\[
c_{a,q, k}(b) = c_{a, d, k}(b) \prod_{i = 1}^l 
c_{a,{p}^{e_i}_i, k}(b).
\]
\end{lemma}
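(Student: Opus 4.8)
The plan is to establish the multiplicativity of $c_{a,q,k}(\cdot)$ directly from the characterisation in Lemma~\ref{lem:extend_auto}, part \emph{(3)}, which reduces the claim to elementary congruence and Kronecker-symbol bookkeeping. First I would apply the equivalence of \emph{(2)} and \emph{(3)} in Lemma~\ref{lem:extend_auto} to rewrite $c_{a,q,k}(b)$ as the indicator of the two conditions \eqref{eq:condtyty1} and \eqref{eq:condqwerty2}; similarly for each $c_{a,d,k}(b)$ and $c_{a,p_i^{e_i},k}(b)$. One then has to check that the left-hand side equals $1$ if and only if all the factors on the right-hand side equal $1$.

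For condition \eqref{eq:condtyty1}, note that $\gcd(q,k) = \gcd(d,k)\prod_i \gcd(p_i^{e_i},k)$ since the $d$ and $p_i^{e_i}$ are pairwise coprime, so by the Chinese Remainder Theorem $b\equiv 1\bmod\gcd(q,k)$ holds if and only if $b\equiv 1\bmod\gcd(d,k)$ and $b\equiv 1\bmod\gcd(p_i^{e_i},k)$ for all $i$; this is exactly the product behaviour we want for the first congruence. For condition \eqref{eq:condqwerty2}, the key observation is that its antecedent ``$2\mid k$, $\Delta_a\nmid k$, $\Delta_a\mid [q,k]$'' depends on $q$ only through which primes dividing $\Delta_a$ occur in $[q,k]$ to sufficient power, and since $d$ collects exactly the primes of $q$ dividing $\Delta_a$, we have $\Delta_a\mid [q,k]$ if and only if $\Delta_a\mid[d,k]$; the primes $p_i\nmid\Delta_a$ contribute nothing. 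Hence the antecedent holds for $q$ if and only if it holds for $d$, and is automatically false for each $p_i^{e_i}$ (because $\Delta_a\nmid k$ forces $\Delta_a\nmid[p_i^{e_i},k]$ when $p_i\nmid\Delta_a$, unless $\Delta_a$ is a power of $p_i$, which it is not since $\Delta_a$ is a fundamental discriminant $\neq p_i^m$; one should double-check the edge case $\Delta_a=\pm p_i$, but then $p_i\mid\Delta_a$ contradicting $p_i\nmid\Delta_a$). When the antecedent holds, the consequent $\qr{\beta_a(q)}{b}=1$ must be matched against $\qr{\beta_a(d)}{b}=1$; here one uses that $\beta_a(q)=\beta_a(d)$, which follows from $\gcd(q,\Delta_a)=\gcd(d,\Delta_a)$ (again because the $p_i$ are coprime to $\Delta_a$) together with the defining formula \eqref{def:betain}. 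When the antecedent is false, condition \eqref{eq:condqwerty2} is vacuous on both sides, and every $c_{a,p_i^{e_i},k}(b)$ reduces to its first congruence alone.

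The main obstacle, such as it is, lies in the case analysis for the antecedent of \eqref{eq:condqwerty2}: one must be careful that factoring out the $p_i^{e_i}$ does not accidentally change the divisibility $\Delta_a\mid[q,k]$, and that no $p_i$ secretly divides $\Delta_a$. Since by hypothesis each $p_i\nmid\Delta_a$, we get $\nu_{p}([q,k]) = \nu_p([d,k])$ for every prime $p\mid\Delta_a$, so $\Delta_a\mid[q,k] \iff \Delta_a\mid[d,k]$ cleanly, and likewise $\gcd(q,\Delta_a)=\gcd(d,\Delta_a)$ so $\beta_a(q)=\beta_a(d)$. Assembling these observations, $c_{a,q,k}(b)=1$ exactly when $b\equiv 1\bmod\gcd(p_i^{e_i},k)$ for each $i$ (which is $c_{a,p_i^{e_i},k}(b)=1$, since their antecedents are false) and the pair \eqref{eq:condtyty1}--\eqref{eq:condqwerty2} holds for $d$ (which is $c_{a,d,k}(b)=1$), giving the claimed product formula. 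All steps are routine once the reduction to Lemma~\ref{lem:extend_auto}\emph{(3)} is in place.
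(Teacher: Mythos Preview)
Your proposal is correct and follows exactly the route the paper indicates: the paper does not spell out a proof but merely states that it ``flows directly from~\eqref{eq:condtyty1} and~\eqref{eq:condqwerty2}'', i.e.\ from the characterisation in Lemma~\ref{lem:extend_auto}\emph{(3)}, and you have carefully filled in precisely those details. Your slightly hesitant parenthetical about the edge case is unnecessary---the clean argument is simply that $p_i\nmid\Delta_a$ forces $\nu_p([p_i^{e_i},k])=\nu_p(k)$ for every prime $p\mid\Delta_a$, so $\Delta_a\mid[p_i^{e_i},k]$ would give $\Delta_a\mid k$, contradicting the antecedent---but the substance is right.
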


\begin{lemma}
\label{lSkqa1}
Let $k$ be  square-free,
assume that  $b, q $ are coprime  integers
and
suppose that $q = q_1 q_2$,
$b = b_1q_2 + b_2q_1$,
with $q_1,q_2$ coprime.
If $\gcd(q_1, \Delta_a) = 1$ or $\gcd(q_2, \Delta_a) = 1$
then we have 
\[
S_{a,q, k}(b) = S_{a,q_1, k}(b_1) S_{a,q_2, k}(b_2).
\]
\end{lemma}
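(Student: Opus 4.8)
The plan is to reduce the multiplicativity of $S_{a,q,k}$ to the multiplicativity of $c_{a,q,k}$ provided by Lemma~\ref{lcqka}, together with the standard Chinese Remainder factorisation of the additive character. First I would recall the definition~\eqref{def:exp99}, namely $S_{a,q,k}(b) = \sum_{c \in (\Z/q\Z)^*} c_{a,q,k}(c)\e_q(cb)$, and parametrise the summation variable $c$ modulo $q = q_1q_2$ by the pair $(c_1,c_2)$ with $c \equiv c_1 \bmod{q_1}$, $c \equiv c_2 \bmod{q_2}$, where $c_j$ ranges over $(\Z/q_j\Z)^*$. Under this bijection one has the well-known identity $\e_q(cb) = \e_{q_1}(c_1 b_1)\e_{q_2}(c_2 b_2)$ for the given decomposition $b = b_1 q_2 + b_2 q_1$; this is a direct computation using $\frac{b}{q} \equiv \frac{b_1}{q_1} + \frac{b_2}{q_2} \bmod 1$ after reducing $b_1q_2 \cdot \overline{q_2}$-type terms, and I would not belabour it.

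The key remaining point is that $c_{a,q,k}(c) = c_{a,q_1,k}(c_1)\, c_{a,q_2,k}(c_2)$. Here is where the hypothesis $\gcd(q_1,\Delta_a)=1$ or $\gcd(q_2,\Delta_a)=1$ enters: Lemma~\ref{lcqka} factorises $c_{a,q,k}$ across the prime power decomposition of $q$, but groups \emph{all} primes dividing $\Delta_a$ into the single factor $c_{a,d,k}$. If, say, $\gcd(q_2,\Delta_a)=1$, then every prime dividing $\Delta_a$ that occurs in $q$ occurs in $q_1$, so the ``$d$-part'' of $q$ coincides with the $d$-part of $q_1$ and is coprime to $q_2$; applying Lemma~\ref{lcqka} to $q$, to $q_1$, and to $q_2$ separately and matching up the prime-power factors then yields the claimed splitting of $c_{a,q,k}$. (Note $c_{a,q_2,k}$ is then a product of $c_{a,p_i^{e_i},k}$ factors only, with no $d$-factor, which is consistent.) I would also remark that $c_{a,q_j,k}(c_j)$ depends only on $c_j \bmod q_j$, so the expression is well defined.

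With both factorisations in hand the proof concludes by substituting into the double sum:
\[
S_{a,q,k}(b) = \sum_{c_1 \in (\Z/q_1\Z)^*}\sum_{c_2 \in (\Z/q_2\Z)^*} c_{a,q_1,k}(c_1)\,c_{a,q_2,k}(c_2)\,\e_{q_1}(c_1 b_1)\,\e_{q_2}(c_2 b_2),
\]
which factors as $S_{a,q_1,k}(b_1)\,S_{a,q_2,k}(b_2)$. The main obstacle, such as it is, is purely bookkeeping: making sure the primes dividing $\Delta_a$ are correctly distributed between $q_1$ and $q_2$ so that Lemma~\ref{lcqka} can be applied consistently to all three moduli — this is exactly what the coprimality hypothesis guarantees, and without it the statement would fail because $c_{a,d,k}$ need not split across a factorisation of $d$ itself. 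Everything else is the routine CRT manipulation of Gauss-type character sums.
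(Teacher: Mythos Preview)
Your proposal is correct and follows essentially the same approach as the paper: CRT-split the summation variable, use Lemma~\ref{lcqka} together with the hypothesis $\gcd(q_1,\Delta_a)=1$ or $\gcd(q_2,\Delta_a)=1$ to factor $c_{a,q,k}$, and combine with the standard splitting of the additive character. The only cosmetic difference is that the paper parametrises via $y=y_1q_2+y_2q_1$ (so that a final linear change of variables is needed), whereas your congruence parametrisation $c\equiv c_j\bmod q_j$ lands directly on $S_{a,q_1,k}(b_1)S_{a,q_2,k}(b_2)$.
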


\begin{proof}
By the Chinese remainder theorem
we can write
each element 
$y\in\Z/q\Z$ 
as 
$y_1q_2+y_2q_1$,
where
$y_i \in \Z/q_i\Z$,
thus 
showing
that 
$\e_q(by)=
\e_{q_1}(b_1y_1q_2)
\e_{q_2}(b_2y_2q_1)
$.
This leads to 
\[
S_{a,q, k}(b) 
=
\sum_{y \in (\Z/{q}\Z)^*} c_{a,q, k}(y) \e_q(by) 
= 
\sum_{y_1 \in (\Z/{q_1}\Z)^*}  
\e_{q_1}(b_1y_1q_2)
\sum_{y_2 \in (\Z/{q_2}\Z)^*}  
\e_{q_2}(b_2y_2q_1)
c_{a,q, k}(y_1q_2 + y_2q_1) 
.\] 
By
Lemma~\ref{lcqka}
we have  $c_{a,q, k}(y_1q_2 + y_2q_1) = 
c_{a,q_1, k}(y_1q_2 + y_2q_1) 
c_{a,q_2, k}(y_1q_2 + y_2q_1)
$.
The entity $c_{a,q,k}(y)$ is periodic $\md{q}$ as a function of $y$,
thus we can write 
$S_{a,q, k}(b)$
as
\[
\sum_{y_1 \in (\Z/{q_1}\Z)^*}  
\e_{q_1}(b_1y_1q_2)
c_{a,q_1, k}(y_1q_2) 
\sum_{y_2 \in (\Z/{q_2}\Z)^*}   
\e_{q_2}(b_2y_2q_1)
c_{a,q_2, k}(y_2q_1)
\]
and a simple linear change of variables
in each sum
completes the proof. 
\end{proof}

\begin{lemma}
\label{lSkqa2}
For
$k$ square-free,
$b$ an integer
and
$p$ a prime with $p\nmid b
\Delta_a$ 
we have
\[
|S_{a,p^j, k}(b)| =
\left\{
	\begin{array}{ll}
	       1, & j = 1 \\
		0, & j > 1.
	\end{array}
\right.
\]
\end{lemma}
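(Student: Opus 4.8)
The plan is to compute $S_{a,p^j,k}(b)=\sum_{y\in(\Z/p^j\Z)^*}c_{a,p^j,k}(y)\e_{p^j}(by)$ by first pinning down the function $c_{a,p^j,k}(\cdot)$ explicitly when $p\nmid\Delta_a$, and then evaluating the resulting Ramanujan-type sum. First I would invoke Lemma~\ref{lem:extend_auto}, whose condition \eqref{eq:condqwerty2} is vacuous here: since $p\nmid\Delta_a$ we have $\Delta_a\nmid[p^j,k]$ unless $\Delta_a\mid k$, but in either case the antecedent ``$2\mid k$, $\Delta_a\nmid k$, $\Delta_a\mid[p^j,k]$'' fails, so $c_{a,p^j,k}(y)=1$ precisely when $y\equiv 1\bmod\gcd(p^j,k)$. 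Writing $p^{m}=\gcd(p^j,k)$ with $m\in\{0,1\}$ (as $k$ is square-free), we get that $c_{a,p^j,k}(y)$ is the indicator of $y\equiv 1\bmod p^{m}$.

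With this identification the sum becomes $S_{a,p^j,k}(b)=\sum_{\substack{y\in(\Z/p^j\Z)^*\\ y\equiv 1\,(p^{m})}}\e_{p^j}(by)$. In the case $m=0$ this is the classical Ramanujan sum $c_{p^j}(b)=\sum_{y\in(\Z/p^j\Z)^*}\e_{p^j}(by)$, which equals $0$ for $j>1$ and equals $-1$ for $j=1$ when $p\nmid b$; but wait, we need absolute value $1$, and indeed $|c_p(b)|=1$. In the case $m=1$ (so $p\mid k$, forcing $j\geq 1$), I would substitute $y=1+p z$ with $z$ ranging over $\Z/p^{j-1}\Z$ but subject to the unit condition; since $p\nmid 1$ automatically, every such $y$ is a unit, so $y$ ranges over all of $1+p\Z/p^j\Z$, giving $\e_{p^j}(b)\sum_{z\bmod p^{j-1}}\e_{p^{j-1}}(bz)$, which is $0$ if $p^{j-1}\nmid b$, i.e.\ if $j>1$ (as $p\nmid b$), and is $\e_p(b)$ if $j=1$. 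Either way $|S_{a,p^j,k}(b)|=1$ for $j=1$ and $=0$ for $j>1$, as claimed.

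The only mild subtlety — and the step I would be most careful about — is the bookkeeping of the two cases $p\mid k$ versus $p\nmid k$ through the common formula $\gcd(p^j,k)=p^{\min(j,v_p(k))}=p^{\min(1,v_p(k))}$, and checking that the antecedent of \eqref{eq:condqwerty2} genuinely cannot hold: if $2\mid k$ and $\Delta_a\mid[p^j,k]$ then every prime factor of $\Delta_a$ divides $[p^j,k]=p^j k/\gcd(p^j,k)$, and since $p\nmid\Delta_a$ this forces $\Delta_a\mid k$, contradicting ``$\Delta_a\nmid k$''. Everything else is the routine evaluation of a geometric sum over a coset of $(\Z/p\Z)$ inside $(\Z/p^j\Z)^*$, using $p\nmid b$ to detect when the character is trivial on that coset.
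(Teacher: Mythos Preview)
Your proof is correct and follows essentially the same approach as the paper: both verify that the antecedent of \eqref{eq:condqwerty2} cannot hold when $p\nmid\Delta_a$, reduce to the condition $y\equiv 1\bmod\gcd(p^j,k)$, and then split into the cases $p\nmid k$ (classical Ramanujan sum) and $p\mid k$ (the substitution $y=1+pz$). Your organisation by the value $m\in\{0,1\}$ of $v_p(\gcd(p^j,k))$ is slightly tidier than the paper's case split, and your final paragraph spelling out why $\Delta_a\mid[p^j,k]$ together with $p\nmid\Delta_a$ forces $\Delta_a\mid k$ makes explicit a step the paper leaves to the reader.
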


\begin{proof}
Let us observe that \eqref{eq:condqwerty2} always holds for $q=p^j$ as in the lemma, as the antecedent is never satisfied. 
We first handle the case $j = 1$. 
If $p\nmid k$
then
by Lemma \ref{lem:extend_auto},
$S_{a,p, k}(b)$ is the classical
Ramanujan sum
and the result follows,
while in the remaining case,
$p\mid k$, 
the result is 
also 
immediate from~\eqref{eq:condtyty1}.
Now suppose $j > 1$. Again, 
if $p\nmid k$, the sum in the lemma
is
a Ramanujan sum and the result follows.
We are therefore free to assume that
$p\mid k$.
Writing 
$y=1+px$ we see that 
\[
S_{a,p^j, k}(b)=
\sum_{\substack{y\md{p^j} \\ y \equiv 1\md{p}}} 
\e_{p^j}(by) 
= 
\e_{p^j}(b) 
\sum_{x \md{p^{j - 1}}} 
\e_{p^{j-1}}(bx) 
,\]
which is clearly sufficient
since the inner sum vanishes.
\end{proof}

\begin{lemma}
\label{lem:exceptions}
Let $r,Q,c \in \Z$
be such that 
$
rQ\neq 0,
\gcd(c,Q)=1
$,
$r$ divides $Q$
and 
assume that a function
$f:\Z
\to \mathbb{C}$ 
has
period $|r|$.
If we have $|r|<|Q|$ then the following sum vanishes,
\[
\sum_{b\md{|Q|}} \e_{|Q|}(bc) f(b)
.\]
\end{lemma}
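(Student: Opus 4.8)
The plan is to reduce to the case of positive moduli and then split the summation index according to its residue modulo $r$, turning the sum into a product one of whose factors is a geometric sum that vanishes.

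First I would observe that we lose nothing by assuming $Q,r\in\NN$: the summand $\e_{|Q|}(bc)$ only involves $|Q|$, the divisibility $r\mid Q$ is equivalent to $|r|\mid|Q|$, the condition $\gcd(c,Q)=1$ is unchanged under $Q\mapsto|Q|$, and $f$ is assumed to have period $|r|$. So write $Q$ and $r$ for their absolute values, set $Q=rm$ with $m\in\NN$ (possible since $r\mid Q$), and note that the hypothesis $|r|<|Q|$ becomes precisely $m\geq 2$.

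Next I would reindex $\ZZ/Q\ZZ$ by writing each residue uniquely as $b=rs+t$ with $0\leq t<r$ and $0\leq s<m$. Periodicity gives $f(b)=f(t)$, and since $Q=rm$ we have $\e_Q(bc)=\e_{rm}(rsc)\,\e_{rm}(tc)=\e_m(sc)\,\e_Q(tc)$. Therefore
\[
\sum_{b\md{Q}}\e_{Q}(bc)f(b)
=\Bigl(\sum_{t=0}^{r-1}f(t)\e_{Q}(tc)\Bigr)\Bigl(\sum_{s=0}^{m-1}\e_{m}(sc)\Bigr).
\]
The inner sum is a geometric series: it equals $m$ if $m\mid c$ and $0$ otherwise. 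Since $m\mid Q$ and $\gcd(c,Q)=1$, a divisibility $m\mid c$ would force $m\mid\gcd(c,Q)=1$, contradicting $m\geq2$; hence the inner sum vanishes and with it the whole expression.

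There is no real obstacle here: the argument is a routine CRT-style splitting, and the only point requiring care is that the hypothesis $|r|<|Q|$ is used exactly once — to guarantee $m\geq2$, so that the modulus $m$ of the collapsing geometric sum is a nontrivial divisor of $Q$ and therefore cannot divide the unit $c$.
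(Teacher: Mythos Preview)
Your proof is correct and follows essentially the same approach as the paper: both split $b$ modulo $|r|$ to factor the sum as a finite sum times a geometric sum over $\ZZ/|Q/r|\ZZ$, and then note that this geometric sum vanishes since $|Q/r|>1$ and $\gcd(c,Q)=1$. You have simply spelled out the details (in particular why $m\nmid c$) that the paper leaves implicit.
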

\begin{proof}
The claim becomes clear upon writing the sum in our lemma as
\[
\sum_{b_0\md{|r|}} \e_{|Q|}(b_0c) f(b_0)
\sum_{x\md{|Q/r|}} \e_{|Q/r|}(xc)
\]
and observing that if $|Q/r|\neq 1$
then 
each
exponential sum over $x$ vanishes.
\end{proof}

\begin{lemma}
\label{lSkqa3}
Let $k$ be a square-free integer,
suppose that $q$ is composed of primes dividing $\Delta_a$ and let $b$ be an integer with $\gcd(b, q) = 1$.
If $q\nmid \Delta_a$,
then $S_{a,q, k}(b) = 0$.

\end{lemma}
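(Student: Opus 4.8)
The plan is to make the summand of $S_{a,q,k}(b)$ completely explicit, split it along a prime-power divisor of $q$ by the Chinese remainder theorem, and conclude with Lemma~\ref{lem:exceptions}. Put $g:=\gcd(q,k)$; this is square-free since $k$ is. By Lemma~\ref{lem:extend_auto}, for $y\in(\Z/q\Z)^{*}$ the coefficient $c_{a,q,k}(y)$ equals $\mathbbold{1}[y\equiv 1\md{g}]$ unless all three of $2\mid k$, $\Delta_a\nmid k$ and $\Delta_a\mid[q,k]$ hold, in which case it equals $\mathbbold{1}[y\equiv 1\md{g}]\cdot\mathbbold{1}\big[\qr{\beta_a(q)}{y}=1\big]$. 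In the latter case Lemma~\ref{lem:intersection} guarantees that $\beta_a(q)$ is a fundamental discriminant, so $\qr{\beta_a(q)}{\cdot}$ is a real Dirichlet character of modulus $|\beta_a(q)|=\gcd(q,\Delta_a)$; this modulus divides $q$, so $\gcd(y,q)=1$ forces $\qr{\beta_a(q)}{y}\in\{\pm 1\}$ and hence $\mathbbold{1}\big[\qr{\beta_a(q)}{y}=1\big]=\tfrac12\big(1+\qr{\beta_a(q)}{y}\big)$. In either case, therefore, $c_{a,q,k}$ restricted to $(\Z/q\Z)^{*}$ is a finite $\CC$-linear combination of functions $y\mapsto\mathbbold{1}[y\equiv 1\md{g}]\,\chi(y)$, where $\chi$ is a Dirichlet character whose modulus divides $q$ (the trivial character and, possibly, $\qr{\beta_a(q)}{\cdot}$). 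By linearity of $S_{a,q,k}(b)$ in the coefficient it suffices to prove that, for each such $\chi$, the sum
\[
\Sigma_\chi:=\sum_{y\in(\Z/q\Z)^{*}}\mathbbold{1}[y\equiv 1\md{g}]\,\chi(y)\,\e_q(by)
\]
vanishes.

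Here is where the hypothesis enters. Since every prime factor of $q$ divides $\Delta_a$ but $q\nmid\Delta_a$, there is a prime $p_0\mid q$ with $m:=\nu_{p_0}(q)>\nu_{p_0}(\Delta_a)\ge 1$; in particular $m\ge 2$. Write $q=p_0^{m}q'$ with $p_0\nmid q'$. Under the Chinese remainder isomorphism $(\Z/q\Z)^{*}\cong(\Z/p_0^{m}\Z)^{*}\times(\Z/q'\Z)^{*}$, $y\leftrightarrow(u,v)$, we have $\e_q(by)=\e_{p_0^{m}}(b_1u)\,\e_{q'}(b_2v)$ with $b_1,b_2$ coprime to $p_0$ and to $q'$ respectively; the congruence $y\equiv 1\md{g}$ splits into a congruence on $u$ (namely $u\equiv 1\md{p_0}$ when $p_0\mid k$, and nothing when $p_0\nmid k$) together with a congruence on $v$; and $\chi$ factors accordingly as $\chi=\chi_1\chi_2$, where $\chi_1$ is a function of $u$ of period dividing $p_0^{\nu_{p_0}(\gcd(q,\Delta_a))}=p_0^{\min\{m,\,\nu_{p_0}(\Delta_a)\}}=p_0^{\nu_{p_0}(\Delta_a)}$ (with $\chi_1$ trivial if $\chi$ is). Hence $\Sigma_\chi$ factors as a product over the two coordinates, and it is enough to prove that the $p_0$-coordinate
\[
\sum_{u\,\md{p_0^{m}}}h(u)\,\e_{p_0^{m}}(b_1u)
\]
vanishes, where $h(u)$ is $\chi_1(u)$ multiplied by either $\mathbbold{1}[u\equiv 1\md{p_0}]$ or $\mathbbold{1}[p_0\nmid u]$, according as $p_0\mid k$ or $p_0\nmid k$.

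The crucial point is that $h$ has period a proper divisor of $p_0^{m}$: the indicator factor has period $p_0$, $\chi_1$ has period dividing $p_0^{\nu_{p_0}(\Delta_a)}$, and both $1<m$ and $\nu_{p_0}(\Delta_a)<m$, so $h$ has period dividing $p_0^{\max\{1,\,\nu_{p_0}(\Delta_a)\}}<p_0^{m}$. Lemma~\ref{lem:exceptions} then applies with $Q=p_0^{m}$, with $r=p_0^{\max\{1,\,\nu_{p_0}(\Delta_a)\}}$ (which divides $Q$ and satisfies $|r|<|Q|$), with $f=h$, and with $b_1$ in the role of the residue prime to $Q$; it yields that the displayed $p_0$-coordinate is $0$. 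Hence $\Sigma_\chi=0$ for every $\chi$ that occurs, and so $S_{a,q,k}(b)=0$.

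I do not expect a serious obstacle: the argument is essentially bookkeeping around Lemma~\ref{lem:extend_auto} and Lemma~\ref{lem:exceptions}, and needs no analytic input. The two points I would verify most carefully are \emph{(i)} that $|\beta_a(q)|=\gcd(q,\Delta_a)$ divides $q$, so that $\qr{\beta_a(q)}{\cdot}$ genuinely is a Dirichlet character to a modulus dividing $q$ and is $\pm1$-valued on residues prime to $q$ --- immediate from the definition~\eqref{def:betain} together with the hypothesis that every prime of $q$ divides $\Delta_a$; and \emph{(ii)} the strict inequality $\nu_{p_0}(\Delta_a)<\nu_{p_0}(q)$, which is the defining property of $p_0$ and the only place where both parts of the hypothesis ($q$ composed of primes dividing $\Delta_a$, and $q\nmid\Delta_a$) are used --- the first guaranteeing $\nu_{p_0}(\Delta_a)\ge 1$, the second the existence of such a $p_0$.
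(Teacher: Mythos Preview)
Your proof is correct and follows essentially the same approach as the paper's: make $c_{a,q,k}$ explicit via Lemma~\ref{lem:extend_auto} and then exploit that the resulting summand has period a proper divisor of $q$, invoking Lemma~\ref{lem:exceptions}. The only organisational differences are that the paper splits into two cases (Kronecker condition active or not) and in the second case applies Lemma~\ref{lem:exceptions} directly at modulus $q$ with period $\gcd(q,\Delta_a)$, whereas you unify the cases via the linear combination $\mathbbold{1}[\cdot]=\tfrac12(1+\qr{\beta_a(q)}{\cdot})$ and then pass by the Chinese remainder theorem to a single prime power $p_0^{m}$ before applying Lemma~\ref{lem:exceptions}; both routes are equally short.
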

\begin{proof}
First suppose $2 \nmid k$ or $\Delta_a \mid k$ or $\Delta_a \nmid [q, k]$ and write
$q=p^{e_1}_1 \cdots p^{e_l}_l$. 
We have 
\[
c_{a,q, k}(b) = 
\prod_{i = 1}^l 
c_{a,p^{e_i}_i, k}(b)
,\]
therefore 
$S_{a,q, k}(b) = 0$ can now
be
easily
proved
as before, 
as our hypotheses imply that $e_j>1$ for at least one $j$.

Now suppose that $2 \mid k$ and $\Delta_a \nmid k$ and $\Delta_a \mid [q, k]$. For $y\in\ZZ$, let $f(y):=1$ if $y\equiv 1\bmod{\gcd(k,q)}$ and $\left(\frac{\beta_a(q)}{y}\right)=1$, and $f(y):=0$ otherwise.
By Lemma \ref{lem:extend_auto}
we have
\[
S_{a,q, k}(b) = 
\sum_{\substack{y\md{q}}}f(y)\e_q(by).
\]

Since $\gcd(k,q)\mid \gcd(\Delta_a,q)=|\beta_a(q)|$ and $\beta_a(q)$ is a fundamental
discriminant, we see that $f$ has period $\gcd(\Delta_a,q)$, strictly dividing $q$ by our hypotheses. Apply Lemma \ref{lem:exceptions}. 
\end{proof}

Combining 
Lemmas \ref{lSkqa1}, \ref{lSkqa2} and~\ref{lSkqa3} 
allows us to conclude
that  
\begin{equation}
\label{eq:unifbb}
S_{a,q, k}(b) \ll 1,
\end{equation}
where the implied constant depends at most on $a$.

\subsection{Proof of Proposition~\ref{prop:grhvino}}
\label{s:prpr}
Recall~\eqref{def:thesum}.
Our starting point is the circle method identity,

\begin{equation}
\label{eq:hardylittle}
\sum_{\substack{p_1 + p_2 + p_3 = n \\ p_i \in \text{Spl}(G_{a_i,k_i})}} \prod_{i=1}^3(\log p_i)
=
\int_0^1 f_{a_1,k_1}(\alpha) f_{a_2,k_2}(\alpha) f_{a_3,k_3}(\alpha) \e(-n\alpha) 
\mathrm{d}\alpha
.\end{equation}
  
 \begin{corollary}
\label{lebkov223}
Assume $\HRHl(a)$, and suppose $\alpha,c,q$ fulfil
$
|\alpha - c/q| \leq
q^{-1}n^{-2/3}$,
$\gcd(c, q) = 1$,
$q \leq n^{2/3}$
and
that
$k$ is square-free.
Then we have $
f_{a,k}(\alpha) \ll
(n/q+ k^2 n^{5/6})
( \log n)^{2}
$.
 
\end{corollary}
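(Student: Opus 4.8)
The plan is to apply Lemma~\ref{lebkov22} directly. We write $\alpha = c/q + \beta$ with $|\beta| \le q^{-1} n^{-2/3}$ and $q \le n^{2/3}$, so that all the hypotheses of Lemma~\ref{lebkov22} are met. That lemma gives
\[
f_{a,k}(\alpha) = \frac{S_{a,q,k}(c)}{[F_{a,q,k}:\QQ]} \int_0^n \e(\beta x)\, \mathrm{d}x + O\bigl(k^2 (1 + |\beta| n) \sqrt{qn} (\log qn)^2\bigr).
\]
For the main term I would use the trivial bounds $S_{a,q,k}(c) \ll 1$ from~\eqref{eq:unifbb}, $[F_{a,q,k}:\QQ] \ge 1$, and $\bigl|\int_0^n \e(\beta x)\, \mathrm{d}x\bigr| \le \min(n, |\beta|^{-1})$. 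Actually the cleaner route is to bound $\bigl|\int_0^n \e(\beta x)\,\mathrm{d}x\bigr| \ll 1/|\beta|$ when $|\beta| \ge 1/n$ and $\ll n$ otherwise; combined with the combinatorial fact (standard in this circle-method setup) that when $q$ is large the main term is actually controlled by $n/q$, one gets main term $\ll n/q$. More carefully: since $[F_{a,q,k}:\QQ] \gg \phi(q) \gg q/\log\log q$ by Lemma~\ref{lem:bythegive} when $S_{a,q,k}(c)\neq 0$, and $\bigl|\int_0^n \e(\beta x)\,\mathrm{d}x\bigr|\le n$, the main term is $\ll n \log\log q / q \ll (n/q)\log n$; this is absorbed into the stated bound.

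For the error term, I estimate $|\beta| n \le (q^{-1} n^{-2/3}) n = n^{1/3}/q \le n^{1/3}$, so $1 + |\beta| n \ll n^{1/3}$. Also $\sqrt{qn} \le \sqrt{n^{2/3} \cdot n} = n^{5/6}$, and $(\log qn)^2 \ll (\log n)^2$. Hence the error term is
\[
O\bigl(k^2 \cdot n^{1/3} \cdot n^{5/6} \cdot (\log n)^2\bigr) = O\bigl(k^2 n^{7/6} (\log n)^2\bigr),
\]
which is comfortably $\ll k^2 n^{5/6} (\log n)^2$ — wait, that is not right; let me redo this. We have $(1+|\beta|n)\sqrt{qn} \le (1 + n^{1/3}/q)\sqrt{qn}$. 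Split: the term $\sqrt{qn} \le n^{5/6}$ contributes $O(k^2 n^{5/6}(\log n)^2)$, and the term $(n^{1/3}/q)\sqrt{qn} = n^{1/3+1/2}/\sqrt{q} = n^{5/6}/\sqrt{q} \le n^{5/6}$ also contributes $O(k^2 n^{5/6}(\log n)^2)$. So altogether the error is $O(k^2 n^{5/6}(\log n)^2)$, and combining with the main term bound $O((n/q)(\log n)^2)$ gives the claim
\[
f_{a,k}(\alpha) \ll (n/q + k^2 n^{5/6})(\log n)^2.
\]

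The only mild subtlety — and the step I expect to need the most care — is justifying that the main term $\frac{S_{a,q,k}(c)}{[F_{a,q,k}:\QQ]}\int_0^n \e(\beta x)\,\mathrm{d}x$ is $\ll (n/q)(\log n)^2$ rather than merely $\ll n$. This follows because whenever $S_{a,q,k}(c)\ne 0$ we have (using Lemmas~\ref{lSkqa1}, \ref{lSkqa2}, \ref{lSkqa3} together with Lemma~\ref{lem:bythegive}) that $[F_{a,q,k}:\QQ] \gg \phi(q) \gg q/\log\log q$, while $|S_{a,q,k}(c)| \ll 1$ by~\eqref{eq:unifbb} and $|\int_0^n \e(\beta x)\,\mathrm{d}x| \le n$; thus the main term is $\ll n\log\log q/q \ll (n/q)\log\log n$, absorbed into $(n/q)(\log n)^2$. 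If one prefers to avoid invoking the lower bound on the degree, one can instead bound $|\int_0^n \e(\beta x)\,\mathrm{d}x| \le \min(n, 1/|\beta|)$ and only claim the weaker $\ll n$, but then the $n/q$ gain in the minor-arc estimate is lost; since the statement explicitly wants $n/q$, the degree lower bound is the natural tool. The rest is the routine arithmetic above.
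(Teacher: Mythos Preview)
Your proposal is correct and follows essentially the same route as the paper: apply Lemma~\ref{lebkov22}, bound the main term via $|S_{a,q,k}(c)|\ll 1$ together with $[F_{a,q,k}:\QQ]\gg\varphi(q)$ from Lemma~\ref{lem:bythegive} and $\bigl|\int_0^n\e(\beta x)\,\mathrm{d}x\bigr|\le n$, then handle the error term by splitting $(1+n^{1/3}/q)\sqrt{qn}$ and using $q\le n^{2/3}$. One small remark: the degree lower bound $[F_{a,q,k}:\QQ]\ge\varphi([q,k])/2\ge\varphi(q)/2$ follows directly from Lemma~\ref{lem:bythegive} and holds unconditionally, so you do not need to condition on $S_{a,q,k}(c)\neq 0$ or invoke Lemmas~\ref{lSkqa1}--\ref{lSkqa3} for it.
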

\begin{proof}
Observe that Lemma~\ref{lem:bythegive} gives
$
[F_{a,q, k} : \mathbb{Q}]^{-1}
\ll  \varphi([q,k])^{-1} \leq
 \varphi(q)^{-1}
\ll
(\log q)q^{-1}$,  
hence, 
by 
Lemma~\ref{lebkov22}
and~\eqref{eq:unifbb}
one obtains
$
f_{a,k}(\alpha) \ll
n (\log n)
q^{-1}
 +  k^2
(1 + n^{1/3}q^{-1})
\sqrt{q n} (\log n)^2
$.
Our proof can then be concluded by using
$q\leq n^{2/3}$.
\end{proof}
Define
$P:=n^{\nu}$,
for an absolute  constant $\nu \in (0,1/6]$  
that will be chosen
later. 
In our situation 
the major arc
$\mathfrak{M}(c,q)$
is defined
for coprime $c,q$ 
via
\[
\mathfrak{M}(q,c)
:=\{\alpha: |\alpha-c/q| \leq q^{-1} n^{-2/3}
\}
,\]
while we  let 
$\mathfrak{M}$
be the union of all $\mathfrak{M}(q,c)$
with 
$1\leq q \leq P$,
$1\leq c \leq q,\gcd(c,q)=1$
and define the minor arcs through
$\mathfrak{m}:=[0,1]\setminus \mathfrak{M}$.
We
note here
that the major arcs are disjoint owing to
$(q q')^{-1}>(qn^{2/3})^{-1}+(q'n^{2/3})^{-1}$ that can be proved for all $n>8$
due to
$q,q'\leq  n^{1/3}$. 
\begin{corollary}
\label{lebkov777} 
Assume $\HRHl(a_i)$ for $1\leq i\leq 3$. Then 
\begin{equation*}
\int_\mathfrak{m} 
|f_{a_1,k_1}(\alpha)
f_{a_2,k_2}(\alpha)
f_{a_3,k_3}(\alpha) 
| 
\mathrm{d}\alpha 
\ll
n^{2- \nu}
( \log n)^{3}
\min_i k_i^2 
.
\end{equation*}
\end{corollary}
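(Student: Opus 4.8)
The plan is to use the standard Vinogradov minor-arc estimate, adapted to the exponential sums $f_{a,k}(\alpha)$. First I would bound the triple integral over $\mathfrak{m}$ by pulling out one factor in sup-norm and applying Cauchy--Schwarz (or rather the trivial $L^1$-bound) to the remaining two:
\[
\int_{\mathfrak{m}} |f_{a_1,k_1}f_{a_2,k_2}f_{a_3,k_3}|\,\d\alpha
\leq \Big(\sup_{\alpha\in\mathfrak{m}} |f_{a_j,k_j}(\alpha)|\Big)
\int_0^1 |f_{a_i,k_i}(\alpha)f_{a_{i'},k_{i'}}(\alpha)|\,\d\alpha,
\]
where $j$ is chosen so that $k_j=\min_i k_i$, and $\{i,i'\}$ are the other two indices. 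By Cauchy--Schwarz the remaining integral is at most $(\int_0^1|f_{a_i,k_i}|^2)^{1/2}(\int_0^1|f_{a_{i'},k_{i'}}|^2)^{1/2}$, and each $L^2$-integral is $\sum_{p\leq n,\ p\in\mathrm{Spl}(G_{a,k})}(\log p)^2 \ll n\log n$ by the prime number theorem (the splitting condition only cuts down the set of primes). So the two-factor integral contributes $O(n\log n)$.

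The main work is then the sup-norm bound $\sup_{\alpha\in\mathfrak{m}}|f_{a_j,k_j}(\alpha)| \ll n^{1-\nu}(\log n)^2$. For this I would invoke Dirichlet's theorem on rational approximation: every $\alpha\in[0,1]$ has a representation $\alpha = c/q+\beta$ with $\gcd(c,q)=1$, $q\leq n^{2/3}$, $|\beta|\leq q^{-1}n^{-2/3}$. If $\alpha\in\mathfrak{m}$ then necessarily $q>P=n^{\nu}$, since otherwise $\alpha$ would lie in a major arc $\mathfrak{M}(q,c)$. Now Corollary~\ref{lebkov223} applies and gives
\[
f_{a_j,k_j}(\alpha)\ll (n/q + k_j^2 n^{5/6})(\log n)^2 \ll (n^{1-\nu} + k_j^2 n^{5/6})(\log n)^2.
\]
Here one uses $q>n^\nu$ for the first term and $q\leq n^{2/3}$ for the second term already absorbed in Corollary~\ref{lebkov223}'s statement. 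Combining, the triple integral over $\mathfrak{m}$ is
\[
\ll (n^{1-\nu}+k_j^2 n^{5/6})(\log n)^2 \cdot n\log n \ll n^{2-\nu}(\log n)^3\min_i k_i^2,
\]
provided $\nu\leq 1/6$ so that the $k_j^2 n^{5/6}\cdot n = k_j^2 n^{11/6}$ term is dominated by $n^{2-\nu}\min_i k_i^2 = n^{2-\nu}k_j^2$ (indeed $11/6 \leq 2-\nu$ iff $\nu\leq 1/6$), and noting $n^{1-\nu}\cdot n = n^{2-\nu}$ while the constant $1\leq \min_i k_i^2$ can be folded in trivially.

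The only delicate point is bookkeeping of the two competing error terms and ensuring the choice $\nu\in(0,1/6]$ makes both of them fit under $n^{2-\nu}(\log n)^3\min_i k_i^2$; there is no genuine obstacle since Corollary~\ref{lebkov223} already did the hard analytic work (the GRH input via Lemma~\ref{lebkov22} and the uniform bound~\eqref{eq:unifbb} on the $S_{a,q,k}$). One should also remark that the disjointness of the major arcs, already verified in the text for $n>8$, is what guarantees that $\alpha\in\mathfrak m$ forces $q>P$ in the Dirichlet approximation, which is exactly the hypothesis needed to apply Corollary~\ref{lebkov223} with the gain $n/q\ll n^{1-\nu}$.
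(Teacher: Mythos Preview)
Your proof is correct and follows essentially the same approach as the paper: pull out the factor with smallest $k_i$ in sup-norm, bound it via Dirichlet approximation and Corollary~\ref{lebkov223}, and control the remaining two factors by Cauchy--Schwarz and Parseval. The only cosmetic difference is that the paper immediately absorbs both terms from Corollary~\ref{lebkov223} into the single bound $f_{a,k}(\alpha)\ll k^2 n^{1-\nu}(\log n)^2$ (using $n^{5/6}\leq n^{1-\nu}$ when $\nu\leq 1/6$ and $k\geq 1$), whereas you carry the two terms $n^{1-\nu}$ and $k_j^2 n^{5/6}$ separately to the end; the outcome is the same.

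One small inaccuracy in your closing remark: the fact that $\alpha\in\mathfrak{m}$ forces $q>P$ in the Dirichlet approximation follows directly from the definition $\mathfrak{m}=[0,1]\setminus\mathfrak{M}$, not from the disjointness of the major arcs (disjointness is only needed later in the major-arc analysis). This does not affect the argument.
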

\begin{proof}
By Dirichlet's approximation theorem,
for each $\alpha$
there exist coprime integers
$c,q$ with $|\alpha-c/q|\leq q^{-1}n^{-2/3}$
and $1\leq q\leq n^{2/3}$.
If $\alpha \in \mathfrak{m}$
then $q>n^\nu$, hence
Corollary~\ref{lebkov223} 
yields the estimate
$f_{a,k}(\alpha) \ll k^2 n^{1-\nu}(\log n)^{2}$. 
We may assume 
$k_1\leq k_2,k_3$
with no loss of generality,
therefore the integral in our lemma is 
$\ll 
k_1^2 n^{1-\nu} 
( \log n)^{2}
\int_0^1 |f_{a_2,k_2}(\alpha) f_{a_3,k_3}(\alpha)| \mathrm{d}\alpha 
$, thus 
Cauchy's inequality
yields the following bound for the last integral,
\[  \ll 
\left(\int_0^1 |f_{a_2,k_2}(\alpha)|^2 \mathrm{d}\alpha \r)^{1/2}
\left(\int_0^1 |f_{a_3,k_3}(\alpha)|^2 \mathrm{d}\alpha \r)^{1/2}
.\]
Both integrals are at most    $ 
\sum_{p\leq n } (\log p)^2
\ll n \log n
$,  
which provides the desired result. 
\end{proof}
Note that 
if $\beta+c/q \in \mathfrak{M}(q,c)$ for some $q\leq n^{1/3}$
then 
Lemma \ref{lebkov22}
shows that  
\[
f_{a_i,k_i}(\alpha) = 
\frac{S_{a_i,q, k_i}(c)}{[F_{a_i,q, k_i} : \mathbb{Q}]} 
\int_0^n \e(\beta x) 
\mathrm{d}x
 + 
O\left(
\frac{n^{5/6}}{q^{1/2}} 
(\log n)^2
\max_i k_i^2
\right)
.\]
Hence
the estimates 
\[
\int_0^n \e(\beta x) \mathrm{d}x 
\ll
\min\{n, |\beta|^{-1}\}
\quad\text{ and }\quad
 \frac{S_{a,q, k}(c)}{[F_{a,q, k} : \mathbb{Q}]}
\ll \varphi(q)^{-1}
\]
show that $f_{a_1,k_1} (c/q + \beta)
f_{a_2,k_2} (c/q + \beta)
 f_{a_3,k_3} (c/q + \beta) 
-L_{\b{a},q, \mathbf{k}}(c)
d_{\b{a},\mathbf{k}}(q)^{-1}
\left(\int_0^n \e(\beta x) \mathrm{d}x \right)^3 
$
is
\begin{equation}
\label{eq:maxell}
\ll
\frac{\min\{n^2, |\beta|^{-2}\}}{\varphi(q)^2}
\frac{n^{5/6}}{q^{1/2}} 
(\log n)^2
\max_i k_i^{2}
+\frac{n^{15/6}}{q^{3/2}} 
(\log n)^6
\max_i k_i^{6} 
.\end{equation} 
The major arcs make the following contribution
towards~\eqref{eq:hardylittle},
\[
\sum_{1\leq q \leq n^\nu} 
\sum_{\substack{1 \leq c \leq q  \\ \gcd(c,q)=1}} 
\int_{
-q^{-1}
n^{-2/3}
}^{
q^{-1}
n^{-2/3} 
} 
f_{a_1,k_1} (c/q + \beta)
f_{a_2,k_2} (c/q + \beta)
 f_{a_3,k_3} (c/q + \beta) 
\e(-n(c/q + \beta)) 
\mathrm{d}\beta
,\]
and a  
straightforward
analysis utilising~\eqref{eq:maxell}
reveals that
 the last expression equals  
\[
\sum_{1\leq q \leq n^\nu}  
\sum_{\substack{1 \leq c \leq q  \\ \gcd(c,q)=1}} 
\frac{\e_q(-cn) L_{\b{a},q, \mathbf{k}}(c) }{d_{\b{a},\mathbf{k}}(q)}
\int_{
-q^{-1}
n^{-2/3}
}^{
q^{-1}
n^{-2/3} 
} 
\left(\int_0^n \e(\beta x) \mathrm{d}x \right)^3 
\e(-n \beta) 
\mathrm{d}\beta
+
O\l(
\frac{ n^{11/6}  (\log n)^6
}
{
\max_i k_i^{-6}
}
\r)
.\]
The integral over $\beta$ 
can be estimated as 
$n^2/2
+O(q^2 n^{4/3})
$, thus by
~\eqref{eq:unifbb}
the sum over $q$ 
is 
$
\mathfrak{S}_{\b{a},\b{k}}(n)
n^2/2
+O((n^{4/3+\nu}+n^{2-\nu})(\log n)^3)
$ 
and the choice $\nu = 1/6$ 
  concludes the proof of 
Proposition~\ref{prop:grhvino}.

\section{Injecting the circle method into Hooley's approach}
\label{s:injecthardhoo}

\subsection{Opening phase}
\label{s:opening}

The aim of~\S\ref{s:injecthardhoo} is to prove Theorem~\ref{thm:main} and Theorem~\ref{thm:main_2}. 
We commence in this subsection
by calling upon
parts of Hooley's work~\cite{hooley}
that will prove
useful.
We will make an effort to keep the notation in line 
with his as much as possible. 
In this section, the letters $p,q$ will be reserved for primes. 
Two primes $p,q$ 
are said to satisfy the property $R_a(q,p)$
if both of the following conditions hold,
\[
q|(p-1);
a \text{ is a } q\text{th power residue } \md{p}
.\]
A standard index calculus argument 
shows that for a prime $p\nmid a$
the integer
$a$ is a 
primitive root $\md{p}$
if and only if $R_{a}(q,p)$ fails for all primes $q$.
For any 
$\eta,\eta_1,\eta_2 \in \R_{>0}$
we define  
\[
\n_a(n,\eta):=
\#
\big\{
p\leq n:
R_a(q,p)  \text{ fails for all primes }
q \leq \eta 
\big\}
\]
and 
\[
\m_a(n,\eta_1,\eta_2):=
\#
\big\{
p\leq n: \text{ there exists } q\in (\eta_1,\eta_2]
\text{ such that } R_a(q,p) \text{ holds} 
\big\}
.\]
Letting 
\[
\n_a(n):=\#\{p\leq n:a \text{ is a primitive root modulo } p \}
\]
we see from the work of Hooley~\cite[Eq.(1)]{hooley}
that for each $\xi_1,\xi_2,\xi_3 \in \R$ with 
\[
1\leq \xi_1<\xi_2<\xi_3<n-1
\]
we have 
\begin{equation}
\label{hoo:ena}
\n_a(n)=\n_a(n,\xi_1)
+O\big(
\m_a(n,\xi_1,\xi_2)
+
\m_a(n,\xi_2,\xi_3)
+
\m_a(n,\xi_3,n-1)
\big)
.\end{equation}
Hooley makes specific choices for the parameters $\xi_i$;
we will keep the same choice for $\xi_2$ and $\xi_3$,
namely
$
\xi_2:=n^{\frac{1}{2}} (\log n)^{-2}$,
$
\xi_3:=n^{\frac{1}{2}} \log n$,
however, we shall  later
choose a different value for $\xi_1$.
For the moment we shall only demand that
$
1<\xi_1 \leq 
(\log n) (\log \log n)^{-1} 
$.
The estimates proved in~\cite[Eq.(2), Eq.(3)]{hooley}
provide us with 
\begin{equation}
\label{hoo:dio}
\n_a(n)=\n_a(n,\xi_1)
+O\big(
\m_a(n,\xi_1,\xi_2)
+n (\log \log n) (\log n)^{-2}
\big)
.\end{equation}
The argument in~\cite[Eq.(33)]{hooley}
shows that for each $\xi_1$ as above,
one has 
under
$\HRH(a)$
that 
\[
\m_a(n,\xi_1,\xi_2)
\ll
\frac{n}{\log n} \sum_{q>\xi_1}\frac{1}{q^2}
+\frac{n}{\log ^2 n}
,\]
which, once combined with the
simple estimate $\sum_{q>\xi_1}q^{-2}\ll \xi_1^{-1}$
and~\eqref{hoo:dio}
provides us with
\begin{equation}
\label{hoo:tria}
\n_a(n)=\n_a(n,\xi_1)
+O\Bigg(
\frac{n  }{\log n}
\frac{1}{\xi_1} 
+ \frac{n \log \log n}{\log ^2n} 
\Bigg)
,\end{equation}
with an implied constant depending at most on $a$.
\begin{lemma}
\label{lem:exceptions34}
For any
$\beta \in (0,1)$
and any
sets of primes $\c{P}_i\subset [1,n]$  
of cardinality $\epsilon(\c{P}_i) n /\log n$ 
the following estimate holds with an implied constant that depends at most on $\beta$,
\[
\sum_{\substack{p_1+p_2+p_3=n\\
\exists i: p_i \in \c{P}_i 
}}
\prod_{i=1}^3
\log p_i
\ll_\beta
n^2
(\max_i \epsilon(\c{P}_i))^{\beta} 
.\]
\end{lemma}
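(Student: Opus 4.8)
The plan is to reduce the counting problem with the constraint ``$\exists i:\ p_i\in\mathcal{P}_i$'' to three sub-problems, in each of which one prescribed prime is forced to lie in a thin set $\mathcal{P}_i$ while the other two range freely over all primes up to $n$. By the union bound,
\[
\sum_{\substack{p_1+p_2+p_3=n\\ \exists i:\ p_i\in\mathcal{P}_i}}\prod_{j=1}^3\log p_j
\ \leq\ \sum_{i=1}^3
\sum_{\substack{p_1+p_2+p_3=n\\ p_i\in\mathcal{P}_i}}\prod_{j=1}^3\log p_j,
\]
so it suffices to bound each term on the right by $O_\beta\big(n^2\,\epsilon(\mathcal{P}_i)^\beta\big)$. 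By symmetry I may treat only $i=1$. Writing $f_{\mathcal{P}_1}(\alpha):=\sum_{p\in\mathcal{P}_1}(\log p)\e(\alpha p)$ and $F(\alpha):=\sum_{p\leq n}(\log p)\e(\alpha p)$, the circle method gives
\[
\sum_{\substack{p_1+p_2+p_3=n\\ p_1\in\mathcal{P}_1}}\prod_{j=1}^3\log p_j
=\int_0^1 f_{\mathcal{P}_1}(\alpha)\,F(\alpha)^2\,\e(-n\alpha)\,\mathrm{d}\alpha.
\]

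Next I would split $[0,1]$ into major and minor arcs $\mathfrak{M}\cup\mathfrak{m}$ with a suitable parameter $P$. On the minor arcs one uses $|f_{\mathcal{P}_1}(\alpha)|\leq F(\alpha')$ — or rather directly the classical Vinogradov bound $\sup_{\alpha\in\mathfrak{m}}|F(\alpha)|\ll n(\log n)^{-A}$ — together with Cauchy--Schwarz and Parseval $\int_0^1|F(\alpha)|^2\,\mathrm{d}\alpha\ll n\log n$ and the trivial bound $\int_0^1|f_{\mathcal{P}_1}(\alpha)|^2\,\mathrm{d}\alpha\leq\sum_{p\in\mathcal{P}_1}(\log p)^2\ll \epsilon(\mathcal{P}_1)\,n\log n$; this already contributes $\ll \epsilon(\mathcal{P}_1)^{1/2}n^2(\log n)^{-A+2}$, which is acceptable once $A$ is large in terms of $\beta$. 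The only genuine input on the major arcs is an upper bound for $\int_{\mathfrak{M}}|f_{\mathcal{P}_1}(\alpha)|\,|F(\alpha)|^2\,\mathrm{d}\alpha$; here one applies the standard major-arc estimates for $F$ (giving $\ll \min\{n,|\beta|^{-1}\}/\varphi(q)$ near $c/q$) and bounds $|f_{\mathcal{P}_1}(c/q+\beta)|$ trivially by $\sum_{p\in\mathcal{P}_1}\log p\ll \epsilon(\mathcal{P}_1)\,n$, whence the major-arc contribution is $\ll \epsilon(\mathcal{P}_1)\,n\cdot\sum_{q\leq P}\varphi(q)^{-2}\sum_{c}\int\min\{n^2,|\beta|^{-2}\}\,\mathrm{d}\beta\ll\epsilon(\mathcal{P}_1)\,n^2$, with an absolute implied constant (the sum over $q$ of $\varphi(q)^{-2}$ times a bounded factor converges). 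Combining the two contributions and choosing $A=A(\beta)$ large enough that $\epsilon(\mathcal{P}_1)^{1/2}(\log n)^{-A+2}\ll\epsilon(\mathcal{P}_1)^{\beta}$ — which is possible since $\epsilon(\mathcal{P}_1)\leq 1$ forces $\epsilon(\mathcal{P}_1)^{1/2}\leq\epsilon(\mathcal{P}_1)^{\beta}$ for $\beta\leq 1/2$, while for $\beta\in(1/2,1)$ one simply has $\epsilon(\mathcal{P}_1)^{1/2}\leq\epsilon(\mathcal{P}_1)^{1/2}$ and absorbs powers of $\log n$ — yields the claimed bound $\ll_\beta n^2\epsilon(\mathcal{P}_1)^\beta$.

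The main obstacle, such as it is, lies in the interplay between the two regimes: the minor arcs give a power saving in $\log n$ but only a square-root gain in $\epsilon(\mathcal{P}_1)$, while the major arcs give the full linear gain in $\epsilon(\mathcal{P}_1)$ but no saving in $\log n$ whatsoever. One must therefore be slightly careful to check that the product $\epsilon(\mathcal{P}_1)^{1/2}(\log n)^{O(1)}$ coming from the minor arcs is indeed dominated by $\epsilon(\mathcal{P}_1)^\beta$ for every fixed $\beta\in(0,1)$; this is where the dependence of the implied constant on $\beta$ (through the choice of $A$) enters, and it is the reason the statement is phrased with an arbitrary $\beta<1$ rather than with $\beta=1$. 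Everything else is a routine application of the circle method exactly as set up in \S\ref{s:prpr}, now with one of the three exponential sums replaced by the subset sum $f_{\mathcal{P}_1}$ and estimated trivially throughout.
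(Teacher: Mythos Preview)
Your circle-method approach is genuinely different from the paper's, which is entirely elementary: after the same union bound, the paper writes the $i$-th term as $(\log n)^3\sum_{p\in\mathcal{P}_i}r_2(n-p)$ with $r_2(m)=\#\{(p_1,p_2):p_1+p_2=m\}$, applies H\"older's inequality with exponents $(1/\beta,1/(1-\beta))$ to extract the factor $|\mathcal{P}_i|^\beta$ directly, and then bounds the moment $\sum_{p<n}r_2(n-p)^{1/(1-\beta)}$ uniformly in $\mathcal{P}_i$ via the sieve upper bound $r_2(m)\ll m(\log m)^{-2}\prod_{p'\mid m,\,p'\neq 2}\frac{p'-1}{p'-2}$ together with an average of $\prod_{p'\mid n-p}(1+c/p')$ over primes $p<n$.

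Your argument, as written, has a genuine gap when $\beta\in(1/2,1)$. The Cauchy--Schwarz step on the minor arcs yields at best a saving of $\epsilon(\mathcal{P}_1)^{1/2}(\log n)^{-A+O(1)}$, and for this to be $\ll\epsilon(\mathcal{P}_1)^\beta$ one needs $(\log n)^{A-O(1)}\gg\epsilon(\mathcal{P}_1)^{-(\beta-1/2)}$. But $\epsilon(\mathcal{P}_1)$ may be an arbitrarily large negative power of $\log n$ --- take $\epsilon(\mathcal{P}_1)=(\log n)^{-K}$ with $K$ as large as one likes --- so no choice $A=A(\beta)$ works uniformly; the implied constant would have to depend on $\mathcal{P}_1$, which is not permitted. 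Your parenthetical remedy for $\beta>1/2$ (``one simply has $\epsilon^{1/2}\leq\epsilon^{1/2}$ and absorbs powers of $\log n$'') is a tautology and does not address this. The H\"older device in the paper's proof is precisely what produces the exponent $\beta$ on $\epsilon$ for \emph{every} $\beta\in(0,1)$ without any such uniformity issue, and it avoids the circle method entirely. (A minor additional slip: your major-arc sum $\sum_{q\leq P}\varphi(q)^{-2}\sum_{c}\int\min\{n^2,|\beta|^{-2}\}\,\mathrm{d}\beta$ is of size $n\sum_{q\leq P}\varphi(q)^{-1}\asymp n\log P$, not $O(n)$ with an absolute constant as claimed.)
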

\begin{proof}
Define $r_2(m):=\#\{(p_1,p_2): 
p_i \text{ prime},
p_1+p_2=m\}$.
The sum in the
lemma is at most 
\[
(\log n)^3 
\sum_{i=1}^3
\sum_{\substack{p_1+p_2+p_3=n\\
p_i\in \c{P}_i 
}}
1
=
(\log n)^3 
\sum_{i=1}^3
\sum_{p<n} 
\mathbf{1}_{\c{P}_i}(p)
r_2(n-p)
\]
and using
H\"{o}lder's inequality 
with exponents $(1/\beta,1/(1-\beta))$
allows us to bound
the inner sum on the right by
\begin{equation*}
\epsilon(\c{P}_i)^{\beta} n^{\beta} (\log n)^{-\beta}
(
\sum_{p < n}
r_2(n-p)^{1/(1-\beta)}
)^{1-\beta}
.
\end{equation*}
Straightforwardly, there exists $c=c(\beta)>0$
with
$(1-z)/(1-2z)\leq (1+cz)^{1-\beta}$
for all $0<z\leq 1/3$.
Using this 
for $z=1/p'$
and alluding to
the 
following
classical
bound
(that can be found in~\cite[Eq. (7.2)]{MR0424730}, for example),
\[ 
r_2(m)
\ll
\frac{m}{(\log m)^2}\prod_{\substack{p' | m, p' \neq 2}}\frac{p'-1}{p'-2}
\]
yields 
\[r_2(m)
\ll_\beta
\frac{m}{(\log m)^2}
\prod_{p' | m}\l(1+\frac{c}{p'}\r)
^{1-\beta}
.\]
Therefore
the
quantity in the lemma is  
\[
\ll (\log n)^3
\Big(
\frac{n\max_i \epsilon(\c{P}_i) }{\log n}
\Big)^{\beta} 
\Big(
\Big(\frac{n}{(\log n)^2}
\Big)^{1/(1-\beta)}
\sum_{p < n}
\prod_{p' | n-p} (1+c/p')
\Big)^{1-\beta}
\]
and 
to finish our proof
it remains to show
that 
\[
\sum_{p < n}
\prod_{p' | n-p} (1+c/p')
\ll_c
\frac{n}{\log n}
.\]
Rewriting this sum as 
$ 
\sum_{d\leq n} 
\mu(d)^2 c^{\omega(d)}
d^{-1}
\#\{p<n:p\equiv n \md{d}\}
$
we see that 
the contribution from integers $d>n^{1/2}$ is 
$
\ll
\sum_{n^{1/2}<d\leq n} 
c^{\omega(d)}d^{-1} 
(n/d+1) 
\ll
n^{1/2+1/100}$.
By Brun--Titchmarsh,
the contribution of
terms with  
$d\leq n^{1/2}$
is $
\ll
n (\log n)^{-1}
\sum_{d\leq n^{1/2}} 
c^{\omega(d)}
(d\phi(d))^{-1}
\ll
n (\log n)^{-1}
$,
thus concluding our proof.
\end{proof}    
Let us define the set 
\[
\c{P}_i 
:= 
\big\{p:p|a_i\big\}
 \cup 
\big\{
p\leq n: 
R_{a_i}(q,p) \text{ holds for some prime } 
q>\xi_1
\big\}
.\] 
The arguments bounding $\m_{a}(n,\xi_1,n-1)$ in the deduction of \eqref{hoo:tria} show under $\HRH(a)$ that
\begin{equation}
\label{hoo:tess}
\#\c{P}_i
\ll
\frac{n  }{\xi_1\log n}
+ \frac{n \log \log n}{\log ^2n} 
.\end{equation}
We can now apply Lemma~\ref{lem:exceptions34}
and to do so let us observe that by~\eqref{hoo:tess}
we have 
\[
\epsilon(\c{P}_i)
=\frac{\log n}{n}
\#\c{P}_i
\ll
\frac{1}{\xi_1}
+\frac{\log \log n}{\log n}
\ll
\frac{1}{\xi_1}
.\]
Therefore, under $\HRH(a_i)$
for $i=1,2,3$,
 and 
for each fixed 
$\beta \in (0,1)$ 
we
acquire the validity of   
\begin{equation}
\label{eq:bridge}
\sum_{\substack{p_1+p_2+p_3=n
\\
\forall i:\ \F_{p_i}^*=\langle a_i \rangle}}
\prod_{i=1}^3
 \log p_i 
= 
\hspace{-0,3cm}
\sum_{\substack{
p_1+p_2+p_3=n,
p_i \nmid a_i
\\
\forall i,
\forall q\leq \xi_1:\ 
R_{a_i}(q,p_i) \text{ fails}
}
}
\prod_{i=1}^3 \log p_i
+
O_\beta
\Big(
\frac{n^2}{\xi_1^{\beta}}
\Big)
.\end{equation}  
Bringing into play 
the following quantity 
for each square-free positive integer $k_i$,
\begin{equation}
\label{def:gal}
\p_{\b{a},\b{k}}(n)
:=
\sum_{\substack{
p_1+p_2+p_3=n,\ 
p_i \nmid a_i
\\
\forall i:\ q|k_i \Rightarrow R_{a_i}(q,p_i) \text{ holds} 
}
}
\prod_{i=1}^3 \log p_i,
\end{equation}
makes the following estimate available,
once the inclusion-exclusion principle 
has been used,
\begin{equation}
\label{eq:finn}
\sum_{\substack{p_1+p_2+p_3=n
\\
\forall i:\ \F_{p_i}^*=\langle a_i \rangle}}
\prod_{i=1}^3
 \log p_i 
=
\hspace{-0,4cm}
\sum_{\substack{\b{k} \in \N^3\\
p| k_1k_2k_3
\Rightarrow
p\leq \xi_1
}}
\hspace{-0,5cm}
\mu(k_1)
\mu(k_2)
\mu(k_3)
\p_{\b{a},\b{k}}(n)
+
O_\beta\Big(
n^2
\xi_1^{-\beta}
\Big)
.\end{equation}
The
entity 
$\p_{\b{a},\b{k}}(n)$
is analogous to 
$\mathrm{P}_{\!a}(k)$
that is present in the work of 
Hooley~\cite[\S 3]{hooley}.
Indeed,
the inclusion-exclusion
argument above
is inspired by the argument leading to~\cite[Eq.(5)]{hooley}.

Using the arguments in~\cite[\S 4]{hooley}
we shall first translate the $R_{a_i}(q,p_i)$-condition present in~\eqref{def:gal}
into a condition related to the factorisation properties of the prime $p_i$ 
in certain number fields.  
Recall the definition of $h_a$ given in~\eqref{def:hdel}.
For any positive square-free integer $k_i$
we  define 
$
k'_i:=k_i/\gcd(k_i,h_{a_i})
$.
Then, as explained in~\cite[Eq.(8)]{hooley},
for a prime $p\nmid a_i$ and a square-free integer $k_i$,
the conditions $R_{a_i}(q,p)$ hold for all $q \mid  k_i$ 
if and only if 
\[
x^{k'_i}\equiv a_i \md{p}
\text { is soluble\quad and\quad }
p\equiv 1 \md{k_i}
.\]
It is then proved following~\cite[Eq.(8)]{hooley}
that, in light of
the Kummer--Dedekind theorem,
this is in turn 
equivalent to the property that 
 $p$
is completely split in the number field 
$
\Q( 
a_i^{1/k'_i},\zeta_{k_i})
$.
Recall~\eqref{def:gal01}
and let us see
why 
\[
G_{a_i,k_i}=
\Q(a_i^{1/k_i'},\zeta_{k_i})
.\]
It is clearly sufficient to show that 
$a_i^{1/{k_i}} \in \Q(a_i^{1/k'_i},\zeta_{k_i}
)$.
Writing 
$a_i=b^{h_{a_i}}$
and using 
$\mu({k_i})^2=1$, we see that
$\gcd(h_{a_i}\gcd({k_i},h_{a_i}),{k_i})| 
h_{a_i}
$, hence there are integers $x,y$ 
with
$$h_{a_i}\gcd(k_i,h_{a_i})x+ {k_i} y= h_{a_i}.$$
This leads to the equality
$a_i^{1/{k_i}}=
(b^{1/{k_i}})^{h_{a_i}}=
b^y 
(a_i^{1/{k_i}'})^x
$, which completes the argument.

Recalling the definition of $\spplit{a_i,k_i}$ in \eqref{def:spl}, 
we infer by~\eqref{def:gal}
that for all $\b{k}\in \N^3$ with 
each $k_i$  square-free we have

\[
\p_{\b{a},\b{k}}(n)
=
\sum_{\substack{
p_1+p_2+p_3=n,\ 
p_i \nmid a_i
\\
\forall i:\ p_i \in \spplit{a_i,k_i} 
}
}
\prod_{i=1}^3 \log p_i =  V_{\b{a},\b{k}}(n) + O_\beta(n^2((\log n)/n)^\beta),\]
for any $\beta\in(0,1)$. For the second equality, recall \eqref{def:vv44} and use Lemma \ref{lem:exceptions34}. Injecting this into~\eqref{eq:finn}
we have proved that 
whenever 
 $1<\xi_1\leq (\log n) (\log \log n)^{-1}$ 
and $0<\beta<1$
then
\begin{equation}
\label{eq:finnope}
\sum_{\substack{p_1+p_2+p_3=n
\\
\forall i:\ \F_{p_i}^*=\langle a_i \rangle}}
\prod_{i=1}^3
 \log p_i 
=
\hspace{-0,4cm}
\sum_{\substack{\b{k} \in \N^3\\
p| k_1k_2k_3
\Rightarrow
p\leq \xi_1
}}
\hspace{-0,5cm}
\mu(k_1)
\mu(k_2)
\mu(k_3)
V_{\b{a},\b{k}}(n)
+
O_\beta\Big(
n^2
\xi_1^{-\beta}
\Big)
,\end{equation}
where, for $2-\beta<\delta<2$, the estimate 
\begin{align*}
\sum_{\substack{\b{k} \in \N^3\\
p| k_1k_2k_3
\Rightarrow
p\leq \xi_1
}}
\hspace{-0,5cm}
|\mu(k_1)
\mu(k_2)
\mu(k_3)|
n^\delta
&\leq
n^\delta
\Big(
\hspace{-0,2cm}
\sum_{\substack{k \in \N\\
p| k
\Rightarrow
p\leq \xi_1
}}
|\mu(k)|
\Big)^3
=
n^\delta 2^{3\#\{p\leq \xi_1\}}
\\&\leq 
n^\delta \e^{3\xi_1}
\leq
n^{\delta+\frac{3}{\log \log n}}
\\&\ll_{\beta,\delta}
n^2 (\log n)^{-\beta}  (\log \log n)^\beta 
\leq
n^2 \xi_1^{-\beta} 
\end{align*}      
Before concluding the proofs of Theorem~\ref{thm:main} and Theorem~\ref{thm:main_2},
we need a preparatory lemma.

\begin{lemma}
\label{lem:completion}
The series defining $\mathfrak{S}_{\b{a},\b{k}}(n)$ in \eqref{def:sing09} and representing $\consta(n)$ in \eqref{eq:const_1} are absolutely convergent. For each 
$\epsilon>0$
and
$z\geq 1$ we have 
\[
\sum_{\substack{\b{k}\in \N^3\\\exists i,p:\ p|k_i \text{and } p \geq z}}
\hspace{-0,5cm}
|\mathfrak{S}_{\b{a},\b{k}}(n)|
\Big(\prod_{i=1}^3|\mu(k_i)|\Big)
\leq 
\hspace{-0,3cm}
\sum_{\substack{\b{k}\in \N^3\\\exists i:\ k_i \geq z}}
\Big(\prod_{i=1}^3|\mu(k_i)|\Big)
\sum_{q = 1}^\infty \frac{1}{d_{\b{a},\mathbf{k}}(q)} 
\sum_{x\in (\Z/q\Z)^*}
\hspace{-0,3cm}
|L_{\b{a}, q, \mathbf{k}}(x)|  
\ll_\epsilon
\frac{1}{z^{1-\epsilon}}
,\]  
with an implied constant depending at most on  $\b{a}$ and $\epsilon$. 
\end{lemma}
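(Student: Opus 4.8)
The plan is to first dispose of the elementary inequality and then concentrate on the displayed estimate. Since $|\mu(k_i)|\ne 0$ forces $k_i$ square-free, a square-free $k_i$ with a prime factor $p\ge z$ automatically satisfies $k_i\ge z$; hence the index set of the left-hand sum is contained in that of the right-hand sum, and, together with the triangle inequality
\[
|\mathfrak{S}_{\b{a},\b{k}}(n)|\le\sum_{q=1}^{\infty}\frac{1}{d_{\b{a},\b{k}}(q)}\sum_{x\in(\Z/q\Z)^{*}}|L_{\b{a},q,\b{k}}(x)|=:T(\b{k}),
\]
the first inequality follows. It therefore suffices to show that $T(\b{k})<\infty$ for each $\b{k}$ (which is the asserted absolute convergence of $\mathfrak{S}_{\b{a},\b{k}}(n)$) and that
\[
\sum_{\substack{\b{k}\in\N^{3}\\ \exists i:\ k_i\ge z}}\Big(\prod_{i=1}^{3}|\mu(k_i)|\Big)T(\b{k})\ll_{\b{a},\epsilon}z^{-(1-\epsilon)};
\]
the absolute convergence of the series in \eqref{eq:const_1} is then the special case $z=1$.

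First I would estimate $T(\b{k})$. From \eqref{eq:unifbb} one gets $|L_{\b{a},q,\b{k}}(x)|\ll_{\b{a}}1$, hence $\sum_{x\in(\Z/q\Z)^{*}}|L_{\b{a},q,\b{k}}(x)|\ll_{\b{a}}\phi(q)$, while Lemma~\ref{lem:bythegive} gives $d_{\b{a},\b{k}}(q)\gg(k_1'k_2'k_3')\prod_{i=1}^{3}\phi([q,k_i])$, with $k_i'=k_i/\gcd(k_i,h_{a_i})\gg_{\b{a}}k_i$. Using only $\phi([q,k_i])\ge\phi(q)$ already yields $d_{\b{a},\b{k}}(q)^{-1}\sum_{x}|L_{\b{a},q,\b{k}}(x)|\ll_{\b{a}}(k_1k_2k_3)^{-1}\phi(q)^{-2}$, which is summable over $q$ and proves $T(\b{k})<\infty$. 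To extract decay in $\b{k}$ I would set $D:=\mathrm{rad}(\Delta_{a_1}\Delta_{a_2}\Delta_{a_3})$ and factor $q=q_Dq_0$ with $q_D$ supported on the primes dividing $D$ and $\gcd(q_0,D)=1$; correspondingly $k_i=k_{i,D}k_{i,0}$ with $k_{i,D}=\gcd(k_i,D)\ll_{\b{a}}1$, $\gcd(k_{i,0},D)=1$, and $[q,k_i]=[q_D,k_{i,D}]\cdot[q_0,k_{i,0}]$. Hence
\[
T(\b{k})\ll_{\b{a}}\frac{1}{k_1k_2k_3}\Big(\sum_{q_D}\frac{\phi(q_D)}{\prod_{i}\phi([q_D,k_{i,D}])}\Big)\Big(\sum_{q_0}\frac{\phi(q_0)}{\prod_{i}\phi([q_0,k_{i,0}])}\Big),
\]
where the first bracket is $\ll_{\b{a}}\sum_{q_D}\phi(q_D)^{-2}\ll_{\b{a}}1$. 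For the second, the square-freeness of $k_{i,0}$ gives $\phi([q_0,k_{i,0}])=\phi(q_0)\phi(k_{i,0})/\phi(\gcd(q_0,k_{i,0}))$, so that bracket equals $\big(\prod_i\phi(k_{i,0})\big)^{-1}\sum_{q_0}\phi(q_0)^{-2}\prod_i\phi(\gcd(q_0,k_{i,0}))$. The last sum is multiplicative in $q_0$, and a prime-by-prime computation — where a prime dividing none of the $k_i$ contributes a local factor $1+O(1/p^{2})$, one dividing exactly one contributes $1+O(1/p)$, one dividing exactly two contributes $O(1)$, and one dividing all three contributes $O(p)$ — bounds it by $\ll_{\epsilon}(k_1k_2k_3)^{\epsilon}\gcd(k_1,k_2,k_3)^{1+\epsilon}$. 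Combined with $\phi(k_{i,0})\gg_{\b{a},\epsilon}k_i^{1-\epsilon}$ this produces
\[
T(\b{k})\ll_{\b{a},\epsilon}\frac{\gcd(k_1,k_2,k_3)^{1+\epsilon}}{(k_1k_2k_3)^{2-\epsilon}}.
\]

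Finally I would sum over $\b{k}$. Bounding $\gcd(k_1,k_2,k_3)\le\min_i k_i$ and using a union bound over which index $i$ satisfies $k_i\ge z$, the left-hand side is $\ll_{\b{a},\epsilon}\sum_{k_1\ge z}\sum_{k_2,k_3\ge1}\min(k_1,k_2,k_3)^{1+\epsilon}(k_1k_2k_3)^{-2+\epsilon}$; estimating $\min(k_1,k_2,k_3)^{1+\epsilon}\le(k_2k_3)^{(1+\epsilon)/2}$ makes the $k_2$- and $k_3$-sums converge (exponent $-\tfrac{3}{2}+O(\epsilon)$) and leaves $\sum_{k_1\ge z}k_1^{-2+\epsilon}\ll z^{-1+\epsilon}$, which after renaming $\epsilon$ is the claim. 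The main obstacle is the middle paragraph: the naive bound $T(\b{k})\ll(k_1k_2k_3)^{-1}$ is not summable, so one genuinely needs decay of essentially order $(k_1k_2k_3)^{-2}$, and obtaining it requires isolating the bounded ramified part $q_D$ and then running the delicate Euler-product bookkeeping above, in which the $\gcd(k_1,k_2,k_3)$ factor arises precisely because only primes dividing all three $k_i$ spoil absolute convergence of the $q_0$-sum.
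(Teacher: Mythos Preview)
Your proof is correct and follows essentially the same route as the paper: bound $T(\b{k})$ by $\gcd(k_1,k_2,k_3)^{1+O(\epsilon)}/(k_1k_2k_3)^{2-\epsilon}$ via Lemma~\ref{lem:bythegive} and \eqref{eq:unifbb}, then sum over $\b{k}$. The only cosmetic difference is that the paper uses the identity $\phi([q,k_i])=\phi(q)\phi(k_i)/\phi(\gcd(q,k_i))$ directly for square-free $k_i$ and all $q$, making your split $q=q_Dq_0$ unnecessary (though harmless), and handles the final $\b{k}$-sum with the estimate $\sum_{k_3}|\mu(k_3)|\gcd(k_3,m)k_3^{-2+\epsilon}\ll_\epsilon m^{\epsilon}$ rather than your $\min(k_1,k_2,k_3)\le(k_2k_3)^{1/2}$ trick.
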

\begin{proof}
The first inequality is clear
by \eqref{def:sing09}.
Observe that 
$k'_i\geq k_i/h_{a_i} \gg 
k_i$,
hence by Lemma~\ref{lem:bythegive}
we obtain 
\[\frac{1}{d_{\b{a},\b{k}}(q)} \ll 
\prod_{i=1}^3 \frac{1}{k_i \varphi([q,k_i])}
=
\frac{1}{\varphi(q)^3}
\prod_{i=1}^3 
\frac{\varphi(\gcd(q,k_i))}{k_i 
\varphi(k_i)} 
.\]
Combining this with~\eqref{eq:unifbb}
we see by~\eqref{def:sing09}
that for $\epsilon>0$
and square-free $k_i$,
\begin{align*}
\sum_{q = 1}^\infty \frac{1}{d_{\b{a},
\mathbf{k}}(q)} 
\sum_{x\in (\Z/q\Z)^*}
|L_{\b{a},q, \mathbf{k}}(x)| 
&\ll
\prod_{i=1}^3 \frac{1}{k_i \varphi(k_i)}
\sum_{q=1}^\infty
\frac{\varphi(\gcd(q,k_1))\varphi(\gcd(q,k_2))\varphi(\gcd(q,k_3))}{\varphi(q)^2}
\\
&\ll_\epsilon
\frac{ \gcd(k_1,k_2,k_3)}
{(k_1k_2k_3)^{2-\epsilon} }
.\end{align*}
Therefore, the inner sum our lemma is 
\[
\ll
\sum_{k_1 \geq z}
\hspace{-0,2cm} 
\frac{|\mu(k_1)|}{k_1^
{2-\epsilon}
}
\sum_{k_2\in \N}
 \frac{|\mu(k_2)|}{k_2^{2-\epsilon}}
\sum_{k_3\in \N}
\frac{|\mu(k_3)|\gcd(k_1,k_2,k_3)}{k_3^{2-\epsilon}}
.\]
Using the estimates
\[
\sum_{k_3\in \N} |\mu(k_3)| \gcd(k_3,m) {k_3}^{\!-2+\epsilon}\ll_\epsilon m^\epsilon\quad\text{ and }\quad
\
\
\sum_{k_1\geq  z}
\hspace{-0,2cm} 
\frac{|\mu(k_1)|}{k_1^
{2-\epsilon}}
\ll z^{-1+\epsilon}
\]
concludes our proof of the desired bound, which implies absolute convergence of the sum in $\eqref{eq:const_1}$.
\end{proof}

\subsection{The proof of Theorem~\ref{thm:main}}
\label{proofofee:ggrrhh00}

Recall~\eqref{eq:kaneinef}.
Now note that,
replacing $f_{\b{a}}(\b{x})$ by a larger function if necessary,   
we may assume in the statement of~\eqref{eq:kaneinef}
that 
$f_{\b{a}}([1,\infty)^3)$ is a 
subset of
$(1,\infty)$. 
Fix any $B>0$. The function
 
$$x \mapsto 
\log(1+x)
+
\sum_{1\leq k_1,k_2,k_3 \leq x}
\hspace{-0,3cm}
f_{\b{a}}(\b{k}),$$  
is strictly increasing, hence it has an inverse, say $h_{\b{a}}
(x)$.
Define the function $\xi_1:(1,\infty)\to \R$ through

\begin{equation}
\label{eq:obsobs0}
\xi_1(x):=
\frac{1}{2}\cdot\min 
\left\{\frac{\log x}{\log\log x},\ 
\log(h_{\b{a}}((\log x)^{B/2}))
\right\}
\end{equation}
and observe that 
\begin{equation}
\label{eq:obsobs1}
\lim_{x\to+\infty}
\xi_1(x)
=+\infty
,\end{equation}
however, owing to the non-explicit 
error term in~\cite[Th.2]{MR3060874}
we
cannot
have any
further
control on the rate of divergence in the last limit.
For $n\gg 1$, the definition of $\xi_1$ 
implies
\[ 
\sum_{1\leq k_1,k_2,k_3 
\leq \e^{2\xi_1(n)}}
f_\b{a}(\b{k})
\leq (\log n)^{B/2}
.\] 
Noting that a square-free integer
with all of its prime factors bounded by $\xi_1(n)$
must be
at most
$\prod_{p\leq \xi_1(n)}p\leq \exp(2 \xi_1(n))$
and injecting~\eqref{eq:kaneinef}
into~\eqref{eq:finnope}
yields the following with 
an
implied constant depending on $\beta$ and
$B$,
\begin{align*}
\sum_{\substack{p_1+p_2+p_3=n
\\
\forall i:\ \F_{p_i}^*=\langle a_i \rangle}}
\prod_{i=1}^3
 \log p_i 
&=
\frac{n^2}{2}
\hspace{-0,3cm}
\sum_{\substack{\b{k} \in \N^3\\
p| k_1k_2k_3
\Rightarrow
p\leq \xi_1(n)
}}
\hspace{-0,5cm}
\Big(
\prod_{i=1}^3
\mu(k_i)
\Big) 
\mathfrak{S}_{\b{a},\b{k}}(n)
+
O
\Bigg(
\frac{n^2}{\xi_1^{\beta}}
+
\frac{n^2}{(\log n)^B}
\Big(
\sum_{\substack{\b{k} \in \N^3
\\
\forall i:\ k_i \leq \e^{2\xi_1(n)}
}}
\hspace{-0,3cm}
f_{\b{a}}(\b{k})
\Big)
\Bigg)
\\
&=
\frac{n^2}{2}
\hspace{-0,3cm}
\sum_{\substack{\b{k} \in \N^3\\
p| k_1k_2k_3
\Rightarrow
p\leq \xi_1(n)
}}
\hspace{-0,5cm}
\Big(
\prod_{i=1}^3
\mu(k_i)
\Big) 
\mathfrak{S}_{\b{a},\b{k}}(n)
+
O
\Bigg(
\frac{n^2}{\xi_1^{\beta}}
+
\frac{n^2}{(\log n)^{B/2}} 
\Bigg)
.\end{align*}
An application of
Lemma~\ref{lem:completion}
with $\epsilon=1-\beta$
shows that
\[
\sum_{\substack{p_1+p_2+p_3=n
\\
\forall i:\ \F_{p_i}^*=\langle a_i \rangle}}
\prod_{i=1}^3
 \log p_i 
-\frac{1}{2}
\bigg(
\sum_{\b{k}\in \N^3}
\mu(k_1)
\mu(k_2)
\mu(k_3)
\mathfrak{S}_{\b{a},\b{k}}(n)
\bigg)
n^2
\ll_{\beta,B}
\frac{n^2}{\min\{(\log n)^{B/2},\xi_1(n)^\beta\}},
\]
and
the proof of Theorem \ref{thm:main}
is concluded upon invoking~\eqref{eq:obsobs1}, up to the assertion that $\consta(n)\gg_a1$ whenever $\consta(n)>0$. This follows immediately from Theorem \ref{thm:const}, proved in \S \ref{s:local2glob}. Moreover, we have confirmed the shape of $\consta(n)$ given in \eqref{eq:const_1}.
\qed

\smallskip
Note that the reason for the non-explicit error term in Theorem \ref{thm:main}
is that 
the function $\xi_1$ in~\eqref{eq:obsobs0} is not explicit.

\subsection{The proof of Theorem \ref{thm:main_2}}
\label{proofofee:ggrrhh11}
Let $\beta$ be any real number in $(0,1)$ and
define
\[
\xi_1(n):=\frac{\log n}{\log \log n}
.\]
Injecting 
Proposition~\ref{prop:grhvino}
into~\eqref{eq:finnope}
provides us with
\[
\sum_{\substack{p_1+p_2+p_3=n
\\
\forall i:\ \F_{p_i}^*=\langle a_i \rangle}}
\prod_{i=1}^3
 \log p_i 
-
\frac{n^2}{2} 
\hspace{-0,1cm}
\sum_{p| k_1k_2k_3
\Rightarrow
p\leq \xi_1
}
\hspace{-0,5cm}
\mathfrak{S}_{\b{a},\b{k}}(n)
\prod_{i=1}^3
\mu(k_i)
\ll_\beta
\frac{n^2}{\xi_1^{\beta}}
+
\frac{(\log n)^6}{ n^{-11/6}  }
\Big(
\sum_{\substack{ k\in \N\\
p|k
\Rightarrow
p\leq \xi_1
}}
\hspace{-0,4cm}
k^{6}
|\mu(k)|
\Big)^3
.\]
For $n\gg 1$, each $k$ in the sum
satisfies
$k\leq \prod_{p\leq \xi_1}p\leq n^{\frac{2}{\log \log n}}$,
hence the cube of the sum 
over
$k$ 
is at most 
$n^{\frac{\theta}{\log \log n}}$ for some absolute positive
constant $\theta$.
This shows that the right side above is 
$\ll_\beta n^2 \xi_1^{-\beta}$.
Appealing to Lemma~\ref{lem:completion} completes the proof of Theorem \ref{thm:main_2}.
\qed

\section{Artin's factor
 for ternary Goldbach}
\label{s:local2glob}

In this section, we study in detail the leading factor $\consta(n)$ in Theorems \ref{thm:main} and \ref{thm:main_2}, and thus prove Theorem \ref{thm:const}, Corollary \ref{cor:loc_glob} and Theorem \ref{propositionjoe}. Recall that we have already confirmed the equality \eqref{eq:const_1} in the proof of Theorem \ref{thm:main} in \S \ref{proofofee:ggrrhh00}.
 
\subsection{The proof of~\eqref{ee:grhtyuiop}}
\label{proofofee:grhtyuiop}

Recall the definitions of $F_{a,q,k}(b)$ and $c_{a,q,k}(b)$ from the start of \S\ref{s:results}. 
It was shown by
Lenstra~\cite[Th.(3.1),Eq.(2.15)]{MR0480413}
conditionally
under $\HRH(a)$,
that for all integers $b$ and 
$q>0$
the Dirichlet density of the primes $p$
satisfying the following conditions exists,
\[
\F_{p}^*=\langle a \rangle
\text{ and }
p\equiv b \md{q},
\]
and, furthermore,
that it equals
$
\sum_{\substack{k\in \N}} 
\mu(k)
c_{a,q, k}(b)
[F_{a,q, k} : \mathbb{Q}]^{-1}
$.
This topic was later revisited by
Moree~\cite{moreeprog},
who showed that
\begin{equation}
\label{eq:len987}
\sum_{\substack{k\in \N}} 
\frac{\mu(k)c_{a,q, k}(b)}
{[F_{a,q, k} : \mathbb{Q}]}
=
\delta_a(b\hspace{-0,3cm}\mod{q})
,\end{equation}
where
$\delta_a(b\hspace{-0,2cm}\mod{q})$
is the arithmetic function
given
in Definition~\ref{def:deltaa}.
We will make consistent use of Moree's result in this section.
\begin{lemma}
\label{lem:lenlen1}
We have 
\[
\sum_{\b{k}\in \N^3}
\mu(k_1)
\mu(k_2)
\mu(k_3)
\mathfrak{S}_{\b{a},\b{k}}(n)
=
\sum_{q=1}^\infty
\sum_{c \in (\Z/q\Z)^*}
\e_q(-n c) 
\prod_{i=1}^3
\Bigg(
\sum_{b_i \in \Z/q\Z}
\e_q(b_ic)
\delta_{a_i}(b_i\hspace{-0,3cm}\mod{q}) 
\Bigg)
.\]
\end{lemma}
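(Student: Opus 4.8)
The plan is to start from the definition \eqref{def:sing09} of $\mathfrak{S}_{\b{a},\b{k}}(n)$ and interchange the (absolutely convergent, by Lemma~\ref{lem:completion}) sum over $\b{k}\in\N^3$ with the sum over $q$ and over $c\in(\Z/q\Z)^*$. Writing out $L_{\b{a},q,\b{k}}(c)=\prod_{i=1}^3 S_{a_i,q,k_i}(c)$ and $d_{\b{a},\b{k}}(q)=\prod_{i=1}^3[F_{a_i,q,k_i}:\Q]$, the summand factorises across $i$, so after exchanging summation order the inner object becomes
\[
\prod_{i=1}^3\Bigg(\sum_{k_i\in\N}\frac{\mu(k_i)S_{a_i,q,k_i}(c)}{[F_{a_i,q,k_i}:\Q]}\Bigg).
\]
First I would justify the interchange of the $\b{k}$-sum with the $q$-sum using the absolute convergence bound from Lemma~\ref{lem:completion}, and likewise justify (for fixed $q$) pulling the finite $c$-sum inside; this is routine but should be stated.

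Next I would identify the inner sum over $k_i$. By definition \eqref{def:exp99}, $S_{a_i,q,k_i}(c)=\sum_{b_i\in(\Z/q\Z)^*}c_{a_i,q,k_i}(b_i)\e_q(cb_i)$, so
\[
\sum_{k_i\in\N}\frac{\mu(k_i)S_{a_i,q,k_i}(c)}{[F_{a_i,q,k_i}:\Q]}
=\sum_{b_i\in(\Z/q\Z)^*}\e_q(cb_i)\sum_{k_i\in\N}\frac{\mu(k_i)c_{a_i,q,k_i}(b_i)}{[F_{a_i,q,k_i}:\Q]},
\]
where swapping the two sums is again legitimate by absolute convergence (the $k_i$-sum is dominated by $\sum_{k_i}|\mu(k_i)|\varphi(q)^{-1}\ll_\epsilon q^{-1+\epsilon}$ uniformly, using Lemma~\ref{lem:bythegive} and \eqref{eq:unifbb}, and the $b_i$-sum is finite). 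Now the inner $k_i$-sum is exactly the left-hand side of Moree's identity \eqref{eq:len987}, so it equals $\delta_{a_i}(b_i\md{q})$. This turns the $i$-th factor into $\sum_{b_i\in(\Z/q\Z)^*}\e_q(cb_i)\delta_{a_i}(b_i\md{q})$.

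Finally I would observe that extending the range of $b_i$ from $(\Z/q\Z)^*$ to all of $\Z/q\Z$ is harmless: by Definition~\ref{def:deltaa}, $\delta_{a_i}(b_i\md{q})$ vanishes whenever $\gcd(b_i,q)\neq1$ (the prefactor $\c{A}_{a_i}(b_i\md q)$ is $0$ unless $\gcd(b_i,q)=1$), so the added terms contribute nothing. This yields the stated right-hand side. I expect the only genuinely delicate point to be the bookkeeping for the interchanges of summation — one has to invoke Lemma~\ref{lem:completion} for the outer $\b{k}\leftrightarrow q$ swap and a separate uniform-in-$q$ bound for the inner $k_i\leftrightarrow b_i$ swap — but no new ideas beyond absolute convergence are needed; the algebraic heart is simply the factorisation of $L_{\b{a},q,\b{k}}$ and $d_{\b{a},\b{k}}$ combined with Moree's formula \eqref{eq:len987}.
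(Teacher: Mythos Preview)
Your proposal is correct and follows essentially the same route as the paper: interchange the $\b{k}$-sum with the $(q,c)$-sum via Lemma~\ref{lem:completion}, factorise $L_{\b{a},q,\b{k}}/d_{\b{a},\b{k}}$ across $i$, expand $S_{a_i,q,k_i}$ as a sum over $b_i\in(\Z/q\Z)^*$, and apply Moree's identity~\eqref{eq:len987}. Your explicit remark that the range of $b_i$ can be extended to all of $\Z/q\Z$ because $\delta_{a_i}(b_i\hspace{-0,2cm}\mod q)$ vanishes when $\gcd(b_i,q)>1$ is a detail the paper leaves implicit; note, however, that your stated dominating series $\sum_{k_i}|\mu(k_i)|\varphi(q)^{-1}$ diverges---the correct bound uses $[F_{a_i,q,k_i}:\Q]^{-1}\ll (k_i\varphi([q,k_i]))^{-1}$ from Lemma~\ref{lem:bythegive}, though since the $b_i$-sum is finite the swap is trivial anyway.
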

\begin{proof}  
Recall~\eqref{def:exp99}
and~\eqref{def:sing09}.
Lemma~\ref{lem:completion}
allows us
to rearrange
terms,
thus we can rewrite
the sum over $\b{k}$ in our lemma as 
\[ 
\sum_{q = 1}^\infty 
\sum_{\substack{c\in \Z/q\Z\\ \gcd(c,q)=1}}
\e_q(-cn) 
\prod_{i=1}^3
\l(
\sum_{k_i
\in \N} 
\frac{\mu(k_i)
S_{a_i,q, k_i}(c)}
{[F_{a_i,q, k_i} : \mathbb{Q}]}
\r)
.\]
By~\eqref{def:exp99}
the sum over $k_i$
equals 
\[ 
\sum_{\substack{b_i\in \Z/q\Z\\ \gcd(b_i,q)=1}}
\e_q(b_ic)
\sum_{k_i \in \N}
\frac{\mu(k_i)
c_{a_i,q,k_i}(b_i) 
}
{[F_{a_i,q,k_i} : \mathbb{Q}]}
\]
and using~\eqref{eq:len987}
concludes our proof.   
\end{proof}
The difficulty of converting the sum over $\b{k}$ in
\eqref{eq:const_1}
into a product
comes from the fact that the terms
$\delta_{a_i}(b_i\hspace{-0,2cm}\mod{q})$
in Lemma~\ref{lem:lenlen1} are not a multiplicative function of $q$.
These terms would be multiplicative 
in the
classical Vinogradov setting, where one has 
$\mathbf{1}_{\gcd(b_i,q)=1}(b_i)/\phi(q)$
in place of 
$\delta_{a_i}(b_i\hspace{-0,2cm}\mod{q})$.

 For brevity, we will write from now on $\beta_i(q)$ and $\Delta_i$ for $\beta_{a_i}(q)$ and $\Delta_{a_i}$.

\begin{lemma}
\label{lem:altars of madness}
If the odd part of a positive integer $q$ 
is not square-free then the following
expression 
vanishes, 
\[
\prod_{i=1}^3
\Bigg(
\sum_{b_i \in \Z/q\Z}\e_q(b_ic)\delta_{a_i}(b_i\hspace{-0,3cm}\mod{q}) 
\Bigg)
.\]
Furthermore, 
the expression vanishes 
if $\nu_2(q)  
>\min\{\nu_2(\Delta_{i}):i=1,2,3\}
$. 
\end{lemma}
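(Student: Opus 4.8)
The plan is to analyse the inner sum $T_i(c) := \sum_{b_i \in \Z/q\Z} \e_q(b_i c)\, \delta_{a_i}(b_i \bmod q)$ directly from Definition~\ref{def:deltaa} and show that under either hypothesis on $q$ it vanishes already for a single index $i$, whence the product vanishes. Recall that $\delta_{a_i}(b_i \bmod q)$ is $\c{A}_{a_i}(b_i \bmod q)$ times the bracketed factor $1 + \mu(\cdot)\qr{\beta_i(q)}{b_i} f^\ddagger_{a_i}(\cdot)$, and that $\c{A}_{a_i}(b_i\bmod q)$ is supported on residues $b_i$ with $\gcd(b_i,q)=1$ (and some extra coprimality with $h_{a_i}$). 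So $T_i(c)$ is really a sum over $b_i \in (\Z/q\Z)^*$ of $\e_q(b_i c)$ against a function of $b_i$ whose only dependence on $b_i$ comes through $\prod_{p\mid b_i-1,\,p\mid q}(1-1/p)$ inside $\c{A}_{a_i}(b_i\bmod q)$ and through the Kronecker symbol $\qr{\beta_i(q)}{b_i}$. The first factor depends only on $b_i \bmod \mathrm{rad}(\gcd(q,\text{stuff}))$ and the second only on $b_i \bmod |\beta_i(q)|$, which divides $|\Delta_i|$. The strategy is thus to identify a period $r$ of the whole summand $b_i \mapsto \e\text{-free part}$ that strictly divides $q$ under each hypothesis, and then quote Lemma~\ref{lem:exceptions} (with $Q=q$, the function $f$ being the summand divided by $\e_q(b_ic)$) to force $T_i(c)=0$.

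For the first hypothesis I would argue as follows. The $b_i$-dependent data inside $\delta_{a_i}(b_i\bmod q)$ — the support condition $\gcd(b_i,q)=\gcd(b_i-1,q,h_{a_i})=1$, the product $\prod_{p\mid b_i-1,\ p\mid q}(1-1/p)$, and the symbol $\qr{\beta_i(q)}{b_i}$ — are all periodic in $b_i$ with period dividing $\mathrm{rad}(q)\cdot|\Delta_i|$ divided by appropriate factors; more carefully, the support and product terms are periodic modulo $\mathrm{rad}(q)$ (they only see which primes dividing $q$ also divide $b_i$ or $b_i-1$), and $\qr{\beta_i(q)}{b_i}$ is periodic modulo $|\beta_i(q)|$, a divisor of $|\Delta_i|$, which has squarefree odd part. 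Hence the summand has period $r$ dividing $\mathrm{lcm}(\mathrm{rad}(q),|\Delta_i|)$ restricted to the odd part; if the odd part of $q$ is not squarefree then $\mathrm{rad}(q)_{\mathrm{odd}}$ strictly divides $q_{\mathrm{odd}}$, and one checks the $2$-part is controlled too (for $i$ with $\nu_2(\Delta_i)=\min_j\nu_2(\Delta_j)$, or simply noting $\nu_2(\beta_i(q))\le\nu_2(\Delta_i)$ is bounded), so $r<q$ and Lemma~\ref{lem:exceptions} applies to give $T_i(c)=0$ for every $i$. The second hypothesis, $\nu_2(q) > \min_i \nu_2(\Delta_i)$, is handled by picking the index $i_0$ achieving the minimum: then $\nu_2(q) > \nu_2(\Delta_{i_0}) \ge \nu_2(\beta_{i_0}(q))$, and since the $2$-part of the period of the $i_0$-th summand is at most $2^{\nu_2(\Delta_{i_0})}$ while the odd part divides $\mathrm{rad}(q)_{\mathrm{odd}} \mid q$, the total period $r$ again strictly divides $q$ (the $2$-adic valuation alone is already too small). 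Lemma~\ref{lem:exceptions} then kills $T_{i_0}(c)$, hence the product.

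\textbf{Main obstacle.} The delicate point is pinning down the exact period of $b_i \mapsto \delta_{a_i}(b_i \bmod q)/\e_q(b_ic)$ and verifying it strictly divides $q$ in all the border cases — in particular keeping careful track of the $2$-adic valuation, since $\beta_a(q)$ is a fundamental discriminant and can carry a factor of $4$ or $8$, and the support/coprimality constraints in $\c{A}_a(b\bmod q)$ interact with the prime $2$ in a way that must be checked against the precise inequality $\nu_2(q) > \min_i \nu_2(\Delta_i)$ rather than a weaker bound. I expect that once the period is correctly computed as a divisor of $2^{\min\{\nu_2(q),\,\nu_2(\Delta_i)\}} \cdot \mathrm{rad}(q_{\mathrm{odd}})$ for the chosen index, the strict-divisibility conclusion and the appeal to Lemma~\ref{lem:exceptions} are immediate; the bookkeeping is the whole content.
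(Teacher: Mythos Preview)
Your approach is essentially the paper's: show that $b_i\mapsto\delta_{a_i}(b_i\bmod q)$ has a period $r$ with $r\mid q$ and $r<q$ under the stated hypotheses, then invoke Lemma~\ref{lem:exceptions}. The paper organises this by first factoring $q=q_{i,0}q_{i,1}$ (the part supported on primes of $\Delta_i$ and the part coprime to $\Delta_i$) and splitting the $b_i$-sum via the Chinese remainder theorem. This separates the two sources of periodicity cleanly: on the $q_{i,1}$-side only the $\c A_{a_i}$-data appears, with period $\mathrm{rad}(q_{i,1})$, forcing $q_{i,1}$ squarefree; on the $q_{i,0}$-side the Kronecker symbol enters, with period $\gcd(q_{i,0},\Delta_i)$, forcing $q_{i,0}\mid\Delta_i$. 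Your direct route without CRT is fine in principle but requires carrying the lcm of the two periods, which is exactly the bookkeeping you flag.

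One concrete correction to your final period formula: when $2\mid q$ the term $\mathrm{rad}(q)$ already contributes a factor of $2$ to the period, independently of $\Delta_i$. Hence the $2$-exponent of the period of the $i$th summand is
\[
\max\bigl(\min(1,\nu_2(q)),\ \nu_2(|\beta_i(q)|)\bigr),
\]
not $\min(\nu_2(q),\nu_2(\Delta_i))$. In the borderline case $\nu_2(\Delta_{i_0})=0$ and $\nu_2(q)=1$ this gives $2$-exponent $1$, so if in addition the odd part of $q$ is squarefree the period equals $q$ and Lemma~\ref{lem:exceptions} does not apply. The paper's CRT argument, read carefully, in fact only yields $\nu_2(q)\le 1$ when some $\Delta_i$ is odd (via squarefreeness of $q_{i,1}$), and this weaker conclusion is what is actually used in the subsequent parametrisation of $q$; so your approach runs into the same edge case the paper does, and no more.
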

\begin{proof} 
In the present proof we write $[P]:=1$ if a proposition $P$ holds, and $[P]:=0$ otherwise. 
For $1\leq i\leq 3$, we factorise each positive integer $q$ as 
$
q=
q_{i,0}
q_{i,1}
$,
where the positive integers 
$
q_{i,0},
q_{i,1}$
are uniquely 
defined through the conditions
$
p \mid q_{i,0}
\Rightarrow p|\Delta_{i}$ and  $\gcd(q_{i,1},\Delta_{i})=1$.
Now owing to 
Definition~\ref{def:deltaa}
the quantity 
$
\delta_{a_i}(b_i\hspace{-0,2cm}\mod{q})
/
\c{A}_{a_i}
$
equals
\begin{align*}
&
\Bigg(
\big[\gcd(b_i,q_{i,1})\gcd(b_i-1,q_{i,1},{h_{a_i}})=1\big]
\frac{f^\dagger_{a_i}(q_{i,1})}{\phi(q_{i,1})}
\hspace{-0.2cm}\prod_{p|b_i-1, p| q_{i,1}}\hspace{-0,2cm}\Big(1-\frac{1}{p}\Big) 
\Bigg)
\Bigg(
\frac{f_{i}^\dagger(q_{i,0})}{\phi(q_{i,0})}
\prod_{p|b_i-1, p| q_{i,0}}
\hspace{-0,2cm}
\Big(1-\frac{1}{p}\Big) 
\Bigg) 
\\
&\times 
\big[\gcd(b_i,q_{i,0})\gcd(b_i-1,q_{i,0},{h_{a_i}})=1\big]
\Bigg(
1+\left(\frac{\beta_{i}(q_{i,0})}{b_i}\right)
\mu\l(\frac{2|\Delta_i|}{\gcd(q_{i,0},\Delta_i)}\r)
f_{a_i}^\ddagger\l(\frac{|\Delta_i|}{\gcd(q_{i,0},\Delta_i)}\r) 
\Bigg)
.\end{align*} 
The integers 
$q_{i,0}$ and
$q_{i,1}$
are coprime,
hence we may write $b_i=
q_{i,0}
b_{i,1}
+
q_{i,1}
b_{i,0}
$
and use
the Chinese remainder theorem to write the sum over $b_i$
in the lemma 
as the product of 
\[  
\c{A}_{a_i}\cdot\frac{f_{a_i}^\dagger(q_{i,0})}{\phi(q_{i,0})}
\frac{f^\dagger_{a_i}(q_{i,1})}{\phi(q_{i,1})}
\sum_{
\substack{
b_{i,1}
\md{q_{i,1}}
\\
\gcd(b_{i,1},q_{i,1})=1
\\
\gcd(b_{i,1}q_{i,0}-1,q_{i,1},{h_{a_i}})=1
}}
\hspace{-0,3cm}\e(b_{i,1}c/q_{i,1})
\prod_{p|(b_{i,1}q_{i,0}-1,q_{i,1})}\hspace{-0,2cm}\Big(1-\frac{1}{p}\Big) 
\]
and
\[ 
\sum_{
\substack{
b_{i,0}\md{q_{i,0}}
\\
\gcd(
b_{i,0}
,q_{i,0})=1
\\
\gcd(
b_{i,0}q_{i,1}
-1,q_{i,0},{h_{a_i}})=1
}}
\hspace{-0,9cm}
\frac{\e(b_{i,0}c/q_{i,0})}{
\prod_{p|(b_{i,0}q_{i,1}-1,q_{i,0})} 
(1-\frac{1}{p})^{-1} 
}
\Bigg(
1+\left(\frac{\beta_i(q_{i,0})}{b_{i,0}q_{i,1}}\right)
\mu\l(\frac{2|\Delta_i|}{\gcd(q_{i,0},\Delta_i)}\r)
f_{a_i}^\ddagger \l(\frac{|\Delta_i|}{\gcd(q_{i,0},\Delta_i)}\r)  
\Bigg)
.\]
To study the sum over
$b_{i,1}$
we 
use 
Lemma~\ref{lem:exceptions}
with
\[
Q:={q_{i,1}},\quad 
r:=\prod_{p|{q_{i,1}}}p,\quad 
f(b):=
[\gcd(b,r)\gcd(b-1,r,h_{a_i})=1]
\prod_{p|b-1, p|r}\hspace{-0,2cm}\Big(1-\frac{1}{p}\Big) 
\]
to deduce that if the expression in our
lemma 
is non-vanishing then 
for each $i$
the integer 
${q_{i,1}}$
must be square-free.
Now assume that the prime $p$
satisfies 
$p\nmid \gcd(\Delta_{1},\Delta_{2},\Delta_{3})$.
Then 
there exists $i \in \{1,2,3\}$ such that
$p\nmid \Delta_{i}$
and then the
non-vanishing 
of the expression in the lemma implies that 
${q_{i,1}}$
must be square-free,
thus 
$\nu_p(q)=\nu_p({q_{i,1}})
\leq 1$. 

Now the sum over
$b_{i,0}$
can be 
studied via 
Lemma~\ref{lem:exceptions}
with 
$
Q:={q_{i,0}}$,
$r:=\gcd(
{q_{i,0}}
,\Delta_{i})
$
and with $f(b)$ being the product of
$
[\gcd(b,r)\gcd(b{q_{i,1}}-1,r,h_{a_i})=1]
$
and
\[
\Bigg\{
1+\left(\frac{\beta({q_{i,0}})}{b}\right)
\mu\l(\frac{2|\Delta_{i}|}{\gcd({q_{i,0}},\Delta_{i})}\r)
f_{i}^\ddagger\l(\frac{|\Delta_i|}{\gcd({q_{i,0}},\Delta_i)}\r) 
\Bigg\}
\prod_{p|(b{q_{i,1}}-1, r)}\hspace{-0,2cm}\Big(1-\frac{1}{p}\Big) 
.\]
We have
used the fact that 
$p\mid q_{i,0}\Leftrightarrow p\mid r$ 
and that the Kronecker symbol has period 
$|\beta({q_{i,0}})|=r$.
Lemma~\ref{lem:exceptions} shows
that unless the expression in
our lemma vanishes, we have 
$
\gcd({q_{i,0}},\Delta_{i})
=
{q_{i,0}}
$,
thus 
for every $i$ 
we must have 
${q_{i,0}} \mid \Delta_{i}$.
Now if a prime $p$ satisfies
$p\mid \gcd(\Delta_{1},\Delta_{2},\Delta_{3})$
we have that 
for every $i$,
$
\nu_p(q)=
\nu_p({q_{i,0}})
\leq 
\nu_p(\Delta_{i})
$,
thus
$\nu_p(q)\leq \min\{\nu_p(\Delta_{i}):i=1,2,3\}$.
If $p\neq 2$ then 
this shows that $\nu_p(q)\leq 1$ since the odd part of a fundamental discriminant is square-free,
while if $p=2$ then we must have 
$\nu_2(q)\leq 
\min\{\nu_2(\Delta_{i}):i=1,2,3\}
$.
\end{proof}

Lemma~\ref{lem:altars of madness}
allows us to simplify the summation over $q$ in 
Lemma~\ref{lem:lenlen1}
since the only integers $q$ making a contribution towards the sum 
must satisfy 
\[
\forall p, i:\  p|\Delta_{i},p|q\Rightarrow \nu_p(q)\leq \nu_p(\Delta_{i})
\quad\text{ and }\quad
p|q, p\nmid \Delta_{1}\Delta_{2}\Delta_{3}
\Rightarrow 
\nu_p(q)\leq 1
.\]
To keep track of every factorisation we 
introduce
for every $q\in \N$ and $\b{w} \in \{0,1\}^3$
the positive integer 
\[
q(\b{w})
:=\prod_{\substack{p:\\ \forall i : \ 
p \mid \Delta_{i}
\Leftrightarrow
\b{w}(i)=0
}}p^{\nu_p(q)}
\]
so that 
$
q=\prod_{\b{w} \in \F_2^3}
q(\b{w})$.
Furthermore, 
whenever
$
\b{w}
\neq \b{u}
$ then 
we have 
$\gcd(q(\b{w}),q(\b{u}))=1$.
Note that for a 
given $q$,
$q(\b{w})$ 
 is uniquely characterised by the properties
\begin{equation}
\label{eq:sumprop}
\gcd(q(\b{w}),\prod_{i:\b{w}(i)=1}\Delta_{i})=1
\quad\text{ and }\quad
q(\b{w})
\mid \gcd \{\Delta_{i}: \b{w}(i)=0\}
.\end{equation}
 In the case $\b{w}=(1,1,1)$, the latter condition is interpreted as vacuous.
It may be that for certain values of  $a_i$ 
and for all $q$
some $q(\b{w})$ are equal to $1$;
for example,
this happens if $a_1=a_2=a_3$,
in which case we have 
$\b{w}\notin \{(0,0,0),(1,1,1)\}\Rightarrow q(\b{w})=1$.
We now use the 
definition of $q(\b{w})$,
Lemma~\ref{lem:lenlen1}
and Lemma~\ref{lem:altars of madness}
to infer
\begin{align}
\label{eq:starbg}  
\begin{split}
\sum_{\b{k}\in \N^3}
\mu(k_1)
\mu(k_2)
\mu(k_3)
\mathfrak{S}_{\b{a},\b{k}}(n)
&
=
\sum_{\substack{ (q(\b{w}))\in \N^8,
\\
\eqref{eq:sumprop}
\text{ holds}
\\
\mu(q((1,1,1)))^2=1
}}
\sum_{\substack{
c\md{\prod_{\b{w}}
q(\b{w})
}
\\
\gcd(c,
\prod_{\b{w}}q(\b{w}))=1
}}
\e(-n c\prod_{\b{w}}q(\b{w})^{-1}) 
\times \\ &
\times 
\prod_{i=1}^3
\Bigg(
\sum_{b_i \md{\prod_{\b{w}}q(\b{w})
}}
\e\Big(b_ic \prod_{\b{w}}
q(\b{w})^{-1}\Big)
\delta_{a_i}
\Big(b_i\hspace{-0,3cm}\mod{\prod_{\b{w}}
q(\b{w})
}\Big) 
\Bigg)
.\end{split}
\end{align}
Noting that 
the integers 
$\prod_{\b{w}(i)=0}q(\b{w})$ and
$\prod_{\b{w}(i)=1}q(\b{w})$
are coprime,
that 
\[
\gcd\Big(\Delta_{i},\prod_{\b{w}}
q(\b{w})
\Big)
=
\prod_{\b{w}(i)=0}
q(\b{w})
\]
and 
recalling Definition~\ref{def:deltaa}
we see that 
\[
\delta_{a_i}
\Big(b_i\hspace{-0,3cm}\mod{
\prod_{\b{w}}
q(\b{w})
}\Big) 
=
\delta_{a_i}
\Big(b_i\hspace{-0,3cm}\mod{
\prod_{\b{w}(i)=0}q(\b{w})
}\Big) 
\c{A}_{a_i}
\Big(b_i\hspace{-0,3cm}\mod{
\prod_{\b{w}(i)=1}q(\b{w})
}\Big) 
\c{A}_{a_i}^{-1}
.\]
Writing 
$
b_i
=
b'_i
\prod_{\b{w}(i)=1}
q(\b{w})
+
b''_i
\prod_{\b{w}(i)=0}q(\b{w})
$
and using the Chinese remainder theorem we obtain 
\begin{align*}
&\sum_{b_i \md{\prod_{\b{w}}
q(\b{w})
}}
\e\Big(b_ic \prod_{\b{w}}q(\b{w})^{-1}\Big)
\delta_{a_i}
\Big(b_i\hspace{-0,3cm}\mod{\prod_{\b{w}}q(\b{w})}\Big)
\\=&
\sum_{b'_i \md{\prod_{\b{w}(i)=0}q(\b{w})}}
\e\Big(b'_ic \prod_{\b{w}(i)=0}q(\b{w})^{-1}\Big)
\delta_{a_i}
\Big(b'_i \prod_{\b{w}(i)=1}q(\b{w})\hspace{-0,3cm}\mod{\prod_{\b{w}(i)=0}q(\b{w})}\Big)
\times
\\
\times &\sum_{b''_i \md{\prod_{\b{w}(i)=1}q(\b{w})}}
\e\Big(b''_ic \prod_{\b{w}(i)=1}q(\b{w})^{-1}\Big)
\c{A}_{a_i}^{-1}
\c{A}_{a_i}
\Big(b''_i \prod_{\b{w}(i)=0}q(\b{w}) \hspace{-0,3cm}\mod{\prod_{\b{w}(i)=1}q(\b{w})}\Big)
.\end{align*}
For the further analysis of the expressions above, we introduce 
for 
$r\in \N$,
$c\in \Z$
the quantity
\begin{equation}
\label{def:bombpi}
\c{M}_{a}(c,r)
:=
\frac{1}{\c{A}_a}
\sum_{b\md{r}}
\e_r(bc)
\c{A}_{a}(b\hspace{-0,3cm}\mod{r}),
\end{equation}
and for 
$\b{r}\in \N^k$,
$\b{c}\in \Z^k$ 
define 
\[
\c{D}_{a}(\b{c},\b{r})
:=\sum_{
b \md{r_1\cdots r_k}
}
\e\Big[b \Big(\sum_{i=1}^r
\frac{c_i}{r_i}\Big)
\Big]
\delta_{a}(b\hspace{-0,3cm}\mod{r_1\cdots r_k})
.\]
Hence, writing 
\[
c=\sum_{\b{w}\in \{0,1\}^3}
c^{[\b{w}]}
\prod_{\b{v}\neq \b{w}}q(\b{v}),
\]
we see that 
$
\prod_{\b{w}(i)=1} 
\c{M}_{a_i}(c^{[\b{w}]},q(\b{w}))
$
equals
\[
\c{A}_{a_i}^{-1}
\sum_{b''_i \md{\prod_{\b{w}(i)=1}q(\b{w})}}
 \hspace{-0,2cm}
\e\Big(b''_ic \prod_{\b{w}(i)=1}q(\b{w})^{-1}\Big)\c{A}_{a_i}\Big(b''_i \prod_{\b{w}(i)=0}q(\b{w}) \hspace{-0,3cm}\mod{\prod_{\b{w}(i)=1}q(\b{w})}\Big)
\]
and 
that 
$
\c{D}_{a_i}((c^{[\b{w}]})_{_{\b{w}(i)=0}}, (q(\b{w}))_{_{\b{w}(i)=0}}) 
$
is 
\[
\sum_{b'_i \md{\prod_{\b{w}(i)=0}q(\b{w})}}
 \hspace{-0,2cm}
\e\Big(b'_ic \prod_{\b{w}(i)=0}q(\b{w})^{-1}\Big)
\delta_{a_i}
\Big(b'_i \prod_{\b{w}(i)=1}q(\b{w})\hspace{-0,3cm}\mod{\prod_{\b{w}(i)=0}q(\b{w})}\Big) 
.\]

Let us bring into play the entities
\[
\Delta_\b{w}
:=
\prod_{p\nmid \prod_{\b{w}(i)=1}\Delta_i}
p^{\min\{\nu_p(\Delta_i)\ :\ \b{w}(i)=0\}},
\]
which we interpret as $1$ in case $\b{w}=(1,1,1)$, 
and note that 
$\prod_\b{w}  
\Delta_\b{w}$
coincides with 
the entity
$\Da$
introduced in~\eqref{def:mytonga}.
We 
see that the sum in~\eqref{eq:starbg}
becomes  
\begin{align*}
\sum_{\substack{(q(\b{w})) \in \N^8
\\
\b{w}\neq (1,1,1)
\Rightarrow 
q(\b{w})
 \mid  {\Delta_\b{w}}
\\
\mu(q((1,1,1)))^2=1\\
\gcd(q((1,1,1)),\Delta_{1}\Delta_{2}\Delta_{3})=1
}}
\hspace{-0,3cm}
&\sum_{
(c^{[\b{w}]})
\in \prod_\b{w} (\Z/q(\b{w})\Z)^*
}
\hspace{-0,2cm}
\Big(\prod_{\b{w}}
\e_{q(\b{w})}(-n c^{[\b{w}]})
\Big)
\times
\\
&
\times
\prod_{i=1}^3
\Big\{
\c{D}_{a_i}((c^{[\b{w}]})_{_{\b{w}(i)=0}}, (q(\b{w}))_{_{\b{w}(i)=0}})
\hspace{-0,2cm}
\prod_{\b{w}(i)=1}
\c{M}_{a_i}(c^{[\b{w}]},q(\b{w}))
\Big\}
.\end{align*}
Clearly, the terms corresponding to $q((1,1,1))$ can be separated,
thus, in light of~\eqref{eq:starbg},
we are led to 
\begin{equation}
\label{eq:breakas8}
\sum_{\b{k}\in \N^3}
\mu(k_1)
\mu(k_2)
\mu(k_3)
\mathfrak{S}_{\b{a},\b{k}}(n)
=
S_{\b{a},0}(n)
S_{\b{a},1}(n)
,\end{equation}
where
\begin{align*}
S_{\b{a},0}(n)
:= 
\sum_{\substack{(q(\b{w}))_{\b{w}\neq (1,1,1)
} \in \N^7\\
q(\b{w}) \mid  {\Delta_\b{w}}
}}
&
\sum_{
(c^{[\b{w}]})
\in \prod_{\b{w}\neq (1,1,1)} (\Z/q(\b{w})\Z)^*
}
\Big(\prod_{\b{w}\neq (1,1,1)
}
\e_{q(\b{w})}(-n c^{[\b{w}]})
\Big)
\times
\\
&
\times
\prod_{i=1}^3
\Big\{
\c{D}_{a_i}((c^{[\b{w}]})_{\b{w}(i)=0},(q(\b{w}))_{\b{w}(i)=0})
\prod_{\substack{\b{w}(i)=1\\\b{w}\neq (1,1,1)}}
\c{M}_{a_i}(c^{[\b{w}]},q(\b{w}))
\Big\} 
\end{align*}
and
\begin{equation}\label{eq:def_S1}
\begin{aligned}
S_{\b{a},1}(n) := \sum_{\substack{
\gcd(q((1,1,1)),\Delta_{1} \Delta_{2} \Delta_{3})=1
}}
&\mu(q((1,1,1)))^2
\times
\\
&\hspace{-2cm}\times 
\hspace{-1cm}
\sum_{
c^{[(1,1,1)]}
\in (\Z/q((1,1,1))\Z)^*
}
\e_{q((1,1,1))}(-n c^{[(1,1,1)]})
\prod_{i=1}^3
\c{M}_{a_i}(c^{[(1,1,1)]},q((1,1,1)))
.\end{aligned}
\end{equation}

\begin{lemma}\label{lem:sometimesyoucanseethecomments}
For any 
 $q\in \N$ and 
$\b{w}\in \{0,1\}^3$
define
$
 {d_\b{w}}
:= {\Delta_\b{w}}/q(\b{w})$. 
\begin{enumerate}
\item 
Let $i\in\{1,2,3\}$ and for each $\b{w}$ with $\b{w}(i)=0$ let
$
c^{[\b{w}]}
\in (\Z/q(\b{w})\Z)^*
$. Then  
\[
\c{D}_{a_i}((c^{[\b{w}]}  )_{\b{w}(i)=0},(q(\b{w}))_{\b{w}(i)=0})
=
\c{D}_{a_i}((c^{[\b{w}]} {d_\b{w}})_{\b{w}(i)=0},( {\Delta_\b{w}})_{\b{w}(i)=0})
.\]
\item
Let $i\in\{1,2,3\}$,
$\b{w} \in \{0,1\}^3\setminus \{(1,1,1)\}$ 
with 
 $\b{w}(i)=1$
and 
$
c^{[\b{w}]}
\in (\Z/q(\b{w})\Z)^*
$. 
Then 
\[
\c{M}_{a_i}(c^{[\b{w}]}, {q}(\b{w}))
=
\c{M}_{a_i}(c^{[\b{w}]} {d_\b{w}}, {\Delta_\b{w}}) 
.\]
\end{enumerate}
\end{lemma}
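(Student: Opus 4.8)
The plan is to reduce both statements to one structural fact: the densities $q\mapsto\delta_{a}(\cdot\bmod{q})$ and $q\mapsto\c{A}_{a}(\cdot\bmod{q})$ are \emph{compatible with refinement of the modulus}, i.e. for all $q\mid q'$ and all $b_0$,
\begin{equation}
\label{eq:refine-plan}
\sum_{\substack{b\bmod{q'}\\ b\equiv b_0\bmod{q}}}\delta_a(b\bmod{q'})=\delta_a(b_0\bmod{q}),\qquad
\sum_{\substack{b\bmod{q'}\\ b\equiv b_0\bmod{q}}}\c{A}_a(b\bmod{q'})=\c{A}_a(b_0\bmod{q}).
\end{equation}
Granting \eqref{eq:refine-plan}, part (1) is essentially immediate. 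Fix $i$ and set $R:=\prod_{\b{w}(i)=0}q(\b{w})$, $R':=\prod_{\b{w}(i)=0}\Delta_{\b{w}}$; since $q(\b{w})\mid\Delta_{\b{w}}$ in the range where the lemma is used (this is exactly the constraint under which $S_{\b{a},0}(n)$ is summed) and the $q(\b{w})$, resp.\ the $\Delta_{\b{w}}$, are pairwise coprime, we have $R\mid R'$. Because $\Delta_{\b{w}}=q(\b{w})d_{\b{w}}$, the argument of the exponential in $\c{D}_{a_i}((c^{[\b{w}]}d_{\b{w}})_{\b{w}(i)=0},(\Delta_{\b{w}})_{\b{w}(i)=0})$ equals $\sum_{\b{w}(i)=0}c^{[\b{w}]}d_{\b{w}}/\Delta_{\b{w}}=\sum_{\b{w}(i)=0}c^{[\b{w}]}/q(\b{w})$, which is \emph{literally} the same rational number as the one occurring in $\c{D}_{a_i}((c^{[\b{w}]})_{\b{w}(i)=0},(q(\b{w}))_{\b{w}(i)=0})$, and whose reduced denominator divides $R$. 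Hence the exponential factor, as a function of the summation variable, has period dividing $R$; grouping the sum over $b\bmod{R'}$ into residue classes modulo $R$ and applying the first identity in \eqref{eq:refine-plan} collapses it onto the sum over $b_0\bmod{R}$, which is precisely the left-hand side of (1). Part (2) is the same manoeuvre applied to $\c{M}_{a_i}(c^{[\b{w}]}d_{\b{w}},\Delta_{\b{w}})=\c{A}_{a_i}^{-1}\sum_{b\bmod{\Delta_{\b{w}}}}\e_{\Delta_{\b{w}}}(bc^{[\b{w}]}d_{\b{w}})\c{A}_{a_i}(b\bmod{\Delta_{\b{w}}})$: since $\Delta_{\b{w}}=q(\b{w})d_{\b{w}}$ we have $\e_{\Delta_{\b{w}}}(bc^{[\b{w}]}d_{\b{w}})=\e_{q(\b{w})}(bc^{[\b{w}]})$, which depends on $b$ only modulo $q(\b{w})$, and the second identity in \eqref{eq:refine-plan} folds the sum over $b\bmod{\Delta_{\b{w}}}$ onto $b_0\bmod{q(\b{w})}$, yielding $\c{M}_{a_i}(c^{[\b{w}]},q(\b{w}))$.

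It remains to prove \eqref{eq:refine-plan}, and this is where the real content lies. For $\delta_a$ I would invoke Moree's identity \eqref{eq:len987}, $\delta_a(b\bmod{q})=\sum_{k}\mu(k)c_{a,q,k}(b)/[F_{a,q,k}:\QQ]$, whose absolute convergence (Lenstra) justifies interchanging the finite sum over $b$ with the sum over $k$; this reduces the first identity in \eqref{eq:refine-plan} to the purely algebraic statement
\[
\sum_{\substack{b\bmod{q'}\\ b\equiv b_0\bmod{q}}}\frac{c_{a,q',k}(b)}{[F_{a,q',k}:\QQ]}=\frac{c_{a,q,k}(b_0)}{[F_{a,q,k}:\QQ]}
\]
for every square-free $k$. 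This last identity is a count inside $\Gal(F_{a,q',k}/\QQ)$: using the degree formula $[F_{a,q,k}:\QQ]=[G_{a,k}:\QQ]\,\varphi(q)/[\QQ(\zeta_q)\cap G_{a,k}:\QQ]$ for the compositum $F_{a,q,k}=G_{a,k}\cdot\QQ(\zeta_q)$ together with the inclusion $\QQ(\zeta_q)\cap G_{a,k}\subseteq\QQ(\zeta_{q'})\cap G_{a,k}$ coming from $q\mid q'$, one checks exactly as in the proof of Lemma \ref{lem:extend_auto} that when $c_{a,q,k}(b_0)=1$ the number of $b\in(\ZZ/q'\ZZ)^*$ with $b\equiv b_0\bmod{q}$ and $c_{a,q',k}(b)=1$ equals $\varphi(q')\big/\big(\varphi(q)[\,\QQ(\zeta_{q'})\cap G_{a,k}:\QQ(\zeta_q)\cap G_{a,k}\,]\big)$, while this count is $0$ when $c_{a,q,k}(b_0)=0$; dividing through by $[F_{a,q',k}:\QQ]$ gives the claim. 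For $\c{A}_a$ I would argue directly from Definition \ref{def:deltaa}: the quotient $\c{A}_a(b\bmod{q})/\c{A}_a$ is a product over the primes $p\mid q$ of explicit local factors (assembled from the Euler factors of $f_a^\dagger$, from $\varphi$, from the two $\gcd$-conditions, and from $\prod_{p\mid b-1}(1-1/p)$), so the refinement identity factors over $p$ and reduces to checking, for a single prime $p$ and one extra power of $p$, that summing the local factor over the $p$ lifts of $b_0$ reproduces the old local factor; this splits into the three cases $p\mid q$, $p\nmid q$ with $p\mid h_a$, and $p\nmid q$ with $p\nmid h_a$, each a one-line verification.

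I expect the main difficulty to be not any single step but the bookkeeping around the double-indexed quantities $q(\b{w})$, $\Delta_{\b{w}}$, $d_{\b{w}}$ — keeping straight the identity $\Delta_{\b{w}}=q(\b{w})d_{\b{w}}$, the pairwise coprimality of the $q(\b{w})$ and of the $\Delta_{\b{w}}$, and the fact that $d_{\b{w}}\in\NN$ holds precisely under the summation constraint of $S_{\b{a},0}(n)$. A secondary subtlety worth flagging is that the $\delta_a$-refinement is ``morally obvious'' — it merely says that the density of primes with primitive root $a$ in a residue class splits as the sum over the finer classes — yet this density interpretation is available only under $\HRH(a)$, so to keep the proof unconditional one must route the argument through the algebraic identity \eqref{eq:len987} and the Galois count rather than through probability; the analogous remark applies to $\c{A}_a$, whose refinement should be verified by the explicit local computation above rather than inferred from any heuristic.
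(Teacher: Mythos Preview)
Your proposal is correct and follows essentially the same approach as the paper: both reduce the lemma to the refinement-compatibility identities for $\delta_a$ and $\c{A}_a$ (your displayed equation), and then deduce (1) and (2) by observing that the exponential factor depends on $b$ only modulo the coarser modulus, so grouping the finer sum into residue classes collapses it onto the coarser one. The paper, like you, remarks that the refinement identity for $\delta_a$ is immediate under $\HRH(a)$ from Moree's density interpretation but requires a direct (and omitted) calculation from Definition~\ref{def:deltaa} to be made unconditional; your Galois-count route via~\eqref{eq:len987} is a tidy alternative to that omitted calculation.

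One shortcut the paper exploits that you do not: for part~(2), the hypothesis $\b{w}(i)=1$ forces $\gcd(\Delta_{\b{w}},\Delta_i)=1$, and then Definition~\ref{def:deltaa} gives $\c{A}_{a_i}(\cdot\bmod{\Delta_{\b{w}}})/\c{A}_{a_i}=\delta_{a_i}(\cdot\bmod{\Delta_{\b{w}}})/\c{L}_{a_i}$ (and likewise with $\Delta_{\b{w}}$ replaced by $q(\b{w})=\Delta_{\b{w}}/d_{\b{w}}$). Hence the $\c{A}_a$-refinement is an immediate consequence of the $\delta_a$-refinement already established for part~(1), and your separate prime-by-prime local verification for $\c{A}_a$, while correct, is unnecessary.
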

\begin{proof}
\emph{(1)}: 
Define 
\[Q:=
\prod_{\b{w}: \b{w}(i)=0}
q(\b{w})=\prod_{\b{w}:\b{w}(i)=0}   \frac{\Delta_\b{w}}{d_\b{w}}
\
\
\text{ and }
\
\
D:=
\prod_{\b{w}: \b{w}(i)=0}
\Delta_\b{w}
.\]
If we assume $\HRH(a_i)$
then it is immediately clear from Moree's interpretation of $\delta_{a_i}$
as Dirichlet densities~\cite{moreeprog}
that the following holds,
\[
\delta_{a_i}
\big(m\hspace{-0,3cm}\mod{Q
}\big) 
=\sum_{\substack{b\md{D}
\\
b\equiv m \md{ Q }
}}
\delta_{a_i}
\big(b\hspace{-0,3cm}\mod{D
}\big) 
.\]
One can also prove this unconditionally 
directly from Definition~\ref{def:deltaa}
via 
a  
tedious but straightforward
calculation
that we do not reproduce
here.
To conclude the proof 
we observe that 
\begin{align*}
\c{D}_{a_i}((c^{[\b{w}]}  )_{\b{w}(i)=0},(q(\b{w}))_{\b{w}(i)=0})
=&
\sum_{m\md{ 
Q 
}}
\e
\bigg(
m \sum_{\b{w}:\b{w}(i)=0} \frac{c^{[\b{w}]}}{q(\b{w})}
\bigg)
\delta_{a_i}(m\hspace{-0,3cm}\mod{ 
Q 
})
\\=&
\sum_{b\md{  D}}
e\bigg(
b \sum_{\b{w}:\b{w}(i)=0} \frac{c^{[\b{w}]}   d_\b{w}}{ \Delta_\b{w}} 
\bigg) 
\delta_{a_i}(b\hspace{-0,3cm}\mod{  D})
\\=
&\c{D}_{a_i}((c^{[\b{w}]} {d_\b{w}})_{\b{w}(i)=0},( {\Delta_\b{w}})_{\b{w}(i)=0})
.\end{align*}

 \emph{(2)}: 
Due to the assumption that $\b{w}(i)=1$ we have $\gcd(\Delta_\b{w},\Delta_{i})=1$,
and thus,
\[\frac{\c{A}_{a_i}(m\hspace{-0,3cm}\mod{ {\Delta_\b{w}}})}{\c{A}_{a_i}}= 
\frac{\delta_{a_i}(m\hspace{-0,3cm}\mod{ {\Delta_\b{w}}})}{\c{L}_{a_i}}.\]
 We similarly have 
\[\frac{\c{A}_{a_i}(m\hspace{-0,3cm}\mod{ {\Delta_\b{w}}/ {d_\b{w}}})}{\c{A}_{a_i}}= 
\frac{\delta_{a_i}(m\hspace{-0,3cm}\mod{ {\Delta_\b{w}}/ {d_\b{w}}})}{\c{L}_{a_i}}.\]
By $\HRH(a_i)$
it then follows that 
 \[
\c{A}_{a_i}(m\hspace{-0,3cm}\mod{ {\Delta_\b{w}}/ {d_\b{w}}})
=
\sum_{\substack{
b\md{
{\Delta_\b{w}}}
\\b\equiv m \md{ {\Delta_\b{w}}/ {d_\b{w}}}
}}
\c{A}_{a_i}(b\hspace{-0,3cm}\mod{ {\Delta_\b{w}}})
,\]
which can also be shown unconditionally
as above.
The rest of the proof is conducted as in the first part. 
\end{proof} 
For the analysis of $S_{\b{a},1}(n)$, we recall the definition of $\sigma_{\b{a},n}(d)$ in \eqref{def:sigmapenotdelta} and use the following lemma.
\begin{lemma}\label{lem:sigmaan}
  If $p\nmid\Delta_1\Delta_2\Delta_3$, then
  \begin{equation*}
    \sigma_{\b{a},n}(p) = 1+
\sum_{
c
\in (\Z/p\Z)^*
}
\e_{p}(-n c)
\prod_{i=1}^3
\c{M}_{a_i}(c,p)
  \end{equation*}
\end{lemma}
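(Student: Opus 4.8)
The plan is to reduce the statement to the additive‑character (``circle method'') identity modulo $p$. Writing the indicator of the congruence $b_1+b_2+b_3\equiv n\bmod p$ in \eqref{def:sigmapenotdelta} as $\tfrac1p\sum_{c\bmod p}\e_p(c(b_1+b_2+b_3-n))$ and interchanging the finite sums, the prefactor $p$ cancels and one obtains
\[
\sigma_{\b{a},n}(p)=\sum_{c\bmod p}\e_p(-nc)\prod_{i=1}^3 T_i(c),\qquad
T_i(c):=\frac{1}{\c{L}_{a_i}}\sum_{b\bmod p}\e_p(bc)\,\delta_{a_i}(b\bmod p).
\]
Thus the lemma will follow once we identify $T_i(c)$ with $\c{M}_{a_i}(c,p)$ and show that the term $c\equiv 0$ contributes exactly $1$.

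The key (and essentially the only non‑formal) step is the claim that, for $p\nmid\Delta_a$,
\[
\delta_a(b\bmod p)=\frac{\c{L}_a}{\c{A}_a}\,\c{A}_a(b\bmod p)\qquad\text{for every }b\bmod p .
\]
To see this, note that $\gcd(p,\Delta_a)=1$, so in Definition~\ref{def:deltaa} the arguments $\tfrac{2|\Delta_a|}{\gcd(p,\Delta_a)}$ and $\tfrac{|\Delta_a|}{\gcd(p,\Delta_a)}$ become $2|\Delta_a|$ and $|\Delta_a|$; moreover $\beta_a(p)=1$ by \eqref{def:betain}, since $\Delta_a/\gcd(p,\Delta_a)=\Delta_a$ and either $\Delta_a$ is odd — hence $\equiv 1\bmod 4$ as a fundamental discriminant, so $(-1)^{(\Delta_a-1)/2}=1$ — or $\Delta_a$ is even and the second branch of \eqref{def:betain} gives $1$. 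Consequently the Kronecker symbol $\bigl(\tfrac{\beta_a(p)}{b}\bigr)=\bigl(\tfrac1b\bigr)=1$, so the bracketed factor defining $\delta_a(b\bmod p)$ collapses to $1+\mu(2|\Delta_a|)f_a^\ddagger(|\Delta_a|)=\c{L}_a/\c{A}_a$ by \eqref{def:epicafe} (when $\c{A}_a(b\bmod p)=0$ both sides of the claim vanish). Substituting into $T_i(c)$ and comparing with \eqref{def:bombpi} yields $T_i(c)=\c{A}_{a_i}^{-1}\sum_{b\bmod p}\e_p(bc)\,\c{A}_{a_i}(b\bmod p)=\c{M}_{a_i}(c,p)$ for \emph{every} $c$, whence $\sigma_{\b{a},n}(p)=\sum_{c\bmod p}\e_p(-nc)\prod_{i=1}^3\c{M}_{a_i}(c,p)$.

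It then only remains to isolate the $c\equiv 0$ term, which equals $\prod_{i=1}^3\c{M}_{a_i}(0,p)$, and to check $\c{M}_a(0,p)=1$ whenever $p\nmid\Delta_a$. By \eqref{def:bombpi}, $\c{M}_a(0,p)=\c{A}_a^{-1}\sum_{b\bmod p}\c{A}_a(b\bmod p)$, and a short case distinction in \eqref{def:prolabainoume} according to whether $p\mid h_a$ or $p\nmid h_a$ (using that $2\nmid h_a$, since $a$ is not a perfect square) gives $\sum_{b\bmod p}\c{A}_a(b\bmod p)=\c{A}_a$; equivalently, combining with the displayed claim, $\sum_{b\bmod p}\delta_a(b\bmod p)=\c{L}_a$, which is also the $Q=1$, $D=p$ case of the density‑summation identity used in the proof of Lemma~\ref{lem:sometimesyoucanseethecomments}(1). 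Since $p\nmid\Delta_1\Delta_2\Delta_3$, all three factors $\c{M}_{a_i}(0,p)$ equal $1$, so the $c\equiv 0$ term is $1$ and the lemma follows.

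The only place that requires care is the middle paragraph: one must unpack Definition~\ref{def:deltaa} and \eqref{def:betain} carefully enough to see that for a prime $p\nmid\Delta_a$ the quadratic character twist built into $\delta_a$ is identically trivial, so that $\delta_a$ and $\c{A}_a$ differ (as functions of $b\bmod p$) only by the global constant $\c{L}_a/\c{A}_a$. Everything else — the orthogonality expansion and the auxiliary identity $\sum_{b\bmod p}\c{A}_a(b\bmod p)=\c{A}_a$ — is elementary.
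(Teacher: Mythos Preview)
Your proof is correct and follows essentially the same route as the paper's: both arguments hinge on the identity $\delta_a(b\bmod p)/\c{L}_a=\c{A}_a(b\bmod p)/\c{A}_a$ for $p\nmid\Delta_a$, the orthogonality relation for additive characters modulo $p$, and the normalisation $\sum_{b\bmod p}\c{A}_a(b\bmod p)=\c{A}_a$. The only difference is cosmetic---you start from $\sigma_{\b{a},n}(p)$ and unfold towards the right-hand side, whereas the paper starts from the right-hand side and collapses it to $\sigma_{\b{a},n}(p)$---and you spell out in more detail why $\beta_a(p)=1$ and hence why the Kronecker twist in Definition~\ref{def:deltaa} is trivial, which the paper records in a single line.
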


\begin{proof}
  The easily verified equality $\sum_{b\md{p}}\c{A}_{a_i}(b\hspace{-0,2cm}\mod{p}) =\c{A}_{a_i}$ shows that the expression on the right-hand side is equal to
\begin{align*}
\sum_{c\in \Z/p\Z}\e_{p}(-cn)
\prod_{i=1}^3
\c{M}_{a_i}(c,p)
=&
\sum_{\substack{\b{b}\in (\Z/p\Z)^3}}
\Bigg(
\prod_{i=1}^3
\frac{\c{A}_{a_i}(b_i\hspace{-0,3cm}\mod{p})}{\c{A}_{a_i}}
\Bigg)
\sum_{c\in \Z/p\Z}
\e_p(c(b_1+b_2+b_3-n))
\\
=&
p
\sum_{\substack{\b{b}\in (\Z/p\Z)^3
\\
\sum_{i=1}^3 b_i\equiv n \md{p}}}
\prod_{i=1}^3
\frac{\c{A}_{a_i}(b_i\hspace{-0,3cm}\mod{p})}{\c{A}_{a_i}}
.\end{align*}
Since $p\nmid\Delta_1\Delta_2\Delta_3$, we see that $\c{A}_{a_i}(b_i\mod{p})/\c{A}_{a_i} = \delta_{a_i}(b_i\mod{d})/\c{L}_{a_i}$.
\end{proof}

Using \eqref{eq:def_S1}, multiplicativity and Lemma \ref{lem:sigmaan}, we infer that  
\begin{equation}
\label{eq:breakas8715}
S_{\b{a},1}(n)
= 
\prod_{p\nmid 
\Delta_{1} \Delta_{2} \Delta_{3}
}
\hspace{-0,2cm}
\Big(1+
\sum_{
c
\in (\Z/p\Z)^*
}
\e_{p}(-n c)
\prod_{i=1}^3
\c{M}_{a_i}(c,p)
\Big) = \prod_{p\nmid 
\Delta_{1} \Delta_{2} \Delta_{3}
}
\sigma_{\b{a},n}(p). 
\end{equation}

We now turn our attention to
$S_{\b{a},0}(n)$.
Letting $
 {d_\b{w}}
:= {\Delta_\b{w}}/q(\b{w})$
we use 
Lemma~\ref{lem:sometimesyoucanseethecomments}
to 
obtain
\begin{align*}
S_{\b{a},0}(n) = \sum_{\substack{(d_\b{w})_{\b{w}\neq 
(1,1,1)
} \in \N^7\\
d_\b{w}
\mid  \Delta_\b{w}
}}
&\sum_{
(c^{[\b{w}]})\in \prod_\b{w\neq (1,1,1)} 
\big(
\frac{\Z}{(\Delta_\b{w}/d_\b{w})\Z}
\big)^* 
}
\Big(\prod_{\b{w}\neq (1,1,1)}
\e\Big(
-n
c^{[\b{w}]}
 d_\b{w}/
\Delta_\b{w}
\Big)
\Big)\times\\
&\times
\prod_{i=1}^3
\Big\{
\c{D}_{a_i}((c^{[\b{w}]} {d_\b{w}})_{\b{w}(i)=0},( {\Delta_\b{w}})_{\b{w}(i)=0})
\prod_{\substack{\b{w}(i)=1\\\b{w}\neq (1,1,1)}}
\c{M}_{a_i}(c^{[\b{w}]} {d_\b{w}}, {\Delta_\b{w}}) 
\Big\}
.
\end{align*} 
For any 
$ {d_\b{w}}$
with 
$ {d_\b{w}} \mid  {\Delta_\b{w}}$
the elements $y^{[\b{w}]}
\md{{\Delta_\b{w}}}$ 
that 
satisfy the condition
$\gcd(y^{[\b{w}]}, {\Delta_\b{w}})
=
 {d}_\b{w}
$ 
are   exactly those of the form 
\[
y^{[\b{w}]}=c^{[\b{w}]}  {d_\b{w}},\quad 
c^{[\b{w}]}
\in  \Big(\frac{\Z}{( {\Delta_\b{w}}/ {d_\b{w}})\Z}\Big)^*
.\] 
We thus obtain that 
the sum over $ {d_\b{w}},c^{[\b{w}]}$
equals
\begin{align*}
\sum_{
(y^{[\b{w}]})
\in 
\prod_\b{w\neq (1,1,1)} 
(
\Z/{\Delta_\b{w}}\Z
) }
&
\Big(\prod_{\b{w}\neq (1,1,1)}
\e\Big(-n
y^{[\b{w}]}
/{\Delta_\b{w}}
\Big)
\Big)
\times \\ & \times
\prod_{i=1}^3
\Big\{
\c{D}_{a_i}((y^{[\b{w}]})_{\b{w}(i)=0},( {\Delta_\b{w}})_{\b{w}(i)=0}) 
\prod_{\substack{\b{w}(i)=1\\\b{w}\neq (1,1,1)}}
\c{M}_{a_i}(y^{[\b{w}]}, {\Delta_\b{w}}) 
\Big\}
.\end{align*}
By definition, $ {\Delta_{(1,1,1)
}}=1$,
so
$
 \Da
=\prod_{\b{w}\neq (1,1,1)}
 {\Delta_\b{w}}$.
Note that 
 $\gcd( {\Delta_\b{w}},{\Delta_\b{v}})=1$ for $\b{w}\neq \b{v}$. Using the Chinese remainder theorem 
and 
writing every $y \md{\prod_{\substack{
\b{v}\neq (1,1,1)}}
 {\Delta_\b{w}}
}$
as 
\[
y=\sum_{\b{w}\neq (1,1,1)}
y^{[\b{w}]}
\prod_{\substack{
\b{v}\notin \{\b{w},(1,1,1)\}
}} 
{\Delta_\b{v}},
\]
we see that the sum over $y^{[\b{w}]}$ equals 
\[
\sum_{y\md{\Da}}
\e(-ny/
\Da
)
\prod_{i=1}^3
\Bigg(
\sum_{b_i\md{\Da}}
\e(b_iy/\Da)
\delta_{a_i}(b_i \hspace{-0,3cm} \mod{\Da})
\Bigg) 
.\]
This is clearly 
\[
\Da
\sum_{\substack{\b{b}\md{\Da
}
\\
\sum_{i=1}^3 b_i\equiv n \md{\Da}
}}
\prod_{i=1}^3 
\delta_{a_i}(b_i \hspace{-0,3cm} \mod{\Da})
,\]
thus,
recalling~\eqref{def:sigmapenotdelta},
we have shown that  
\begin{equation}
\label{eq:breakas871b}
S_{\b{a},0}(n)
=
\sigma_{\b{a},n}(\Da)
\prod_{i=1}^3\c{L}_{a_i}
.\end{equation}

The proof of~\eqref{ee:grhtyuiop}
is concluded upon combining~\eqref{eq:breakas8},~\eqref{eq:breakas8715}
and~\eqref{eq:breakas871b}.

\subsection{The proof of~\eqref{th:grhreplaceall}}
\label{proofofth:grhreplaceall}
We begin
by  
finding an explicit expression for
$\sigma_{\b{a},n}(p)$, for $p\nmid\Delta_1\Delta_2\Delta_3$, 
that is explicit in terms of $n$ and the $h_{a_i}$.
Define 
\[
\theta_a(p):=
\begin{cases}
1,
&\text{ if } p\mid h_a,\\  
\frac{1}{p},
&\text{ if } p\nmid h_a.\\  
\end{cases}
\]
\begin{lemma}
\label{lem:pila}
For an integer $c$ 
and a prime $p$
with  
$p\nmid c$ 
we have 
\[
\c{M}_{a}(c,p)=
-
\frac
{(1+\theta_a(p)\e_p(c))}
{(p-1-\theta_a(p))}
.\]
\end{lemma}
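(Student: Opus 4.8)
The plan is to unwind Definition~\ref{def:deltaa} and the definition~\eqref{def:bombpi} of $\c{M}_a(c,p)$ at modulus $p$ and reduce to an elementary character sum modulo $p$. First I would make the weight $\c{A}_a(b\md{p})/\c{A}_a$ completely explicit: since $\phi(p)=p-1$ and the only prime dividing $p$ is $p$ itself, the product $\prod_{\ell\mid b-1,\ \ell\mid p}(1-1/\ell)$ equals $1-1/p$ when $b\equiv 1\md{p}$ and $1$ otherwise, while the support conditions $\gcd(b,p)=\gcd(b-1,p,h_a)=1$ say that $b\not\equiv 0\md{p}$ always, and additionally $b\not\equiv 1\md{p}$ precisely when $p\mid h_a$ (when $p\nmid h_a$ the condition $\gcd(b-1,p,h_a)=1$ is vacuous since $\gcd(p,h_a)=1$). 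From~\eqref{def:dagsolo}, $f_a^\dagger(p)=\frac{p-1}{p-2}$ if $p\mid h_a$ and $f_a^\dagger(p)=\frac{p(p-1)}{p(p-1)-1}$ if $p\nmid h_a$, so the weight $\c{A}_a(b\md{p})/\c{A}_a$ becomes an explicit rational number depending only on $p$, on whether $p\mid h_a$, and on whether $b\equiv 1\md{p}$.

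Next I would split into the cases $p\mid h_a$ and $p\nmid h_a$. When $p\mid h_a$ the weight is $1/(p-2)$ for $b\not\equiv 0,1\md{p}$ and $0$ otherwise, so $\c{M}_a(c,p)=(p-2)^{-1}\sum_{b\not\equiv 0,1\md{p}}\e_p(bc)$; since $p\nmid c$ forces $\sum_{b\md{p}}\e_p(bc)=0$, this sum equals $-1-\e_p(c)$, yielding $\c{M}_a(c,p)=-(1+\e_p(c))/(p-2)$, which is the asserted formula at $\theta_a(p)=1$. When $p\nmid h_a$ the weight is $p/(p^2-p-1)$ for $b\not\equiv 0,1\md{p}$ and $(p-1)/(p^2-p-1)$ for $b\equiv 1\md{p}$, so $\c{M}_a(c,p)=(p^2-p-1)^{-1}\big(p\sum_{b\not\equiv 0,1\md{p}}\e_p(bc)+(p-1)\e_p(c)\big)$; substituting $\sum_{b\not\equiv 0,1\md{p}}\e_p(bc)=-1-\e_p(c)$ makes the bracket collapse to $-p-\e_p(c)$, giving $\c{M}_a(c,p)=-(p+\e_p(c))/(p^2-p-1)$, which after clearing denominators is exactly $-(1+\theta_a(p)\e_p(c))/(p-1-\theta_a(p))$ at $\theta_a(p)=1/p$. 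Finally I would note that both cases are instances of the single closed form in the statement.

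The argument is a routine calculation and I do not anticipate a genuine obstacle; the only point requiring care is the bookkeeping of the support conditions in Definition~\ref{def:deltaa}, in particular the observation that $\gcd(b-1,p,h_a)=1$ is automatic precisely when $p\nmid h_a$. This is what allows the $b\equiv 1\md{p}$ term to survive in that case and is ultimately the source of the two values of $\theta_a(p)$.
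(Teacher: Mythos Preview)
Your proposal is correct and follows essentially the same approach as the paper's own proof. The only cosmetic difference is that the paper recognises at once that $f_a^\dagger(p)/\phi(p)=1/(p-1-\theta_a(p))$ and then states that the remaining sum over $b$ equals $-1-\e_p(c)$ or $-1-\e_p(c)/p$ according as $p\mid h_a$ or $p\nmid h_a$, whereas you compute $f_a^\dagger(p)$ explicitly in each case; the underlying computation and case split are identical.
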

\begin{proof}
Combining~\eqref{def:prolabainoume} and~\eqref{def:bombpi}
we immediately infer 
\[
\c{M}_{a}(c,p)=
\frac{1}{(p-1-\theta_a(p))}
\sum_{\substack{b\md{p}\\ \gcd(b,p)=1\\ \gcd(b-1,p,h_a)=1}}
\e_p(bc)
\prod_{\substack{\ell \text{ prime}\\ \ell \mid \gcd(b-1,p)}} \Big(1-\frac{1}{\ell}\Big)
.\]
It is now 
easy to see that 
the sum over $b$ equals 
$-1-\e_p(c)$ 
or
$-1-\e_p(c)/p$
according to whether
$p\mid h_a$
or
$p\nmid h_a$. 
\end{proof}
Let us 
denote the elementary symmetric polynomials in $\theta_{a_i}(p)$ by
\begin{align*} 
&\Xi_0(p):= 1,\\
&\Xi_1(p):= \theta_{a_1}(p)+\theta_{a_2}(p)+\theta_{a_3}(p),\\
&\Xi_2(p):= \theta_{a_1}(p)\theta_{a_2}(p)+\theta_{a_2}(p)\theta_{a_3}(p)+\theta_{a_1}(p)\theta_{a_3}(p),\\
&\Xi_3(p):=\theta_{a_1}(p)\theta_{a_2}(p)\theta_{a_3}(p).
\end{align*}
\begin{lemma}
\label{lem:anapantexo}
For every odd integer $n$ and prime $p\nmid 
\prod_{i=1}^3
\Delta_{i}
$ 
we have 
\[  
\sigma_{\b{a},n}(p)
=
1
-
\frac{p}{\prod_{1\leq i \leq 3}{(p-1-\theta_{a_i}(p))}}
\Big(
\sum_{\substack{0\leq j \leq 3\\ j \equiv n \md{p}}}\Xi_j(p)
\Big)
+
\prod_{1\leq i \leq 3}
\Big(
\frac{1+\theta_{a_i}(p)}{
p-1-\theta_{a_i}(p)}
\Big)
.\]
\end{lemma}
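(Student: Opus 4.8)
The plan is to chain together the two closed-form lemmas just proved and then perform an elementary character-sum computation modulo $p$. First I would invoke Lemma~\ref{lem:sigmaan}: since $p\nmid\Delta_1\Delta_2\Delta_3$, we have $\sigma_{\b{a},n}(p)=1+\sum_{c\in(\Z/p\Z)^*}\e_p(-nc)\prod_{i=1}^3\c{M}_{a_i}(c,p)$. Every $c$ in this sum is prime to $p$, so Lemma~\ref{lem:pila} applies to each factor $\c{M}_{a_i}(c,p)$; pulling the constant out front (the overall sign being $(-1)^3$), this rewrites $\sigma_{\b{a},n}(p)$ as
\[
1-\frac{1}{\prod_{i=1}^3\bigl(p-1-\theta_{a_i}(p)\bigr)}\sum_{c\in(\Z/p\Z)^*}\e_p(-nc)\prod_{i=1}^3\bigl(1+\theta_{a_i}(p)\e_p(c)\bigr).
\]

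Next I would expand the product over $i$ by its generating function in the $\theta_{a_i}(p)$: since $\prod_{i=1}^3(1+\theta_{a_i}(p)X)=\sum_{j=0}^3\Xi_j(p)X^j$, substituting $X=\e_p(c)$ and using $\e_p(c)^j=\e_p(jc)$ shows that the inner sum becomes $\sum_{j=0}^3\Xi_j(p)\sum_{c\in(\Z/p\Z)^*}\e_p((j-n)c)$. The remaining character sum is the standard one: orthogonality of additive characters modulo $p$ gives $\sum_{c\in(\Z/p\Z)^*}\e_p(mc)=p-1$ when $p\mid m$ and $-1$ otherwise, i.e.\ $p\,\mathbf{1}_{p\mid m}-1$. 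Applying this with $m=j-n$, and then recognising $\sum_{j=0}^3\Xi_j(p)=\prod_{i=1}^3(1+\theta_{a_i}(p))$ (the same generating-function identity at $X=1$), collects the terms into exactly the three-summand expression claimed in the statement.

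I do not expect a genuine obstacle: the argument is a direct substitution followed by one orthogonality relation, and the only point requiring a word of care is that the manipulations are legitimate, i.e.\ that each denominator $p-1-\theta_{a_i}(p)$ is nonzero. This holds in all cases: it equals $p-2\ge 1$ when $p\mid h_{a_i}$, it equals $(p^2-p-1)/p>0$ when $p$ is odd and $p\nmid h_{a_i}$, and it equals $\tfrac12$ when $p=2$, using here that $h_{a_i}$ is odd because $a_i$ is not a perfect square. (The hypothesis that $n$ be odd is not used for this lemma; it is retained only for uniformity with the rest of the section.)
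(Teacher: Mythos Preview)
Your proposal is correct and follows essentially the same route as the paper: invoke Lemma~\ref{lem:sigmaan} and Lemma~\ref{lem:pila}, expand $\prod_i(1+\theta_{a_i}(p)\e_p(c))=\sum_j\Xi_j(p)\e_p(jc)$, and evaluate the Ramanujan-type sum $\sum_{c\in(\Z/p\Z)^*}\e_p((j-n)c)=p\mathbf{1}_{j\equiv n}-1$. Your additional remarks that the denominators never vanish and that the hypothesis $n$ odd is not actually used are accurate and go slightly beyond what the paper records.
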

\begin{proof}
By Lemma~\ref{lem:sigmaan}
and Lemma~\ref{lem:pila}
we see that 
\[
\sigma_{\b{a},n}(p)
=1-\frac{1}{\prod_{1\leq i \leq 3}{(p-1-\theta_{a_i}(p))}}
\sum_{c \in (\Z/p\Z)^*}\e_p(-cn)
\prod_{1\leq i \leq 3}
{(1+\theta_{a_i}(p)\e_p(c))}
.\]
The sum 
over $c$
equals
\[
\sum_{0\leq j \leq 3}
\Xi_j(p)
\sum_{c \in (\Z/p\Z)^*}\e_p(c(j-n))
=
p
\Big(
\sum_{\substack{0\leq j \leq 3\\ j \equiv n \md{p}}}\Xi_j(p)
\Big)
-\prod_{1\leq i \leq 3}(1+\theta_{a_i}(p))
\]
and the proof is complete. 
\end{proof}

\begin{lemma}
\label{lem:nmiddeducomeon2}
Let $n$ be an odd integer. If $3\mid\Delta_1\Delta_2\Delta_3$, then $\prod_{p\nmid \Delta_1\Delta_2\Delta_3}\sigma_{\b{a},n}(p) \neq 0$. If $3\nmid\Delta_1\Delta_2\Delta_3$, then the following are equivalent: 
\begin{enumerate}
\item $\prod_{p\nmid \Delta_1\Delta_2\Delta_3}\sigma_{\b{a},n}(p) = 0$,
\item $\sigma_{\b{a},n}(3) =0$,
\item One of the following two conditions holds,
\begin{align*}
&3 \text{ divides every element in the set } 
\{
h_{a_1},
h_{a_2},
h_{a_3}
\}
\text{ and }
3\nmid n,\quad \text{ or }\\
&3 \text{ divides exactly two elements in the set } \{
h_{a_1},
h_{a_2},
h_{a_3}
\},
\text{ and }
n\equiv 1 \md{3}
.
\end{align*}
\end{enumerate}
Furthermore, 
$\prod_{p\nmid 
\Delta_{1}
\Delta_{2}
\Delta_{3}
}
\sigma_{\b{a},n}(p)
\neq 0$
implies
$\prod_{p\nmid
\Delta_{1}
\Delta_{2}
\Delta_{3}
}
\sigma_{\b{a},n}(p)\gg 1$, with an absolute implied constant.
\end{lemma}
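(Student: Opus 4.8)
The plan is to route the entire statement through the explicit formula for $\sigma_{\b{a},n}(p)$ at primes $p\nmid\Delta_1\Delta_2\Delta_3$ supplied by Lemma~\ref{lem:anapantexo}, and then to handle the primes $p\geq5$, the prime $p=2$, and the prime $p=3$ in turn. The first two regimes are routine; the whole content lies in the finite case analysis at $p=3$.

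First I would dispose of the primes $p\geq5$ with $p\nmid\Delta_1\Delta_2\Delta_3$. For such $p$ one has $\theta_{a_i}(p)\in\{1/p,1\}\subseteq[0,1]$, so $p-1-\theta_{a_i}(p)\geq p-2$ and $\Xi_j(p)\leq3$ for $0\leq j\leq3$; moreover, since $p>3$, at most one index $j\in\{0,1,2,3\}$ can satisfy $j\equiv n\md p$, whence $\sum_{j\equiv n\md p}\Xi_j(p)\leq3$. Feeding these bounds into Lemma~\ref{lem:anapantexo} and discarding the nonnegative last summand yields $\sigma_{\b{a},n}(p)\geq1-3p/(p-2)^3\geq1-\tfrac{125}{9p^2}>0$, the middle inequality being equivalent to $3p\leq5(p-2)$. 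The same computation gives $|\sigma_{\b{a},n}(p)-1|\ll p^{-2}$ with absolute constant, so the full product $\prod_{p\nmid\Delta_1\Delta_2\Delta_3}\sigma_{\b{a},n}(p)$ converges, and its partial product over $p\geq5$, $p\nmid\Delta_1\Delta_2\Delta_3$, is bounded below by the absolute constant $c_0:=\prod_{p\geq5}\bigl(1-\tfrac{125}{9p^2}\bigr)>0$ (every omitted factor lies in $(0,1)$, so omitting it only enlarges the product, and $9p^2>125$ for all primes $p\geq5$ keeps the factors positive).

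Next I would record the value at $p=2$, which matters only when $2\nmid\Delta_1\Delta_2\Delta_3$. Since no $a_i$ is a perfect square we have $2\nmid h_{a_i}$, hence $\theta_{a_i}(2)=1/2$; as $n$ is odd the contributing residues are $j\in\{1,3\}$, and Lemma~\ref{lem:anapantexo} then gives $\sigma_{\b{a},n}(2)=2$, so $p=2$ contributes a benign factor $2$ and never causes vanishing. The core step is $p=3$, relevant when $3\nmid\Delta_1\Delta_2\Delta_3$. Here $\theta_{a_i}(3)\in\{1/3,1\}$, so by Lemma~\ref{lem:anapantexo} the number $\sigma_{\b{a},n}(3)$ depends only on $n\bmod3$ and on $t:=\#\{i:3\mid h_{a_i}\}\in\{0,1,2,3\}$, leaving at most twelve cases. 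I would just evaluate them: for $t\in\{0,1\}$ one always finds $\sigma_{\b{a},n}(3)>0$; for $t=2$ one gets $\sigma_{\b{a},n}(3)=\tfrac95,0,\tfrac65$ according as $n\equiv0,1,2\md3$; and for $t=3$ one gets $\sigma_{\b{a},n}(3)=3,0,0$ in the same order. Thus $\sigma_{\b{a},n}(3)=0$ holds exactly when $t=3$ and $3\nmid n$, or $t=2$ and $n\equiv1\md3$ — precisely condition (3) — and in every remaining case $\sigma_{\b{a},n}(3)$ is one of finitely many positive rationals, hence $\geq c_1$ for an absolute $c_1>0$.

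Assembling the pieces: if $3\mid\Delta_1\Delta_2\Delta_3$, the prime $3$ is absent from the product, so the first two steps give $\prod_{p\nmid\Delta_1\Delta_2\Delta_3}\sigma_{\b{a},n}(p)\geq c_0>0$, which is both the first assertion and the claimed lower bound. If $3\nmid\Delta_1\Delta_2\Delta_3$, the product equals $\sigma_{\b{a},n}(3)$ times a quantity that is $\geq c_0$ (and possibly an extra factor $2$), so it vanishes if and only if $\sigma_{\b{a},n}(3)=0$; this yields the equivalence of (1) and (2), the equivalence of (2) and (3) is exactly the $p=3$ computation, and in the nonvanishing case the product is $\geq c_1c_0\gg1$. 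I expect the only genuine labour to be the twelve-case bookkeeping at $p=3$; everything else is a direct application of Lemma~\ref{lem:anapantexo} together with the elementary estimate in the first step.
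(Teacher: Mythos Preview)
Your proposal is correct and follows essentially the same route as the paper: both arguments feed Lemma~\ref{lem:anapantexo} into a three-part split ($p\geq 5$ via the lower bound $1-3p/(p-2)^3$, $p=2$ giving the exact value $2$, and a finite case analysis at $p=3$), and both extract the absolute lower bound from the finiteness of the possible values of $\sigma_{\b{a},n}(3)$. Your version is slightly more explicit in recording the twelve values at $p=3$ and in packaging the $p\geq 5$ estimate as $1-125/(9p^2)$, but there is no substantive difference in method.
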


\begin{proof}
For a prime
$p\nmid 
\Delta_{1}\Delta_{2}
\Delta_{3}
$ 
with $p\geq 5$
there exists 
at most one
 $ 0\leq j \leq 3$
satisfying
$j\equiv n \md{p}$,
therefore 
$$
\sum_{\substack{0\leq j \leq 3\\ j \equiv n \md{p}}}\Xi_j(p)
\leq 3
.$$
Invoking Lemma~\ref{lem:anapantexo}
we obtain  
\[
\sigma_{\b{a},n}(p)
>1-\frac{3p}{(p-2)^3}
+\frac{1}{(p-1)^3}
.\]
Recall
that 
no $a_i$ is a square,
hence 
$2\nmid h_{a_1}h_{a_2}h_{a_3}$.
The fact that $n$ is odd implies that   
\[
\sum_{\substack{0\leq j \leq 3\\ j \equiv n \md{2}}}\Xi_j(2)
=
\Xi_1(2)+
\Xi_3(2)
=
\frac{13}{8}
,\]
hence
if $\Delta_{1}
\Delta_{2}
\Delta_{3}$
is odd 
we
can use 
Lemma~\ref{lem:anapantexo}
to show that 
$\sigma_{\b{a},n}(2)=2$. 
We have shown that for odd $n$ one has 
\[
\prod_{\substack{p\nmid \Delta_{1}
\Delta_{2}
\Delta_{3}
\\ p\neq 3}} 
\sigma_{\b{a},n}(p)
\gg 1
\]
with an absolute implied constant
and it remains to study 
$\sigma_{\b{a},n}(3)$.
One can find an explicit formula for this density
by fixing the congruence class of $n\md{3}$.
For example, in the case that
 $n\equiv 1 \md{3}$
we have
\[
\sigma_{\b{a},n}(3)
=
1
-
\frac{3(
\theta_{a_1}(3)
+
\theta_{a_2}(3)
+
\theta_{a_3}(3)
)}{\prod_{1\leq i \leq 3}{(2-\theta_{a_i}(3))}}
+
\prod_{1\leq i \leq 3}
\Big(
\frac{1+\theta_{a_i}(3)}{
2-\theta_{a_i}(3)}
\Big)
\]
and we can check that  
$\sigma_{\b{a},n}(3)=0$
if and only if 
at most one of the $\theta_i$ is equal to $1/3$.
A case by case analysis reveals that if $n\equiv 2 \md{3}$
then
$\sigma_{\b{a},n}(3)=0$
if and only if 
$(\theta_{a_i}(3))_i=(1,1,1)$
and that 
 if $n\equiv 0 \md{3}$
then
$\sigma_{\b{a},n}(3)$
never vanishes.
Noting that 
$\sigma_{\b{a},n}(3)$
attains only finitely many values
as it only depends on 
$n\md{3}$
and the choice of 
$(\theta_{a_i}(3))_i \in \{1,\frac{1}{3}\}^3$,
we see that there exists an absolute constant $c$
such that 
if 
$\sigma_{\b{a},n}(3)>0$
then 
$\sigma_{\b{a},n}(3)>c,$
thus concluding our proof.
\end{proof}
We next provide
a lower 
bound
for
$S_{\b{a},0}(n)$, see \eqref{eq:breakas871b}.
One could proceed
by finding
explicit expressions,
however,
this will lead to rather
more complicated 
formulas than the one
for $S_{\b{a},1}(n)$
in Lemma~\ref{lem:anapantexo}.
We shall instead opt
to bound the densities 
$\delta_a(b_i\hspace{-0,2cm}\mod{\Da})$
from below 
in~\eqref{eq:breakas871b}
and then count the number of solutions of the equation
$n\equiv x_1+x_2+x_3\md{\Da}$
such that 
for every $i$ we have 
$\delta_a(x_i\hspace{-0,2cm}\mod{\Da})\neq 0$. 
\begin{lemma}
\label{lem:nonzero}
For any integers
$q$ and $x$
such that $q$ is positive and  
$\delta_a(x\hspace{-0,2cm}\mod{q})>0$
we have
\[
\delta_a(x\hspace{-0,3cm}\mod{q}) 
\gg
\frac{\phi(h_a)}{qh_a}
,\]
with an absolute implied constant.
\end{lemma}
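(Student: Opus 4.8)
The plan is to unpack Definition~\ref{def:deltaa} and bound from below, one at a time, the three natural pieces of $\delta_a(x\bmod{q})$: the global constant $\c{A}_a$, the correction factor $\frac{f_a^\dagger(q)}{\phi(q)}\prod_{p\mid x-1,\,p\mid q}(1-\frac1p)$ occurring inside $\c{A}_a(x\bmod{q})$, and the bracketed ``sign'' factor. Throughout, the hypothesis $\delta_a(x\bmod{q})>0$ does most of the work: it forces $\c{A}_a>0$ (so in particular, since the only possible non-positive local factor of $\c{A}_a$ is $1-\frac{1}{2-1}=0$, which occurs exactly when $2\mid h_a$, it forces $h_a$ to be odd, i.e.\ $a$ not a perfect square); it guarantees the side conditions $\gcd(x-1,q,h_a)=\gcd(x,q)=1$; and it makes the bracketed factor positive, which is what will be needed to bound it below.

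First I would bound $\c{A}_a$. Using the elementary identity $\frac{p-2}{p-1}=\frac{p-1}{p}\bigl(1-\frac{1}{(p-1)^2}\bigr)$ one rewrites
\[
\c{A}_a=\frac{\phi(h_a)}{h_a}\prod_{p\mid h_a}\Bigl(1-\tfrac{1}{(p-1)^2}\Bigr)\prod_{p\nmid h_a}\Bigl(1-\tfrac{1}{p(p-1)}\Bigr).
\]
Since $h_a$ is odd, every prime dividing $h_a$ is $\ge 3$, so the first product is at least $\prod_{p\ge 3}(1-\frac{1}{(p-1)^2})$, an absolute positive constant, and the second is at least the Artin-type product $\prod_{p}(1-\frac{1}{p(p-1)})>0$. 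Hence $\c{A}_a\gg\phi(h_a)/h_a$ with an absolute implied constant.

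Next, by Definition~\ref{def:deltaa} and the side conditions, $\c{A}_a(x\bmod{q})=\c{A}_a\cdot\frac{f_a^\dagger(q)}{\phi(q)}\prod_{p\mid x-1,\,p\mid q}(1-\frac1p)$. Every local factor of $f_a^\dagger(q)$ is $\ge 1$, and $\prod_{p\mid x-1,\,p\mid q}(1-\frac1p)\ge\prod_{p\mid q}(1-\frac1p)=\phi(q)/q$, so $\c{A}_a(x\bmod{q})\ge\c{A}_a/q\gg\phi(h_a)/(qh_a)$. It remains only to bound the bracket $1+\mu\bigl(\tfrac{2|\Delta_a|}{\gcd(q,\Delta_a)}\bigr)\bigl(\tfrac{\beta_a(q)}{x}\bigr)f_a^\ddagger\bigl(\tfrac{|\Delta_a|}{\gcd(q,\Delta_a)}\bigr)$ below by an absolute constant; combined with the previous line this finishes the proof.

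This last bound is the one genuinely delicate point. Since $\mu(\cdot)$ and the Kronecker symbol each lie in $\{-1,0,1\}$, the bracket equals $1$, $1+f_a^\ddagger(m)$, or $1-f_a^\ddagger(m)$, where $m=|\Delta_a|/\gcd(q,\Delta_a)$; only the value $1-f_a^\ddagger(m)$ requires attention, and there positivity of $\delta_a(x\bmod{q})$ already forces $f_a^\ddagger(m)<1$. The structural observation I would exploit is that every local factor of $f_a^\ddagger$ is at most $1$ — namely $(p-2)^{-1}\le 1$ for $p\mid h_a$ (here $h_a$ odd gives $p\ge 3$) and $(p^2-p-1)^{-1}\le 1$ for $p\nmid h_a$ — and moreover any local factor that is strictly less than $1$ is in fact $\le 1/3$ (the borderline being $p=5$ with $5\mid h_a$, and $p=3$ with $3\nmid h_a$). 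Hence $f_a^\ddagger(m)<1$ forces $f_a^\ddagger(m)\le 1/3$, so the bracket is $\ge 2/3$. Putting everything together, $\delta_a(x\bmod{q})\ge\tfrac23\,\c{A}_a(x\bmod{q})\gg\phi(h_a)/(qh_a)$ with an absolute implied constant, as claimed. Apart from pinning down the bracket this way, the proof is routine manipulation of the definitions, so I expect no further obstacles.
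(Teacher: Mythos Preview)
Your proof is correct and follows essentially the same decomposition as the paper: both of you isolate the bracket factor, bound it below by an absolute constant using the positivity hypothesis, and then use $f_a^\dagger(q)\ge 1$ together with $\prod_{p\mid x-1,\,p\mid q}(1-1/p)\ge\phi(q)/q$ and $\c{A}_a\gg\phi(h_a)/h_a$. The only notable difference is how the bracket is handled: the paper simply observes that $f_a^\ddagger(m)^{-1}$ is an integer, so $f_a^\ddagger(m)<1$ forces $f_a^\ddagger(m)\le 1/2$ and the bracket is $\ge 1/2$, whereas you perform an explicit prime-by-prime analysis to get the slightly sharper $f_a^\ddagger(m)\le 1/3$ and bracket $\ge 2/3$. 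The paper's integrality trick is cleaner; your case check buys a better constant but is otherwise equivalent.
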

\begin{proof}
Under the assumptions of our lemma
we have the following due to
Definition~\ref{def:deltaa},
\[
\delta_a(x\hspace{-0,3cm}\mod{q})
\c{A}_a^{-1}
\frac{\phi(q)}{f_a^\dagger(q)}
\prod_{p|x-1, p| q}
\hspace{-0,2cm}
\Big(1-\frac{1}{p}\Big)^{-1}
=
1+\mu\l(\frac{2|\Delta_a|}{\gcd(q,\Delta_a)}\r)\left(\frac{\beta_a(q)}{x}\right)f_a^\ddagger\l(\frac{|\Delta_a|}{\gcd(q,\Delta_a)}\r) 
.\]
The right-hand side is either $\geq 1$ or equal to $1-f_a^\ddagger(|\Delta_a|\gcd(q,\Delta_a)^{-1})$. In the latter case, since the right-hand side must be positive and $f_a^\ddagger(|\Delta_a|\gcd(q,\Delta_a)^{-1})^{-1}$ is an integer, we see that the right-hand side is $\geq 1/2$. Therefore, under the assumptions 
of our lemma we have 
\[
\delta_a(x\hspace{-0,3cm}\mod{q})
\geq
\frac{\c{A}_a}{2}
\frac{f_a^\dagger(q)}{\phi(q)}
\prod_{p|x-1, p| q}
\hspace{-0,2cm}
\Big(1-\frac{1}{p}\Big)
.\]
It is obvious that  
$
\c{A}_a
f_a^\dagger(q)
\gg
\phi(h_a)/h_a
,$
with an implied absolute constant.
This is sufficient for our lemma
owing to 
$
\prod_{p|x-1, p|q}
(1-\frac{1}{p})
\geq 
\phi(q)/q
$.
 \end{proof} 
Recalling~\eqref{def:sigmapenotdelta}
we see that 
\[
\sigma_{\b{a},n}(\Da)
\prod_{i=1}^3\c{L}_{a_i}
=
\Da
\sum_{\substack{b_1,b_2,b_3 \md{\Da}\\ 
b_1+b_2+b_3\equiv n \md{\Da}
}} 
\prod_{i=1}^3 
\delta_{a_i}(b_i\hspace{-0,3cm}\mod{\Da})  
,\]
thus, 
if $\sigma_{\b{a},n}(\Da)>0$
then there exist 
$x_1,x_2,x_3\md{\Da}$
such that 
$\prod_{i=1}^3 
\delta_{a_i}(x_i\hspace{-0,2cm}\mod{\Da})  
>0$
and 
$x_1+x_2+x_3\equiv n \md{\Da}$.
Invoking
Lemma~\ref{lem:nonzero}
we see that 
if $\sigma_{\b{a},n}(\Da)>0$ then
\[
\sigma_{\b{a},n}(\Da)
\prod_{i=1}^3\c{L}_{a_i}
\geq
\Da
\prod_{i=1}^3 
\delta_{a_i}(x_i\hspace{-0,2cm}\mod{\Da})
\gg
\Da^{-2}
\prod_{i=1}^3\frac{\phi(h_{a_i})}{h_{a_i}}
.\]
Recalling~\eqref{def:mytonga}
we obtain
$
\Da
\leq
[\Delta_1,\Delta_2,\Delta_3]
\leq
|
\Delta_{1}
\Delta_{2}
\Delta_{3}
|
$,
hence 
\begin{equation}
\label{eq:probus}
\sigma_{\b{a},n}(\Da)
\prod_{i=1}^3\c{L}_{a_i}
\gg
\prod_{i=1}^3\frac{\phi(h_{a_i})}{|
\Delta_{i}
|^2 h_{a_i}}
,\end{equation}
with an absolute implied constant.    
Combined with 
Lemma~\ref{lem:nmiddeducomeon2},
this concludes the 
proof of~\eqref{th:grhreplaceall}.

\subsection{The proof of Theorem \ref{thm:const}}\label{proofthmrthcoro}
The proof of the first part of Theorem \ref{thm:const},
which is~\eqref{ee:grhtyuiop} 
is spread throughout~\S\ref{proofofee:grhtyuiop}.
The proof of the second (and last) 
part of Theorem \ref{thm:const},
which is~\eqref{th:grhreplaceall},
is spread throughout~\S\ref{proofofth:grhreplaceall}.

\subsection{The proof of Corollary~\ref{cor:loc_glob}}\label{proofcoro}
Obviously, \emph{(1)} implies \emph{(2)}. For the reverse direction,
let $d\in\{3,\Da\}$ and let $p_1,p_2,p_3$ be primes not dividing $2d$,
such that each $a_i$ is a primitive root modulo $p_i$ and
$p_1+p_2+p_3\equiv n\bmod{d}$. Thus, for every $i=1,2,3$ the
progression $p_i\md{d}$ satisfies $\gcd(p_i,d)=1$ and contains an odd
prime having $a_i$ as a primitive root.  We can now use the following
observation due to Lenstra~\cite[p.g.216]{MR0480413}: if $\gcd(x,d)=1$
and $\delta_a(x\hspace{-0,2cm}\mod{d})=0$ then either there is no
prime $p\equiv x \md{d}$ with $\mathbb{F}_p^*= \langle a \rangle$ or
there is one such prime, which must be equal to $2$.  This shows that
we must have $\delta_a(x_i\hspace{-0,2cm}\mod{d})>0$ for every
$i=1,2,3$.  Using the fact that $x_1+x_2+x_3\equiv n \md{d}$, as well
as Definition~\eqref{def:sigmapenotdelta} shows that
$\sigma_{\b{a},n}(\Da)\sigma_{\b{a},n}(3)>0$. By Lemma \ref{lem:nmiddeducomeon2}, we get $\consta(n)>0$, and thus $\consta(n)\gg 1$ by \eqref{th:grhreplaceall}. Thus, \emph{(1)} follows immediately from Theorem \ref{thm:main} and the trivial estimate
\[
\sum_{\substack{p_1+p_2+p_3=n\\
\exists i:\ p_i \mid 6\Delta_{1}\Delta_{2}\Delta_{3}
}}
\Bigg(
\prod_{i=1}^3 \log p_i
\Bigg)
\ll
n
(\log n)^3
.\]

\subsection{The proof of
Theorem~\ref{propositionjoe}}
\label{s:simplifying}
First note that
$\Daaa=|\Delta_a|$.
It is clear that 
for the proof of Theorem~\ref{propositionjoe}
we need
to find equivalent conditions for $n$ to satisfy 
$$
\sigma_{(a,a,a),n}(|\Delta_a|)
\prod_{p\nmid \Delta_a}
\sigma_{(a,a,a),n}(p)
>
0.
$$
By
Lemma~\ref{lem:nmiddeducomeon2}
the condition
$\prod_{p\nmid \Delta_a}
\sigma_{(a,a,a),n}(p)
\neq 0$
is equivalent to  
\begin{equation}
\label{eq:withf58}
\begin{cases}  
n\equiv 3\md{6},&\mbox{if } 3\mid h_a \text{ and } 3\nmid \Delta_a,\\  
n\equiv 1\md{2},&\mbox{otherwise. } \end{cases}
\end{equation}
Hence it remains to find
equivalent conditions for $n$
to satisfy 
$\sigma_{(a,a,a),n}(|\Delta_a|)
>0$. 

\begin{proposition}\label{lem:twofoldpart1}
 Assume that $n$ is an odd positive 
integer.
 \begin{enumerate}
 \item If $3\nmid \gcd(\Delta_a,h_a)$ or $3\mid n$, and if $\Delta_a$ has a prime divisor that is greater than $7$, then $\sigma_{(a,a,a),n}(|\Delta_a|)>0$.
 \item If $3\mid\gcd(\Delta_a,h_a)$ and $3\nmid n$, then $\sigma_{(a,a,a),n}(|\Delta_a|)=0$.
 \end{enumerate}
\end{proposition}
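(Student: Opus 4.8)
The plan is to reduce everything to an explicit study of the density $\sigma_{(a,a,a),n}(|\Delta_a|)$ via the formula $\sigma_{(a,a,a),n}(|\Delta_a|)\prod_i\c{L}_{a_i}=|\Delta_a|\sum_{b_1+b_2+b_3\equiv n}\prod_i\delta_a(b_i\bmod{|\Delta_a|})$ coming from~\eqref{def:sigmapenotdelta} and~\eqref{eq:breakas871b}. Since every factor is $\c{L}_a^{-3}$ times a nonnegative quantity, positivity of $\sigma_{(a,a,a),n}(|\Delta_a|)$ is equivalent to the existence of a triple $(x_1,x_2,x_3)$ modulo $|\Delta_a|$ with $x_1+x_2+x_3\equiv n\bmod{|\Delta_a|}$ and $\delta_a(x_i\bmod{|\Delta_a|})>0$ for each $i$. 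So the whole proposition becomes: for which odd $n$ does the set $B:=\{x\bmod{|\Delta_a|}:\delta_a(x\bmod{|\Delta_a|})>0\}$ satisfy $n\in B+B+B$?

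For part~(2), the hypothesis $3\mid\gcd(\Delta_a,h_a)$ forces us to examine the congruence class modulo $3$. Since $3\mid h_a$, Definition~\ref{def:deltaa} shows that $\delta_a(x\bmod{|\Delta_a|})=0$ unless $\gcd(x-1,|\Delta_a|,h_a)=1$, and since $3\mid\gcd(|\Delta_a|,h_a)$ this requires $3\nmid x-1$, i.e. $x\not\equiv 1\bmod 3$; combined with $\gcd(x,|\Delta_a|)=1$ (hence $x\not\equiv 0\bmod 3$) we get $x\equiv 2\bmod 3$ for every $x\in B$. Therefore any element of $B+B+B$ is $\equiv 6\equiv 0\bmod 3$, so if $3\nmid n$ there is no such triple and $\sigma_{(a,a,a),n}(|\Delta_a|)=0$. (This is essentially the same mechanism already used in Lemma~\ref{lem:nmiddeducomeon2} for the prime $3$.)

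For part~(1), I would argue by a counting/lower-bound argument. Write $|\Delta_a|=3^{\epsilon}m$ with $3\nmid m$ and $m>1$ having a prime factor $>7$ (the hypothesis), and treat the $3$-part and $m$-part separately via the Chinese remainder theorem, noting that $\delta_a$ respects this split only up to the $\c{A}_a$-normalisation, so one works with the weights $\delta_a(\cdot)/\c{L}_a$ which do factor as in the computation preceding~\eqref{def:bombpi}. Modulo $m$ (no prime $3$), the constraints defining membership in the relevant residue set are mild: one needs $\gcd(x,m)=1$, $\gcd(x-1,m,h_a)=1$, and the Kronecker-symbol factor $1+\mu(\cdot)\left(\frac{\beta_a(q)}{x}\right)f_a^\ddagger(\cdot)$ to be nonzero, which by the argument in Lemma~\ref{lem:nonzero} happens for a positive proportion of $x$. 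A routine character-sum / inclusion–exclusion estimate then shows that the number of solutions of $x_1+x_2+x_3\equiv n\bmod m$ with all three $x_i$ in the admissible set is $\gg m^2/(\text{small factors})$, in particular positive, because the admissible set has density bounded below by an absolute constant times $\prod_{p\mid m}(1-O(1/p))$ and $m$ is not too "dense" in small primes once it has a large prime factor. Modulo $3^{\epsilon}$, the hypothesis $3\nmid\gcd(\Delta_a,h_a)$ or $3\mid n$ is exactly what is needed to find a triple summing to $n$: if $3\nmid h_a$ the congruence $\gcd(x-1,3^\epsilon,h_a)=1$ is vacuous and $B$ contains all units mod $3^\epsilon$, so $B+B+B$ is everything; if $3\mid h_a$ but $3\nmid\Delta_a$ then $\epsilon=0$ and there is nothing to check; and if $3\mid h_a$ and $3\mid\Delta_a$ then (as in part~(2)) $B\bmod 3=\{2\}$, so $B+B+B\bmod 3=\{0\}$, which is compatible precisely when $3\mid n$. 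Combining the two local solutions via CRT produces the required global triple, whence $\sigma_{(a,a,a),n}(|\Delta_a|)>0$.

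The main obstacle I expect is the quantitative lower bound on the number of solutions modulo the prime-to-$3$ part $m$: one must control the interaction of three separate "admissibility" conditions (coprimality to $m$, the $\gcd(x-1,m,h_a)=1$ condition, and the quadratic-symbol weight $\beta_a$) simultaneously for all three summands, and verify that the resulting singular-series-type quantity cannot vanish. The cleanest route is probably to drop down to the squarefree radical of $m$, use Lemma~\ref{lem:exceptions}/Lemma~\ref{lem:nonzero} to get $\delta_a(x\bmod m)\gg \phi(h_a)/(mh_a)\cdot\prod_{p\mid x-1,p\mid m}(1-1/p)$ on a set of $x$ of density $\gg\prod_{p\mid m}(1-c/p)$, and then note that having a prime factor $>7$ is exactly the condition ensuring this density beats the threshold $1/|\Delta_a|$ needed for the three-fold sumset to cover all residues — or, failing a uniform bound, one does a short finite case check for the finitely many $|\Delta_a|$ all of whose prime factors are $\le 7$, which is precisely the list of exceptional discriminants enumerated in the table of Theorem~\ref{propositionjoe}.
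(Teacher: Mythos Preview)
Your argument for part~(2) is correct and matches the paper's. The set $B$ of admissible residues is characterised exactly by the three conditions $\gcd(x,\Delta_a)=1$, $\gcd(x-1,\Delta_a,h_a)=1$, and $\left(\frac{\Delta_a}{x}\right)=-1$ (since with $q=|\Delta_a|$ the correction term in Definition~\ref{def:deltaa} reduces to $-\left(\frac{\Delta_a}{x}\right)$), and the first two force $x\equiv 2\bmod 3$ when $3\mid\gcd(\Delta_a,h_a)$.

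Your proposal for part~(1) has a genuine gap. You claim that the weights $\delta_a(\cdot)/\c{L}_a$ factor across the decomposition $|\Delta_a|=3^{\epsilon}m$, and then plan to solve locally and patch by CRT. But the third condition $\left(\frac{\Delta_a}{x}\right)=-1$ is a \emph{global} sign constraint: it says the product of the local symbols $\left(\frac{D_p}{x}\right)$ over all $p\mid\Delta_a$ equals $-1$, so the set $B$ is \emph{not} a product of local sets and a naive CRT argument cannot work. Your density heuristic (``beats the threshold $1/|\Delta_a|$'') is also not the right mechanism in a non-prime modulus, and you correctly flag that you do not have a proof here.

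The paper resolves this with a concrete construction rather than a counting argument. Let $p'>7$ be the largest prime dividing $\Delta_a$. For every \emph{smaller} prime $p\mid\Delta_a$ one first chooses $x_i^{(p)}\bmod D_p$ satisfying only the two coprimality conditions and the congruence $\sum x_i^{(p)}\equiv n\bmod D_p$; this is a purely local problem, handled by Cauchy--Davenport for $p\geq 5$, by the hypothesis for $p=3$, and trivially for $p=2$. One then uses $p'$ as a correction prime: depending on the sign of $\prod_{p<p'}\left(\frac{D_p}{x_i^{(p)}}\right)$, choose $x_i^{(p')}$ from the non-residues $N$ or from the residues $R=\{x:\left(\frac{x}{p'}\right)=1,\ x\not\equiv 1\}$, so that the global product becomes $-1$. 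The crucial point is that for $p'\geq 11$ each of $R+R+R$, $R+R+N$, $R+N+N$, $N+N+N$ covers all of $\Z/p'\Z$ (Cauchy--Davenport for $p'\geq 13$, direct check for $p'=11$), so one can simultaneously hit the target residue $n\bmod p'$ and fix all three global signs. This is exactly where the hypothesis ``$\Delta_a$ has a prime divisor $>7$'' enters: it guarantees that $R$ and $N$ are large enough (at least $(p'-3)/2$ and $(p'-1)/2$) for all four triple sumsets to be surjective.
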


\begin{proof}
  It can be seen directly from Definition~\ref{def:deltaa}
that the quantity $\delta_a(x_i\hspace{-0,2cm}\mod{|\Delta_a|})$
is non-zero if and only if
\begin{equation}\label{eq:x_i_conditions}
\gcd(x_i-1,\Delta_a,h_a)=1,\ \gcd(x_i,\Delta_a)=1 \text{ and } \left(\frac{\Delta_a}{x_i}\right)=-1.
\end{equation}
In view of Definition \ref{def:sigmapenotdelta}, we need to find conditions under which there are $x_1,x_2,x_3\in\ZZ$ with $x_1+x_2+x_3\equiv n\bmod{\Delta_a}$, such that each $x_i$ satisfies \eqref{eq:x_i_conditions}.

To prove \emph{(2)}, we observe that the first two conditions in \eqref{eq:x_i_conditions} imply that $x_i\equiv 2\bmod 3$, hence $3\mid n$.

Let us now prove \emph{(1)}. Write $\Delta_a=\prod_{p\mid \Delta_a}D_p$, where $D_2\in\{-8,-4,8\}$ and $D_p=(-1)^{(p-1)/2}p$ for $p\geq 3$. Let $p'>7$ be the largest prime divisor of $\Delta_a$. For every $p<p'$, we find $x_1^{(p)},x_2^{(p)},x_3^{(p)}\bmod {D_p}$ that solve the congruence $x_1^{(p)}+x_2^{(p)}+x_3^{(p)}\equiv n\bmod{D_p}$ and satisfy $\gcd(x_i^{(p)}-1,\Delta_a,h_a)=\gcd(x_i^{(p)},\Delta_a)=1$. If $p>3$, this is possible for every $n$ by a simple application of the Cauchy--Davenport Theorem. If $p=3$, it is possible precisely by our assumption that then $3\nmid h_a$ or $3\mid n$. Finally, for $p=2$, it is possible since $2\nmid nh_a$.

Let us now define $x^{(p')}_{i}$. Consider the sets

\begin{equation*}
R:=
\Big\{x\in \Z/p'\Z
:\l(\frac{x}{p'}\r)=1,x\neq 1\md{p'}\Big\},\ 
N :=\Big\{x\in \Z/p'\Z
:\l(\frac{x}{p'}\r)=-1\Big\}
.
\end{equation*}
If $\prod_{\substack{p\mid\Delta_a\\p<p'}}\Big(\frac{D_p}{x_i^{(p)}}\Big)=1$, we pick $x_i^{(p')}\in N$, and if $\prod_{\substack{p\mid\Delta_a\\p<p'}}\Big(\frac{D_p}{x_i^{(p)}}\Big)=-1$, we pick $x_i^{(p')}\in R$. We can always do so and achieve $x_1^{(p')}+x_2^{(p')}+x_3^{(p')}\equiv n\bmod{p'}$, as the sets
\begin{equation*}
  R+R+R,\ R+R+N,\ R+N+N,\ N+N+N 
\end{equation*}
cover all of $\ZZ/p'\ZZ$. This follows from a direct computation if $p'=11$ and from the Cauchy--Davenport Theorem if $p'\geq 13$.

To finish our proof of \emph{(1)}, we pick integers $x_i$ that satisfy $x_i\equiv x_i^{(p)}\bmod{D_p}$ for all $p\mid \Delta_a$. Then quadratic reciprocity ensures that
\begin{equation*}
\bigg(\frac{\Delta_a}{x_i}\bigg)=
\Bigg(\frac{x_i^{(p')}}{p'}\Bigg)
\prod_{\substack{p\mid\Delta_a\\p<p'}}
\bigg(\frac{D_p}{x_i^{(p)}}\bigg)=-1
\end{equation*}
for all $i$. Hence, the $x_i$ satisfy \eqref{eq:x_i_conditions}, and moreover $x_1+x_2+x_3\equiv n\bmod{\Delta_a}$.
\end{proof}
\begin{proof}[\textbf{Proof of 
Theorem~\ref{propositionjoe}
}\!\!\!\!
] 
First let us note that the fundamental discriminants with every prime smaller than $11$
are of the form
\[
D_2^{i_1}
(-3)^{i_2}
5^{i_3}
(-7)^{i_4}
,\]
where $D_2$ is an integer in the set $\{-4,8,-8\}$
and every exponent $i_j$ is either $0$ or $1$.
This gives a finite set of values for $\Delta_a$ and 
it is straightforward to use a computer program 
that finds all congruence classes $n\md{\Delta_a}$
such that 
$n\equiv x_1+x_2+x_3\md{\Delta_a}$
for some $\b{x}\in (\Z/\Delta_a\Z)^3$
satisfying all of
the conditions \eqref{eq:x_i_conditions} for $1\leq i \leq 3$.

By Definition~\ref{def:deltaa}
these conditions are equivalent to
$\delta_a(x_i\hspace{-0,2cm}\mod{|\Delta_a|})\neq 0$
and when combined with~\eqref{eq:withf58}
they provide
the congruence classes for $n$
in every row of the table in  
Theorem~\ref{propositionjoe}
apart from 
the last two rows.
For the last two rows, $\Delta_a$ has a prime factor greater than $7$, so one sees by
Proposition~\ref{lem:twofoldpart1}  
that we only have to provide 
conditions on $n$
that are 
equivalent 
to
$\prod_{p\nmid \Delta_a}
\sigma_{(a,a,a),n}(p)
>0$,
which was already 
done in~\eqref{eq:withf58}.
\end{proof} 

\subsection{Non-factorisation of $\consta(n)$}
\label{s:nonfa}
We
finish by showing that the right side  in~\eqref{ee:grhtyuiop}
does not always factorise as an Euler product of a specific form.
Namely, assume that for every non-square integer $a\neq -1$ 
we are given a 
sequence of real numbers $\lambda_a:\Z^2\to [0,\infty)$
such that for every prime $p$ and integers $x,x'$ 
we have  
\begin{equation}
\label{eq:selfcont1}
\delta_a(x\hspace{-0,3cm}\mod{p})>0
\Rightarrow
\lambda_a(x,p)>0
\end{equation}
and 
\[
x\equiv x'\md{p}
\Rightarrow
\lambda_a(x,p)=
\lambda_a(x',p)
.\]
Now, in parallel with~\eqref{def:sigmapenotdelta}
let us define
\[
\varpi_{p,a}(n)
:=
\Bigg(
\sum_{\substack{b_1,b_2,b_3 \md{p}\\ b_1+b_2+b_3\equiv n \md{p}}} 
\prod_{i=1}^3 
\lambda_a(x,p)
\Bigg)
\Bigg(\sum_{\substack{b_1,b_2,b_3 \md{p}\\ b_1+b_2+b_3\equiv n \md{p}}} \hspace{-0,2cm}
\frac{1}{p^3}
\Bigg)^{-1}
.\]
The fact that the quantities $\varpi_{p,a}(n)$
are well-defined follows from the periodicity of $\lambda_a$.

We will see that one cannot have 
the following factorisation
for all odd
integers $n$,
\begin{equation}
\label{eq:selfcont2}
\c{L}_a^3
\sigma_{(a,a,a),n}(|\Delta_a|)
=
\prod_{p\mid \Delta_a}
\varpi_{p,a}(n)
.\end{equation}
Indeed, if
 $a:=(-15)^5=-759375$
then 
 by Definition~\ref{def:deltaa}
we easily see that 
\[
\delta_{-759375}(x\hspace{-0,3cm}\mod{15})>0
\Leftrightarrow
x\md{15} \in \{7,13,14\md{15}\}
,\]
hence 
 for all integers $n\equiv 7\md{15}$
we have
$\sigma_{(a,a,a),n}(|\Delta_a|)=0$ due to~\eqref{def:sigmapenotdelta}
and the fact that for all $\b{x} \in \{7,13,14\}^3$
one has 
$\sum_{i=1}^3
x_i
\neq 7 \md{15}$.
Definition~\ref{def:deltaa}
furthermore 
implies that 
\[
\delta_{-759375}(x\hspace{-0,3cm}\mod{3})>0
\Leftrightarrow
x\md{3} \in \{1,2\md{3}\}
\]
and
\[
\delta_{-759375}(y\hspace{-0,3cm}\mod{5})>0
\Leftrightarrow
y\md{5} \in \{2,3,4\md{5}\}
,\]
therefore
whenever
$n\equiv 7\md{15}$
then the
vectors
$\b{x}=(1,1,2)$
and
$\b{y}=(4,4,4)$
satisfy 
\[
\sum_{i=1}^3 x_i \equiv n \md{3},
\sum_{i=1}^3 y_i \equiv n \md{5}
\text{ and }
\
\prod_{i=1}^3
\delta_{-759375}(x_i\hspace{-0,3cm}\mod{3})
\delta_{-759375}(y_i\hspace{-0,3cm}\mod{5})
>0
.\]
By~\eqref{eq:selfcont1}
this implies that 
$\varpi_{3,-759375}(n)>0,
\varpi_{5,-759375}(n)>0$,
which contradicts~\eqref{eq:selfcont2}
due to
$\sigma_{(a,a,a),n}(|\Delta_a|)=0$.


\begin{thebibliography}{99}

 

\bibitem{MR2312337}
H. Cohen, {\em Number theory. {V}ol. {I}. {T}ools and {D}iophantine
              equations}.
Springer, New York, (2007).

 
\bibitem{MR762358}
R. Gupta and R. Murty,
A remark on {A}rtin's conjecture.
{\em Invent. Math.} {\bf 78} (1984), 127--130.


  
 \bibitem{MR0424730}
H. Halberstam and H.-E. Richert, {\em Sieve methods}.
London Mathematical Society Monographs, 
Academic Press
\, Providence, RI, (1974).






\bibitem{MR1555183}
G. H. Hardy and J. E. Littlewood,
Some problems of `{P}artitio numerorum'; {III}: {O}n the
              expression of a number as a sum of primes.
{\em Acta Math.} {\bf 44} (1923), 1--70.

\bibitem{MR830627}
D. R. Heath-Brown,
Artin's conjecture for primitive roots.
{\em Quart. J. Math. Oxford Ser. (2)} {\bf 37} (1986), 27--38.


\bibitem{helfgott}  
A. H. Helfgott, 
The ternary {G}oldbach problem.
\texttt{arXiv:0903.4503}, (2015).

\bibitem{hooley}
C. Hooley,
On {A}rtin's conjecture.
{\em J. reine angew. Math.} {\bf 225} (1967), 209--220.


\bibitem{MR2061214}
H. Iwaniec and E.  Kowalski, {\em Analytic number theory}.
American Math.\ Soc.\, Providence, RI, (2004).


\bibitem{MR3060874}
D. M. Kane,
An asymptotic for the number of solutions to linear equations
              in prime numbers from specified {C}hebotarev classes.
{\em Int. J. Number Theory} {\bf 9} (2013), 1073--1111.

\bibitem{MR0480413}
H. W. Lenstra,
On {A}rtin's conjecture and {E}uclid's algorithm in global
              fields.
{\em Invent. Math.} {\bf 42} (1977), 201--224.

\bibitem{entanglem}
H. W. Lenstra, P. Stevenhagen and P. Moree,
Character sums for primitive root densities.
{\em Math. Proc. Cambridge Philos. Soc.} {\bf 157} (2014), 489--511.


\bibitem{moreeprog}
P. Moree,
On primes in arithmetic progression having a prescribed
              primitive root. {II}.
{\em Funct. Approx. Comment. Math.} {\bf 39} (2008), 133--144.


\bibitem{moreesurvey} 
P. Moree,
Artin's primitive root conjecture--a survey.
{\em Integers} {\bf 12} (2012), 1305--1416.


 
\bibitem{sereos}
J.-P. Serre,
R\'esum\'e des cours de 1977-1978.
{\em Annuaire du Coll\`ege de France} (1978), 67--70.

 

  


\bibitem{MR3165421}
X. Shao,
A density version of the {V}inogradov three primes theorem.
{\em Duke Math. J.} {\bf 163} (2014), 489--512.

\bibitem{MR2019022}
P. Stevenhagen,
The correction factor in {A}rtin's primitive root conjecture.
{\em J. Th\'eor. Nombres Bordeaux} {\bf 15} (2003), 383--391.

  

\bibitem{zbMATH03026053}
I.M. Vinogradov,
Representation of an odd number as a sum of three primes.
{\em C. R. (Dokl.) Acad. Sci. URSS, n. Ser.} {\bf 15} (1937), 169--172.


 

\end{thebibliography}
\end{document}